	\crefname{equation}{equation}{equations}
\theoremstyle{plain}
	\newtheorem{thm}{Theorem}
	\newtheorem{prop}[thm]{Proposition}
	\newtheorem{cor}[thm]{Corollary} 
	\newtheorem{lemma}[thm]{Lemma} 
	\numberwithin{thm}{section}
	\newtheorem*{thm*}{Theorem}
\theoremstyle{definition}
	\newtheorem{defn}[thm]{Definition} 
\theoremstyle{remark}
	\newtheorem{remark}[thm]{Remark} 
\DeclareMathOperator{\Aut}{Aut}
\DeclareMathOperator{\expl}{Expl}
\newcommand{\an}{{\mathbf a}}
\newcommand{\E}{\mathfrak{E}}
\newcommand{\ex}{\mathbf}
\newcommand{\N}{\mathfrak N}
\newcommand{\Prop}{P}
\newcommand{\rh}{{}^{r}\!H}
\newcommand{\tc}[1]{\check\rvert_{#1}}
\newcommand{\T}{\mathfrak{T}}
\newcommand{\Tlarge}{\mathfrak t_b}
\newcommand{\Tsmall}{\mathfrak t_s}
\newcommand{\W}{\mathcal W}
\providecommand{\abs}[1]{\left\lvert #1\right\rvert}
\providecommand{\lrb}[1]{\ensuremath{\left(#1\right)}}
\providecommand{\op}[2]{{}_{#2}#1}
\providecommand{\totb}[1]{\ensuremath{\underline{ #1}}}
\providecommand{\totl}[1]{\ensuremath{\lceil #1\rceil }}
\providecommand{\ip}[2]{\int #1\wedge #2}
\providecommand{\pp}[2]{\left\llangle #1, #2\right\rrangle }
\tikzset{box style/.style={red!10}}
\tikzset{toric style/.style={color=cyan!90, fill=cyan!10, very thick}}
\tikzset{wall style/.style={black, line width = 1mm}}
\tikzset{curve style/.style={red, very thick}}
\tikzset{wallcurve style/.style={red, very thick, decorate, decoration={snake, segment length=2.5mm, amplitude=0.25mm, post length=0mm, pre length=0mm}}}
\title{The tropological vertex} 
\author{Norman Do \and Brett Parker}
\begin{document}

\textbf{\Large \thetitle}

\textbf{\theauthor}

School of Mathematics, Monash University, VIC 3800 Australia \\
Email: \href{mailto:norm.do@monash.edu}{norm.do@monash.edu}

Mathematical Sciences Institute, The Australian National University, ACT 2601 Australia \\
Email: \href{mailto:brett.parker@anu.edu.au}{brett.parker@anu.edu.au}

{\em Abstract.} The theory of the topological vertex was originally proposed by Aganagic, Klemm, Mari\~{n}o and Vafa as a means to calculate open Gromov--Witten invariants of toric Calabi--Yau threefolds. In this paper, we place the topological vertex within the context of relative Gromov--Witten invariants of log Calabi--Yau manifolds and describe how these invariants can be effectively computed via a gluing formula for the enumeration of tropical curves in a singular integral affine space. This richer context allows us to prove that the topological vertex possesses certain tropical symmetries. These symmetries are captured by the action of a quantum torus Lie algebra that is related to a quantisation of the Lie algebra of the tropical vertex group of Gross, Pandharipande and Siebert. Finally, we demonstrate how this algebra of symmetries leads to an explicit description of the topological vertex and related Gromov--Witten invariants.

\emph{Keywords.} Gromov--Witten invariants, topological vertex, tropical geometry

\emph{2020 Mathematics Subject Classification.} 14N35, 53D45

\emph{Acknowledgements.} The first author was supported by the Australian Research Council grant DP180103891. Both authors thank the mathematical research institute MATRIX in Australia where part of this research was performed. Both authors thank Jean-Emile Bourgine for directing us to the references~\cite{AFS,Sdual}. 

\vspace{12pt} \hrule \vspace{12pt}

\setcounter{tocdepth}{1}
\tableofcontents

\vspace{12pt} \hrule

\section{Introduction}

Aganagic, Klemm, Mari\~{n}o and Vafa introduced the theory of the topological vertex for the effective computation of open Gromov--Witten invariants, motivated by the duality between Chern--Simons theory and Gromov--Witten theory~\cite{AKMV,gop-vaf99}. Li, Liu, Liu and Zhou provided a rigorous mathematical construction of the topological vertex in terms of formal relative Gromov--Witten invariants~\cite{LLLZ}. We will relate this construction to Gromov--Witten invariants of three-dimensional log Calabi--Yau manifolds, which are computable using a calculus of tropical curves. This perspective provides an algebra of tropical symmetries of the topological vertex; hence, `the tropological vertex'.

The aforementioned tropical symmetries arise as a representation of a quantum torus Lie algebra, which is introduced in \cref{sec:quantumtorus} and comes in two forms. The small quantum torus Lie algebra $\Tsmall$ has generators $T_{(a,b)}$ for $(a,b)\in\mathbb Z^2 \setminus \{(0,0)\}$ that satisfy the commutation relations
\[
[T_{(a,b)},T_{(c,d)}]=[ad-bc]_q \, T_{(a+c,b+d)} \ ,
\] 
where we use the slightly unconventional notation $[n]_q := -i (q^{n/2}-q^{-n/2})=2\sin(n \hbar/2)$ and the relation $q^{1/2} = e^{i\hbar/2}$. This small quantum torus Lie algebra is associated to a real 2-dimensional torus with Lie algebra $\mathfrak t^2$ so that the $\mathbb Z^2$ used to index the generators corresponds naturally to the integral lattice $\mathbb Z^2=\mathfrak t^2_{\mathbb Z}\subset\mathfrak t^2$. The big quantum torus Lie algebra $\Tlarge$ is a central extension of $\Tsmall$ by $\mathfrak t^2$, with generators that satisfy the commutation relations
\[
[T_{(a,b)},T_{(c,d)}]=[ad-bc]_q \, T_{(a+c,b+d)} + \delta_{(a+c,b+d),(0,0)}(a,b) \ .
\]

Our quantum torus Lie algebra can be regarded as a quantisation of the Lie algebra of the tropical vertex group of Gross, Pandharipande and Siebert~\cite{tropicalvertex}, and also appears as the Lie algebra of the quantum tropical vertex group of Bousseau~\cite{qtropicalvertex}.

The main output of our analysis is the following, which appears subsequently with proof as \cref{tvs}.

\begin{thm*}
For any $v\in\mathfrak t^2_{\mathbb Z}$, we have
\[
\left( \W_{v,1^-} + \W_{v,2^-} + \W_{v,3^-} \right) \T = 0 \ .
\]
\end{thm*}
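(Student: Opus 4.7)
The plan is to interpret each term $\mathcal{W}_{v,i^-}\mathfrak{T}$ as a generating series of tropical curve counts in the singular integral affine space underlying the topological vertex, where the operator $\mathcal{W}_{v,i^-}$ has the effect of attaching an additional external edge of direction $v$ along the $i$-th leg (for $i=1,2,3$) of the vertex. Once the operators are unpacked in this way, the claimed vanishing becomes an assertion that, when we sum over the three possible legs on which to attach the extra edge, all contributions cancel. The strategy is then a deformation/continuity argument: push the extra edge from an interior position out to infinity and observe that it can exit along any of the three legs, producing the three summands, whose sum records the zero net change.

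First I would fix the tropical/combinatorial setup, recalling the gluing formula that presents $\mathfrak{T}$ as a sum over rigid tropical curves weighted by quantum multiplicities involving the factors $[n]_q$. Second, I would identify $\mathcal{W}_{v,i^-}\mathfrak{T}$ with the enumeration of tropical curves obtained from those contributing to $\mathfrak{T}$ by inserting one additional unbounded edge of direction $v$ on leg $i^-$; the bracket relations in $\Tlarge$ guarantee that the quantum weights attached to this edge are exactly the ones dictated by crossings with the existing edges, with the central term from the big quantum torus absorbing the diagonal contribution. Third, I would introduce a one-parameter family that slides the new edge across the affine base, so that as the parameter varies from one asymptotic region to another the tropical count changes only by wall-crossing contributions, and these wall-crossing contributions organise into a telescoping sum that expresses $\mathcal{W}_{v,i^-}\mathfrak{T}$ in terms of an interior insertion independent of $i$.

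The cancellation then follows from balancing: any rigid tropical curve in the support of $\mathfrak{T}$ satisfies a momentum/balancing condition stating that the sum of its outgoing directions along the three legs is zero, and translating the sliding-edge argument through this balancing produces the identity $\sum_{i=1}^3 \mathcal{W}_{v,i^-}\mathfrak{T}=0$. An equivalent formulation is that the insertion of an extra edge interior to the vertex, which has a unique value independent of exit leg by tropical invariance, appears with total coefficient zero when resolved as a sum of external-leg contributions on all three sides.

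The main obstacle I anticipate is the bookkeeping of quantum multiplicities during the wall-crossing step: each time the sliding edge crosses another edge of a contributing tropical curve, the weight changes according to a bracket $[ad-bc]_q$ dictated by the $\Tsmall$ relation, and one must verify that these contributions recombine globally into an identity rather than leaving residual terms. Handling the central $\delta_{(a+c,b+d),(0,0)}(a,b)$ correction from $\Tlarge$ at the precise moments when the sliding edge becomes parallel to an existing one is the most delicate point, and is where the passage from $\Tsmall$ to $\Tlarge$ must be invoked to close the argument cleanly.
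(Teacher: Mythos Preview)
Your proposal is in the right spirit but misidentifies the mechanism, and as written it is not a proof.

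The paper's argument does use tropical curves and a deformation of constraints, but not via a continuous sliding/wall-crossing bookkeeping as you describe. The actual object being computed is a single Gromov--Witten invariant $Z_{\bar X,h_{-v}}$ of the compactified $\mathbb C^3$, where $h_{-v}$ is a cohomology class constraining one extra contact point of type $-v$. This invariant is independent of the differential-form representative of $h_{-v}$, so its value does not change when one moves the support of the constraint. The paper simply picks two discrete positions for this constraint: on one side of the vertex the only contributing tropical curves give $\W_{v,2^-}\T$, while on the other side they give $-\W_{v,1^-}\T-\W_{v,3^-}\T$. Equating the two computations yields the relation directly. There is no telescoping sum of wall-crossing terms to manage; the invariance is free, coming from the definition of a Gromov--Witten invariant. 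Your proposal to track quantum bracket corrections $[ad-bc]_q$ at every crossing would amount to reproving the commutation relations of \cref{Wcommutation} inside this argument, and the central-term bookkeeping you worry about never arises.

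A second genuine gap is the reduction step. The geometric argument above only works cleanly for $v$ in an open cone (in the paper, vectors with $w_1\wedge v>0$, $w_3\wedge v>0$, $w_2\wedge v<0$, and cyclic permutations thereof). These vectors generate $\mathfrak t^2_{\mathbb Z}$ under addition, and one then uses the already-established commutation relations $[\W_{v,\ell},\W_{w,\ell}]=[v\wedge w]_q\,\W_{v+w,\ell}+\delta_{v+w}(v\wedge n_\ell)$ to deduce the identity for all $v$. Your proposal does not isolate a generating set or explain how to get from special $v$ to general $v$. Finally, the ``balancing condition'' you invoke is not the reason for the vanishing: the condition $n_{1^-}+n_{2^-}+n_{3^-}=0$ only ensures that the sum $\W_{v,1^-}+\W_{v,2^-}+\W_{v,3^-}$ defines a representation of the small algebra $\Tsmall$ rather than $\Tlarge$; it does not by itself force the action on $\T$ to be trivial.
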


One can think of $\T$ as a partition function for the topological vertex, storing certain Gromov--Witten invariants of $\mathbb{C}^3$, whose toric graph comprises three legs adjacent to a single vertex. The operators $\W_{v,\ell}$ encode Gromov--Witten invariants that virtually count holomorphic curves with an extra constraint corresponding to $v \in \mathfrak t^2_{\mathbb Z}$ on the leg $\ell$ of the toric graph. Using a calculus of tropical curves, we prove that the map
\[
T_v \longmapsto \W_{v,1^-} + \W_{v,2^-} + \W_{v,3^-}
\]
produces a representation of the small quantum torus Lie algebra $\Tsmall$. Thus, the result above demonstrates that tropical symmetries of the topological vertex are captured by the action of a quantum torus Lie algebra. A connection between the closely related quantum $\mathcal{W}_{1+\infty}$ algebra and the refined topological vertex was shown in the algebraic approach of Awata, Feigin and Shiraishi~\cite{AFS}, and a similar symmetry of the topological vertex was used by Sasa,  Watanabe, and Matsuo in \cite{Sdual}.

The relevant definitions and full details required for the statement and proof of the theorem above appear in the remainder of the paper.

The geometric setup for our analysis and results appears in \cref{cytotoric,cdtc,es}.
\begin{itemize}
\item In \cref{cytotoric}, we discuss how to construct log Calabi--Yau manifolds from toric Calabi--Yau manifolds and relate the topological vertex to Gromov--Witten invariants of these log Calabi--Yau manifolds. The Lie algebra $\mathfrak t^2$ described above makes its first appearance here as an algebra of symmetries for such manifolds.
\item In \cref{cdtc}, we describe how holomorphic curves in these log Calabi--Yau manifolds can be studied via tropical curves in a singular integral affine space. 
\item In \cref{es}, we introduce the spaces used to encode Gromov--Witten invariants of these log Calabi--Yau manifolds. In this three-dimensional Calabi--Yau setting, it turns out that these evaluation spaces have a holomorphic symplectic structure and the image of the moduli space of holomorphic curves is holomorphic Lagrangian.
\end{itemize}

The algebraic setup for our analysis and results appears in \cref{rgw,nr,ss,css}.
\begin{itemize}
\item In \cref{rgw,nr}, we explain how to encode Gromov--Witten invariants using cohomology classes and package them in suitable generating functions. We then introduce a Novikov ring $\N$ in which such generating functions naturally reside. This Novikov ring  allows us to keep track of the symplectic area and Euler characteristic of the curves that we enumerate.
\item In \cref{ss,css}, we package constraints for the Gromov--Witten invariants of interest into an algebra $\mathcal H$, which plays the role of a bosonic Fock space of quantum states. We encode the full Gromov--Witten theory in a partition function $\exp\eta\in\mathcal H$ that can be determined tropically, but contains more information than the topological vertex. The topological vertex partition function $Z_{\bar X}$ is obtained by projecting $\mathcal H$ to a subalgebra $\mathcal H^\pm$, corresponding to counting holomorphic curves with particular constraints. This constrained state space $\mathcal H^\pm$ is naturally a tensor product of algebras $\mathcal H^\pm_\ell$ over the legs $\ell$ of the toric graph associated to our toric Calabi--Yau manifold.
\end{itemize}

The analysis that leads to a proof of our main result appears in the remaining \cref{gfs,sec:product,toriccase,sec:woperators,sec:quantumtorus,sec:framing,sec:tv}.
\begin{itemize}
\item In \cref{gfs}, we reformulate the gluing formula for topological vertex partition functions in the language of this paper. This formula can be derived from the tropical gluing formula appearing in previous work of the second author~\cite[Equation~(1)]{gfgw}. We briefly present this tropical gluing formula, which is also required for subsequent arguments in the paper.

\item In \cref{sec:product,toriccase}, we calculate Gromov--Witten invariants in the simplest cases by using the tropical correspondence formula for three-dimensional toric manifolds as a key input~\cite{3d}. This allows us to give a complete analysis of the cases $X = \mathbb C\times (\mathbb C^*)^2$ and $X = \mathbb C^2 \times \mathbb C^*$, which correspond to the empty tropical graph and the tropical graph with no vertices, respectively.

\item In \cref{sec:woperators,sec:quantumtorus}, we construct operators $\W_{v,\ell}$ on $\mathcal H_\ell^+$ that encode Gromov--Witten invariants counting holomorphic curves with an extra constraint corresponding to $v\in \mathfrak t^2_{\mathbb Z}\setminus \{0\}$. \cref{Wcommutation} states that these operators provide a representation of the big quantum torus Lie algebra $\Tlarge$, corresponding to a projective representation of $\Tsmall$. The proof involves the calculus of tropical curves, using diagrams such as that pictured in \cref{f0}.
We then prove that this is a highest weight representation with weights calculated in \cref{W01,W0}.  In \cref{translation} we observe that, after tensoring with $\mathbb C$, our representation agrees with one arising in the Gromov--Witten/Hurwitz correspondence of Okounkov and Pandharipande~\cite{OP1}.
\begin{figure}[ht!]
\centering
\begin{tikzpicture}
\begin{scope}
\fill [box style] (0,0) rectangle (4,2);
\draw[wall style] (0,1) -- (4,1);
\draw[curve style] (1.5,1) -- (0.5,0);
\draw[curve style] (2,0) -- (2,1);
\draw[wallcurve style] (0,1) -- (4,1);
\end{scope}
\node at (4.5,1) {$=$};
\begin{scope}[xshift=5cm]
\fill [box style] (0,0) rectangle (4,2);
\draw[wall style] (0,1) -- (4,1);
\draw[curve style] (2.5,1) -- (1.5,0);
\draw[curve style] (2,0) -- (2,1);
\draw[wallcurve style] (0,1) -- (4,1);
\end{scope}
\node at (9.5,1) {$+$};
\begin{scope}[xshift=10cm]
\fill [box style] (0,0) rectangle (4,2);
\draw[wall style] (0,1) -- (4,1);
\draw[curve style] (2,0.5) -- (1.5,0);
\draw[curve style] (2,0) -- (2,0.5);
\draw[curve style] (2.25,1) -- (2,0.5);
\draw[wallcurve style] (0,1) -- (4,1);
\end{scope}
\end{tikzpicture}
\caption{ $\W_{v,\ell}\W_{w,\ell}=\W_{w,\ell}\W_{v,\ell}+\left[{v\wedge w}\right]_q \, \W_{v+w,\ell}$}
\label{f0}
\end{figure}

\item In \cref{sec:framing}, we analyse the behaviour of the operators $\W_{v,\ell}$ under a change of framing. Recall that a choice of framing is required for each leg $\ell$ in order to define the topological vertex. We prove that the structure of $\mathcal H^+_\ell$ as a highest weight representation does not depend on the framing, so the framing change isomorphism $F_{\bar X}$ is an isomorphism of representations. Again, the proof involves the calculus of tropical curves, using diagrams such as that pictured in \cref{f01}.
\begin{figure}[ht!]
\centering
\begin{tikzpicture}
\begin{scope}
\fill [box style] (0,0) rectangle (4,2);
\draw[wall style] (0,1) -- (2,1) -- (3,0);
\draw[curve style] (2,0) -- (2.3333,0.6666);
\draw[wallcurve style] (0,1) -- (2,1) -- (3,0);
\end{scope}
\node at (4.5,1) {$=$};
\begin{scope}[xshift=5cm]
\fill [box style] (0,0) rectangle (4,2);
\draw[wall style] (0,1) -- (2,1) -- (3,0);
\draw[curve style] (0.5,0) -- (1,1);
\draw[wallcurve style] (0,1) -- (2,1) -- (3,0);
\end{scope}
\end{tikzpicture}
\caption{$F_{\bar X}\circ\W_{v,\ell_1}=\W_{v,\ell_2^-}\circ F_{\bar X}$.}
\label{f01}
\end{figure}
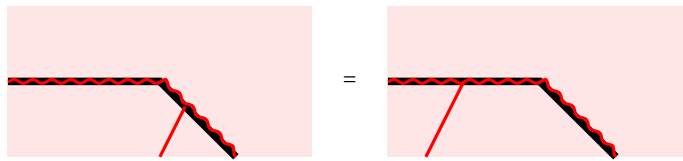

\item In \cref{sec:tv}, we consider the topological vertex partition function, corresponding to the toric Calabi--Yau manifold $X=\mathbb C^3$, whose toric graph comprises three legs adjacent to a single vertex. The partition function $Z_{\bar X}$ is an element of the algebra $\mathcal H^+_{1^-}\otimes\mathcal H^+_{2^-}\otimes\mathcal H^+_{3^-}$, which has a representation of the small quantum torus Lie algebra $\Tsmall$ induced from the representations of $\Tlarge$ on the three tensor factors. We state and prove our main results --- \cref{tvs} and \cref{Zsym} --- which assert that $Z_{\bar X}$ is invariant under this action, thereby giving rise to tropical symmetries of the topological vertex. Once again, the proof involves the calculus of tropical curves, using diagrams such as that pictured in \cref{f02}. We conclude the paper by showing how these symmetries allow one to effectively calculate the topological vertex in \cref{1leg,2leg,3leg} and \cref{tvcalc}.

\begin{figure}[ht!]
\centering
\begin{tikzpicture}
\begin{scope}
\fill [box style] (0,0) rectangle (4,2);
\draw[wall style] (1,2) -- (2,1) -- (4,1);
\draw[wall style] (2,0) -- (2,1);
\draw[curve style] (1,0) -- (2,0.5);
\draw[wallcurve style] (1,2) -- (2,1) -- (4,1);
\draw[wallcurve style] (2,0) -- (2,1);
\end{scope}
\node at (4.5,1) {$+$};
\begin{scope}[xshift=5cm]
\fill [box style] (0,0) rectangle (4,2);
\draw[wall style] (1,2) -- (2,1) -- (4,1);
\draw[wall style] (2,0) -- (2,1);
\draw[curve style] (1,0) -- (3,1);
\draw[wallcurve style] (1,2) -- (2,1) -- (4,1);
\draw[wallcurve style] (2,0) -- (2,1);
\end{scope}
\node at (9.5,1) {$=$};
\begin{scope}[xshift=10cm]
\fill [box style] (0,0) rectangle (4,2);
\draw[wall style] (1,2) -- (2,1) -- (4,1);
\draw[wall style] (2,0) -- (2,1);
\draw[curve style] (0,1) -- (1.3333,1.6666);
\draw[wallcurve style] (1,2) -- (2,1) -- (4,1);
\draw[wallcurve style] (2,0) -- (2,1);
\end{scope}
\end{tikzpicture}
\caption{Two ways of calculating a Gromov--Witten invariant to derive tropical symmetries of the topological vertex.}
\label{f02}
\end{figure}
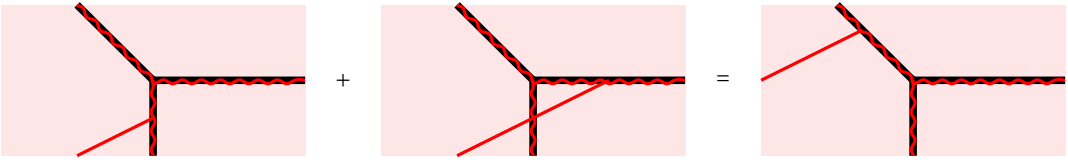
\end{itemize}

Our general approach relies on the theory of exploded manifolds developed by the second author in a series of previous works~\cite{iec,3d,gfgw,dre,vfc,scgp}. However, our main results only depend largely on two main outputs of that program --- the tropical gluing formula and the three-dimensional tropical correspondence formula.


\section{From toric Calabi--Yau to log Calabi--Yau} \label{cytotoric}

A toric Calabi--Yau threefold $X$ is a (non-compact) toric manifold with $\dim_{\mathbb C}X=3$ and a holomorphic volume form $\Omega$. In toric coordinates,
\[
\Omega_X = f \frac{\mathrm{d}z_1}{2\pi i z_1}\wedge \frac{\mathrm{d}z_2}{2\pi iz_2}\wedge \frac{\mathrm{d}z_3}{2\pi iz_3} \ ,
\]
where $f:X\longrightarrow \mathbb C$ is a globally defined holomorphic function, whose vanishing set is the toric boundary divisor of $X$. So on the interior of $X$, $\Omega_X$ is $f$ multiplied by the unique $(\mathbb C^*)^3$--invariant holomorphic $3$-form with integral equal to 1 on each each torus fibre. Assume that the toric fan\footnote{The toric fan of $X$ naturally embeds in the Lie algebra of $(\mathbb R^*)^3\subset (\mathbb C^*)^3$. Each stratum $N$ of the boundary divisor corresponds to the cone comprised of vectors $v$ in this Lie algebra such that the flow of $v$ fixes $N$, and flows generic points towards $N$. Identify the Lie algebra of $(\mathbb R^*)^3\subset(\mathbb C^*)^3$ with $i$ times the Lie algebra $\mathfrak t^3$ of $\mathbb T^3\subset(\mathbb C^*)^3$, so there is a natural notion of integral vectors. Each codimension $1$ component of the toric boundary of $X$ is the positive span of a primitive integral vector $v$. The monomial $f$ determines a $\mathbb C$--linear function $w_f$ on the Lie algebra $\mathfrak g$ of the algebraic torus $(\mathbb C^*)^3$ acting on $X$ such that the derivative of $f$ under the flow of $v\in \mathfrak g$ is $w_f(v)f$. The condition that the zero set of $f$ is the toric boundary divisor of $X$ is equivalent to $w_f(v)=-1$. } of $X$ is convex and that $f$ is a primitive monomial in the toric coordinates.\footnote{The condition that the toric fan of $X$ is convex implies that $f$ must be a toric monomial times the exponential of a global holomorphic function, so we can always deform $\Omega_X$ until $f$ is a toric monomial.} Let $\bar X$ be a smooth toric compactification of $X$ such that $f$ extends to a meromorphic function on $\bar X$, and vanishes only on the toric boundary strata of $X\subset \bar X$.\footnote{ We can restate the conditions on $\bar X$ in terms of toric fans. The condition that $f$ extends to a meromorphic function is that the toric fan of $\bar X$ includes cones whose union is the kernel of $w_f$, and the condition that $f$ not vanish on any of the new boundary components of $\bar X$ is that $w_f\geq 0$ on the rays corresponding to the new codimension $1$ toric boundary components of $X$. This is achievable because the toric fan of $X$ is convex.}
The form
\[
\Omega_{\bar X}:=\frac {\Omega_X}{1-f}
\]
extends to a meromorphic volume form on $\bar X$ with poles at the simple normal crossing divisor
\[
D := f^{-1}(1) \cup (\bar X \setminus X)
\]
and $(\bar X,D)$ is a log Calabi--Yau manifold with logarithmic holomorphic volume form $\Omega_{\bar X}$. We will study Gromov--Witten invariants of $\bar X$ relative to the simple normal crossing divisor $D$ and relate these to the topological vertex of Aganagic, Klemm, Mari\~{n}o and Vafa~\cite{AKMV}.

\subsection{Toric graphs}

Let $\mathbb T^2$ be the sub-torus preserving $f$, and let $\mathfrak t^2$ be the corresponding Lie algebra. This has a canonical integral lattice, $\mathfrak t^2_{\mathbb Z}\subset \mathfrak t^2$ comprised of the elements whose time 1 flow is the identity. Choose a $\mathbb T^2$--invariant symplectic form $\omega$ on $\bar X$ defining a K\"ahler structure.\footnote{This K\"ahler form $\omega$ is a smooth form on $\bar X$, and hence degenerate as a logarithmic 2-form. As such, it cannot be chosen compatible with the logarithmic holomorphic volume form $\Omega_{\bar{X}}$ to reduce the structure group to $SU(3)$; for example, it is impossible for $\Omega_{\bar X}\wedge\bar{\Omega}_{\bar X}$ to be proportional to $\omega\wedge\omega\wedge\omega$. } This can be a toric symplectic form, however the following discussion does not require that $\omega$ be $\mathbb T^3$ invariant.

The moment map of the $\mathbb T^2$-action is a map
\[
\pi: \bar X\longrightarrow (\mathfrak t^2)^*=\mathbb R^2\ ,
\]
defined up to translation by the condition that for $v\in \mathfrak t^2$ and $\widetilde v$ the corresponding vector field on $X$, $-i_{\widetilde v}\omega= \pi^*dv$, where we consider $v$ as giving a linear function on $(\mathfrak t^2)^*$. Within $X \subset \bar X$, the $\mathbb T^2$-action is free away from toric boundary strata of codimension at least $2$; the image of these strata is a graph called the {\bf toric graph} of $X$. The edges of this toric graph travel in integral directions: if the flow of $v\in \mathfrak t^2$ is constant on the stratum over an edge, then the linear function $v$ on $(\mathfrak t^2)^*$ is constant on that edge. For an edge $e$, a choice of primitive integral generator $v_e\in\mathfrak t^2_{\mathbb Z}$ for the sub-torus preserving this stratum then corresponds to a choice of coorientation of $e$. Call such a $v_e$ a {\bf normal vector} to the edge $e$. Vertices of this toric graph are trivalent, and with a cyclic choice of coorientation of the edges leaving a vertex, the corresponding normal vectors sum to $0$. Moreover, we can always choose $\mathbb Z$-affine coordinates $(x,y)$ centred on a vertex such that these cyclically oriented normal vectors are $(0,1)$, $(-1,0)$, and $(1,-1)$.

The edges of a toric graph can also end at the boundary of $\pi(\bar X)$, which is a compact, convex polytope. Such an end is called a {\bf leg}. In $\bar X$, this leg $\ell$ corresponds to a $0$-dimensional stratum of $\bar X$, the intersection of two codimension $1$ strata of $X$, and one extra codimension $1$ stratum $\bar X_\ell$ of $\bar X$. Our conditions on $\bar X$ ensure that this extra stratum is necessarily fixed by the flow of some primitive $w_\ell\in \mathfrak t^2_{\mathbb Z}$ such that the corresponding boundary face of the polytope $\pi(\bar X)$ is where the linear function $w_\ell$ on $(\mathfrak t^2)^*$ is constant and achieves its maximum. Moreover, $w_\ell$ and the normal vector to the edge form a basis for the lattice $\mathfrak t^2_{\mathbb Z}$, so there exist $\mathbb Z$-affine coordinates $(x,y)$ such that the edge is a positive ray in the $x$ direction, and $w_\ell=x$. Different choices of compactification $\bar X\supset X$ lead to different $w_\ell$, and a choice of such $w_\ell$ is called a $\bf{framing}$. See \cref{fig:toricgraph} for an example of a toric graph.

\begin{figure}[ht!]
\centering
\begin{tikzpicture}
\begin{scope}
\filldraw[toric style] (0,0) rectangle (4,2);
\draw[wall style] (0.3,0.5) -- (0.3,1.5) -- (2.5,1.5) -- (2.5,0.5) -- cycle;
\draw[wall style] (0,0.2) -- (0.3,0.5);
\draw[wall style] (0,1.8) -- (0.3,1.5);
\draw[wall style] (3,0) -- (2.5,0.5);
\draw[wall style] (3,2) -- (2.5,1.5);
\end{scope}
\end{tikzpicture}
\caption{A toric graph with four legs and framing vectors $(-1,0)$, $(-1,0)$, $(0,1)$, $(0,-1)$.}
\label{fig:toricgraph}
\end{figure}
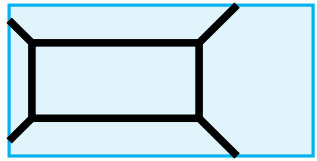

This stratum $\bar X_\ell \subset \bar X$ where the leg $\ell$ ends naturally has the structure of a $2$-dimensional log Calabi--Yau manifold. The divisor $D_\ell \subset \bar X_\ell$ is the intersection of $\bar X_\ell$ with the strata of $\bar X$ that don't contain $\bar X_\ell$. To obtain the holomorphic volume form $\Omega_{\bar X_\ell}$ on $\bar X_\ell$, denote by $\iota_{w_\ell}\Omega_{\bar X}$ the holomorphic $2$--form on $\bar X$ obtained by inserting the vector field generating the flow of $w_\ell$; so if $z$ is a primitive monomial vanishing on $\bar X_\ell \subset\bar X$, then $\Omega_{\bar X}=-\frac{\mathrm{d}z}{2\pi i z}\wedge \iota_{w_\ell}\Omega_{\bar X}$. Then $\Omega_{\bar X_\ell}$ is the restriction of $\iota_{w_\ell}\Omega_{\bar X}$ to $\bar X_\ell \subset \bar X$. We will also think of $\Omega_{\bar X_\ell}$ as a holomorphic symplectic form on $(\bar X_\ell, D_\ell)$.

\subsection{Lagrangian fibrations}

The holomorphic map 
\[
f:\bar X\longrightarrow \mathbb{CP}^1
\]
is a submersion away from $0$ and $\infty$, and the nondegenerate fibres are K\"ahler, with structure preserved by the $\mathbb T^2$-action. Parallel transport orthogonal to fibres defines a symplectic connection on this bundle. This connection preserves the symplectic structure, and also the $\mathbb T^2$-action and its moment map $\pi$. We can use this connection to define various Lagrangians in $X$ and $\bar X$. Because the moment map $\pi$ is preserved, parallel transport of a $\mathbb T^2$-orbit around any smooth closed curve $\gamma$ in $\mathbb{CP}^1\setminus\{0,\infty \}$ closes up to give a Lagrangian torus in $\bar X$. Moreover, this Lagrangian is $f^{-1}(\gamma)\cap\pi^{-1}(p)$ for some point $p\in(\mathfrak t^2)^*$. For example, the toric $\mathbb T^3$-orbits are the parallel transport of $\mathbb T^2$-orbits around the loops where $\abs f$ is constant. Parallel transport around the loops where $\abs {f-1}$ is constant defines a nice singular Lagrangian torus fibration on $\bar X$ such that the behaviour of this fibration around the divisor $D\subset\bar X$ is modelled on  the behaviour of a toric fibration near the toric boundary divisor. Moreover, $\Omega_{\bar{X}}$ is purely real on these Lagrangian fibres, so this is a special Lagrangian fibration. The geometric significance of this is that, if $\omega$ defined a K\"ahler metric in which $\Omega_{\bar X}$ was covariantly constant, then these special Lagrangian fibres would have minimal volume, because their volume coincides with the integral of the closed form $\Omega_{\bar X}$. In $f^{-1}(0)$, there are singularities of this fibration where $\mathrm{d}f=0$, but as $\mathrm{d}f \neq 0$ away from codimension 2 strata of $X$, the fibration is smooth away from such strata. So we have singular Lagrangian torus fibration 
\[
(\ln\abs{f-1},\pi):\bar X\longrightarrow [-\infty,\infty]\times (\mathfrak t^2)^* \ ,
\]
which is smooth away from the embedding of the toric graph in $\{0\}\times (\mathfrak t^2)^*$, which we will call the {\bf singular locus}. We will also refer to the inverse image of this graph in $\bar X$ as the singular locus --- this inverse image in $\bar X$ is actually a union of holomorphic spheres, one over each edge of the toric graph; it is also the locus where $f=0$ and $\mathrm{d}f = 0$. Over any point in an edge of the singular locus, the fibre  degenerates to an immersed Lagrangian $S^1\times S^2$ which intersects itself in a circle where $f=0$. These singular fibres degenerate further to have a more complicated singularity at $f=0$ over a vertex.

On the complement of the singular locus, the smooth Lagrangian torus fibration induces a $\mathbb Z$-affine structure on the base $[-\infty,\infty]\times (\mathfrak t^2)^*$. This structure is such that if $(x_1,x_2,x_3)$ are local $\mathbb Z$-affine coordinates on the base, they locally generate a free Hamiltonian $\mathbb T^3$-action on $\bar X$ with orbits the Lagrangian torus fibres. If $w\in\mathfrak t^2$ is integral, the corresponding linear function on $(\mathfrak t^2)^*$ is a $\mathbb Z$-affine function, however there is monodromy in the $\mathbb Z$-affine structure around the singular locus, so  global $\mathbb Z$-affine coordinates do not exist. Note that the real part of $\Omega_{\bar X}$ gives an orientation form on fibres, and hence induces an orientation on the base --- we choose the convention that if $(x_1,x_2,x_3)$ are oriented $\mathbb Z$-affine coordinates, then the Hamiltonian vector fields generated by $-x_1$, $-x_2$ and $-x_3$ provide an oriented basis for the tangent space of the fibres.

This fibration also restricts to a singular special Lagrangian torus fibration on the toric boundary stratum $\bar X_\ell$ at the end of a leg $\ell$. Here, the singular locus consists of a single point for each leg ending at this stratum. Both the $\mathbb Z$-affine stucture from the Lagrangian torus fibration and the orientation from $\Omega_{\bar X_\ell}$ coincide with the corresponding structure induced on the boundary of $\bar X$.

There is another interesting singular Lagrangian fibration
\[ 
\lrb{\frac{f-1}{\abs{f-1}},\pi}:\bar X \setminus f^{-1}(\{1,\infty \}) \longrightarrow S^1 \times (\mathfrak t^2)^* 
\] 
whose fibres are now non-compact Lagrangian manifolds which are special in the sense that $\Omega_{\bar X}$ restricts to be purely imaginary on them. On $(X,\Omega_X)$, there is a related non-compact special Lagrangian fibration given by the map $((f-\bar f),\pi)$. Of particular interest are the Aganagic--Vafa branes, given by $f^{-1}(-\infty,0]$ intersected with the inverse image of a point on an edge. These are diffeomorphic to $S^1\times \mathbb R^2$, intersecting $f^{-1}(0)$ in the circle $S^1\times\{0\}$; and are also special Lagrangians in $\bar X$ with the logarithmic holomorphic volume form $\Omega_{\bar X}$. These Aganagic--Vafa branes in $\bar X$ have  boundary a $\mathbb T^2$--orbit in $f^{-1}(\infty)$.

The above singular special Lagrangian fibration on $\bar X$ also restricts to $\bar X_\ell \subset \bar X$, and $\Omega_{\bar X_\ell}$ is also purely imaginary restricted to fibres. As there is a unique singular point corresponding to $\ell$ in the fibration of $\bar X_\ell$, there is a unique Aganagic--Vafa brane $A_\ell \subset \bar X_\ell$ corresponding to $\ell$ over $f^{-1}(-\infty,0]$. We orient $A_\ell$ so that $i\Omega_{\bar X_\ell}$ is a volume form on $A_\ell$. Topologically, $A_\ell \subset \bar X_\ell$ is a disk with boundary on $f^{-1}(\infty)$, so $A_\ell$ defines a class $[A_\ell] \in H_2(\bar X_\ell, f^{-1}(\infty))$.

\subsection{The topological vertex} 

The topological vertex, introduced by Aganagic, Klemm, Mari\~{n}o and Vafa, involves a virtual count of holomorphic curves in $X=\mathbb C^3$ with boundary on three chosen Aganagic--Vafa branes over the three legs of the corresponding toric graph~\cite{AKMV}. In this case, $f=z_1z_2z_3$ and the singular locus consists of the points where two coordinate functions vanish.

To define the topological vertex, the extra information of a framing is required --- in \cite{AKMV}, a framing is given by a choice of 1-dimensional sub-torus of $\mathbb T^2$ acting freely on this Aganagic--Vafa brane. Li, Liu, Liu, and Zhou provide a mathematical definition of the topological vertex using relative Gromov--Witten invariants~\cite{LLLZ}. In both \cite{LLLZ} and
 our setting, the framing on each leg is provided by a toric compactification $\bar X$ of $X$, so the new boundary component $\bar X_\ell$ at the end of each leg $\ell$ is invariant under the 1-dimensional sub-torus given by the framing. Symplectically, this $\bar X_\ell$ can be regarded as the quotient of a hypersurface by this 1-dimensional sub-torus, and $A_\ell \subset \bar X_\ell$ is the quotient of an Aganagic--Vafa brane by this 1-dimensional sub-torus.

The definition of the topological vertex in~\cite{LLLZ} involves a count of holomorphic curves in $\bar X$, touching the new parts of the boundary divisor only in $A_\ell \subset \bar X_\ell$. Heuristically, a point on a holomorphic curve sent to $A_\ell$ and tangent to $\bar X_\ell$ with order $k$ plays the role of a boundary of a holomorphic curve that wraps $k$ times around the $S^1$ direction of the Aganagic--Vafa brane. In both~\cite{AKMV} and~\cite{LLLZ}, the invariants only see curves around the singular locus of $\bar X$, where $f=0$ and $\mathrm{d}f=0$. Accordingly,~\cite{LLLZ} defines the required curve counts in terms of formal relative Gromov--Witten invariants, with `formal' meaning that the moduli stack of holomorphic curves is restricted to a formal neighbourhood of the curves in the singular locus. This restriction allows~\cite{LLLZ} to use the technology of Gromov--Witten invariants relative to smooth divisors from~\cite{Li}, avoiding the need for Gromov--Witten invariants relative to normal crossing divisors, which were only defined after~\cite{LLLZ} was published; see~\cite{GSlogGW,IonelGW,vfc}. In what follows, we instead use Gromov--Witten invariants of $\bar X$ relative to the normal crossing divisor $D$, so that we can apply the tropical gluing formula from~\cite{gfgw} to derive tropical symmetries of the topological vertex.

\subsection{Substituting holomorphic constraints for Lagrangian constraints} 

In general, the moduli stack of holomorphic curves in $\bar X$ constrained using $A_\ell \subset \bar X_\ell$ has nonempty boundary, so it is more convenient for us to use different constraints. When no other legs have the same framing as $\ell$, $f^{-1}(0)\subset \bar X_\ell$ is the union of two embedded holomorphic spheres $E^+_\ell$ and $E^-_\ell$ so that 
\begin{equation} \label{epm}
[E^\pm_\ell] = \pm[A_\ell] \text { in } H_2(\bar X_\ell, f^{-1}(\infty)) \ .
\end{equation} 
We can make a minor perturbation of $A_\ell$ for different legs $\ell$, so that their image under $f$ intersects only at $0$. Then, the constraint of requiring holomorphic curves in $\bar X$ to only touch the divisor in these $A_\ell$ is satisfied only by curves in $f^{-1}(0)$. We can therefore get the same counts by using $E^+_\ell$ or $-E^-_\ell$ as a constraint in place of $A_\ell$.

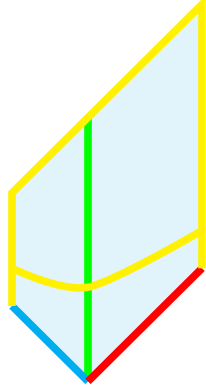
\begin{figure}[ht!]
\centering
\begin{tikzpicture}
\fill[toric style] (-1,1) -- (0,0) -- (1.5,1.5) -- (1.5,5) -- (-1,2.5) -- cycle;
\draw[green, line width = 1mm] (0,0) -- (0,3.5);
\draw[cyan, line width = 1mm] (0,0) -- (-1,1);
\draw[red, line width = 1mm] (0,0) -- (1.5,1.5);
\draw[yellow, line width = 1mm] (1.5,1.5) -- (1.5,5) -- (-1,2.5) -- (-1,1);
\draw[yellow, line width = 1mm] plot [smooth] coordinates{(-1,1.5) (0,1.25) (1.5,2)};
\end{tikzpicture}
\caption{A toric picture of $\bar X_\ell$, with divisor $D_\ell$ shown in yellow, the Lagrangian submanifold $A_\ell$ shown in green, and the holomorphic spheres $E^+_\ell$ shown in red, and $E^-_\ell$ shown in blue.} 
\label{xlpic}
\end{figure}

As classes in $H_2(\bar X)$, we can readily calculate the intersections between $E^{\pm}_\ell$.
\begin{equation} \label{eint}
[E^+_\ell]\cdot [E^-_\ell]=1 \qquad [E^+_\ell]\cdot[E^+_\ell]=-1 \qquad [E^-_\ell]\cdot[E^-_\ell]=-1
\end{equation}
These spheres $E^\pm_\ell$ are related to the two choices of normal vectors to the leg $\ell$. In particular, there is a canonical choice\footnote{In this paper, there are a series of choices, choosing an orientation on $A_\ell$, distinguishing $E^+$ and $E^-$, and $\pm n_\ell$, and further choices on signs of operators $\W_{v,\ell}$. These choices are inconsequential, so long as the choices for different legs are compatible. } of normal vector $n_\ell\in\mathfrak t^2_{\mathbb Z}$ such that, for some choice of moment map, $\bar X\longrightarrow \mathfrak (\mathfrak t^2)^*$ (recalling that the moment map is only canonical up to translation) the hamiltonian function corresponding to $n_\ell$ is positive on $E^+$ and negative on $E^-$. Another way of describing this canonical choice is as follows: removing their intersection with the divisor, $E^\pm$ is a complex plane, and  the action of $n_\ell$ on $E^+$ has weight $1$ and the action of $n_\ell$ on $E^-$ has weight $-1$.

When $k$ legs have the same framing, $f^{-1}(0)\subset \bar X_\ell$ consists of $k+1$ spheres. In this case define $E^\pm_\ell$ to be a union of these spheres satisfying \cref{epm}. In each case, $f^{-1}(0)=E^+_\ell\cup E^-_{\ell}$, with both $E_\ell^+$ and $E^-_\ell$ connected, and intersecting only at the point in $\bar X_\ell$ corresponding to $\ell$. \Cref{eint} still holds. Moreover, for two legs $\ell$ and $\ell'$ with the same framing, $[E^{\pm}_\ell]\cdot [E^{\pm}_{\ell'}]=0$.

There exist toric degenerations of $X$ which break the original toric graph at some internal edges into matched pairs of legs $\ell^+, \ell^-$ with the opposite framings $w_{\ell^-}=-w_{\ell^+}$. There is a natural identification of $\bar X_{\ell^+}$ with $\bar X_{\ell^-}$, but the induced holomorphic volume forms are opposite $\Omega_{\bar X_{\ell^-}}=-\Omega_{\bar X_{\ell^+}}$, so $E_{\ell^-}^+=E_{\ell^+}^-$. Holomorphic curves within $f^{-1}(0)$ in such situations can be analysed using tropical curves in a $\mathbb Z$-affine space modelled on $\mathfrak t^2$, but with a 1-dimensional piecewise linear singular locus. The broken internal edge of the toric graph corresponds to an edge in this singular locus travelling in the direction $w_{\ell^+}$, and there are tropical curves corresponding to $E_{\ell^+}^\pm$ travelling out from this singular edge in the directions $\pm n_{\ell^+}$. Our holomorphic volume form gives a canonical orientation of $\mathfrak t^2_{\mathbb Z}$ such that for each leg, $(w_\ell, n_\ell)$ is an oriented basis for $\mathfrak t^2_{\mathbb Z}$. See \cref{fig:toricdegeneration} for an example of a toric degeneration.

\begin{figure}[ht!]
\centering
\begin{tikzpicture}
\begin{scope}
\filldraw[toric style] (0,0) rectangle (4,2);
\draw[wall style] (0.3,0.5) -- (0.3,1.5) -- (2.5,1.5) -- (2.5,0.5) -- cycle;
\draw[wall style] (0,0.2) -- (0.3,0.5);
\draw[wall style] (0,1.8) -- (0.3,1.5);
\draw[wall style] (3,0) -- (2.5,0.5);
\draw[wall style] (3,2) -- (2.5,1.5);
\draw[cyan, very thick] (0,0.9) -- (0.8,0.9) -- (1.1,1.2) -- (1.1,2);
\draw[cyan, very thick] (0.8,0) -- (0.8,0.9) -- (1.1,1.2) -- (4,1.2);
\end{scope}
\begin{scope}[xshift=5cm]
\fill[box style] (0,0) rectangle (4,2);
\draw[curve style] (0.25,1.5) -- (0.75,2);
\draw[curve style] (1.5,0) -- (1.5,0.5);
\draw[curve style] (2,1.75) -- (2.75,1) -- (4,1);
\draw[curve style] (2.75,1) -- (2.75,0);
\draw[wall style] (2,0) -- (2,2);
\draw[wall style] (0,0.5) -- (2,0.5);
\draw[wall style] (0,1.5) -- (2,1.5);
\draw[wall style] (0.5,0.5) -- (0.5,1.5);
\end{scope}
\end{tikzpicture}
\caption{A degeneration of $\bar X$ into four toric manifolds, and three red tropical curves ending on the black singular locus of the corresponding $\mathbb Z$-affine space.}
\label{fig:toricdegeneration}
\end{figure}
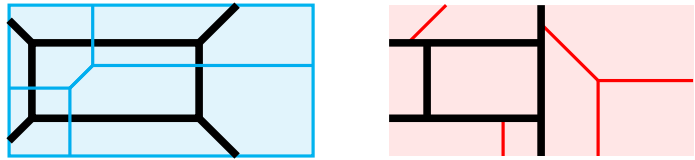

Using Poincar\'{e} duals, we define the cohomology classes
\[
\alpha_\ell=\alpha^+_\ell:=PD(E^+_\ell) \qquad \text{and} \qquad \alpha^-_\ell:=PD(E^-_\ell) \ .
\] 
Considering these cohomology classes as differential forms, we have 
\[ 
\int_{\bar X_\ell}\alpha^+_\ell \wedge \alpha^-_\ell=1 \qquad \text{and} \qquad \int_{\bar X_\ell}\alpha^\pm_\ell\wedge\alpha^\pm_\ell=-1 \ . 
\] 
Inside the cohomology of $\bar X_\ell$ relative $f^{-1}(\infty)$, we have $\alpha^-_\ell=-\alpha^+_\ell$, and making this identification uncovers some beautiful structure in our Gromov--Witten invariants.

To explain the full structure of relative Gromov--Witten invariants of $(\bar X,D)$, it is convenient to use exploded manifolds, so instead of the complex manifold $\bar X$ with the normal crossing divisor, we instead use its explosion $\expl (\bar X,D)$, and define invariants using exploded holomorphic curves in $\expl(\bar X,D)$; see~\cite{iec,scgp}. For the present discussion, it is enough to think of the moduli space of holomorphic curves in $\expl (\bar X,D)$ as a suitable compactification of the moduli space of holomorphic curves in $\bar X$, which are smooth and not contained in $D$. When we apply the explosion functor to such a smooth holomorphic curve, we obtain an exploded holomorphic curve in $\expl(\bar X,D)$, however the moduli space of exploded curves keeps track of more structure when such curves sink into the divisor. In particular, there is an evaluation map from the moduli stack of exploded curves analogous to the evaluation at a point of contact with $\bar X_\ell$, except now this map has codomain the exploded manifold $\expl (\bar X_\ell, D_\ell)$. We introduce the notation 
\[
\ex X_{w_\ell}:=\expl(\bar X_\ell, D_\ell) \ .
\]

Evaluation at a point of contact order $k$ to $\bar X_\ell$ determines a map to the quotient stack\footnote{This stack is constructed in Section 3 of~\cite{gfgw}. In more general situations, it is not naturally a quotient stack, but we can identify it with a quotient stack in this case by choosing a $k$th root $L_k$ of the normal bundle to $\bar X_\ell \setminus D_\ell \subset \bar X$, constructed using the trivialisation from the primitive monomial $z^\beta$ that vanishes on $\bar X_\ell$, but is a non-vanishing holomorphic function on $f^{-1}(0)$ near $\bar X_\ell$. The evaluation map to $X_\ell / \mathbb Z_k\subset \bar X_\ell / \mathbb Z_k$ is then given by the usual evaluation map to $X_\ell$, and the $\mathbb Z_k$--bundle whose fibres are isomorphisms of the tangent space at the marked point with the pullback of $L_k$ compatible with the natural isomorphism of the $k$th tensor power of this tangent space with the pullback of the normal bundle. For a more precise explanation, either log geometry or exploded manifolds can be used to describe what happens in the boundary of the moduli space, when curves sink into the divisor.} $\bar X_\ell / \mathbb Z_k$, where we take the quotient by the trivial $\mathbb Z_k$-action. The exploded manifold analogue is given by
\[
\ex X_{kw_\ell}/G_{kw_\ell}, \quad \text{where } \ex X_{kw_\ell}:= \expl(\bar X_\ell,D_\ell) \quad \text{and} \quad G_{kw_\ell}:=\mathbb Z_k \ .
\]

As $E^\pm\longrightarrow \bar X_\ell$ is holomorphic, and not contained in $D_\ell$, we can apply the explosion functor to this map, and, using~\cite{dre}, define the Poincar\'{e} dual to this as the class $\alpha^\pm_{w_\ell}\in H^2(\ex X_{w_\ell})$. Pushing this class forward to $\ex X_{kw_\ell}/G_{kw_\ell}$ gives a class $\alpha^{\pm}_{k,\ell}\in H^2(\ex X_{kw_\ell}/G_{kw_\ell})$, whose pullback to $\ex X_{w_\ell}$ is $k\alpha^\pm_{\ell}$. With these definitions,  we have 
\[
\int_{\ex X_{kw_\ell}/G_{kw_\ell}}\alpha^+_{k,\ell}\wedge \alpha^-_{k,\ell}:=\frac 1k\int_{\ex X_{w_\ell}}k\alpha^+_\ell\wedge k\alpha^-_\ell=k \qquad \text{and} \qquad \int_{\ex X_{kw_\ell}/G_{kw_\ell}}\alpha^\pm_{k,\ell}\wedge \alpha^\pm_{k,\ell}=-k.
\]

\section{Contact data and tropical curves} \label{cdtc} 

The exploded manifold $\expl (\bar X,D)$ has a functorial projection to a singular $\mathbb Z$-affine space, called its {\bf tropical part} $\totb{\expl (\bar X,D)}$~\cite{iec}. This tropical part has a stratification into $\mathbb Z$-affine cones, each isomorphic to $[0,\infty)^3$, $[0,\infty)^2$, $[0,\infty)$, or a point, where each $k$-dimensional cone corresponds to the intersection of $k$ components of $D$. See \cref{fig:momentpolytope} for an example of the tropical part of an exploded manifold.

\begin{figure}[ht!]
\centering \includegraphics[height=5cm]{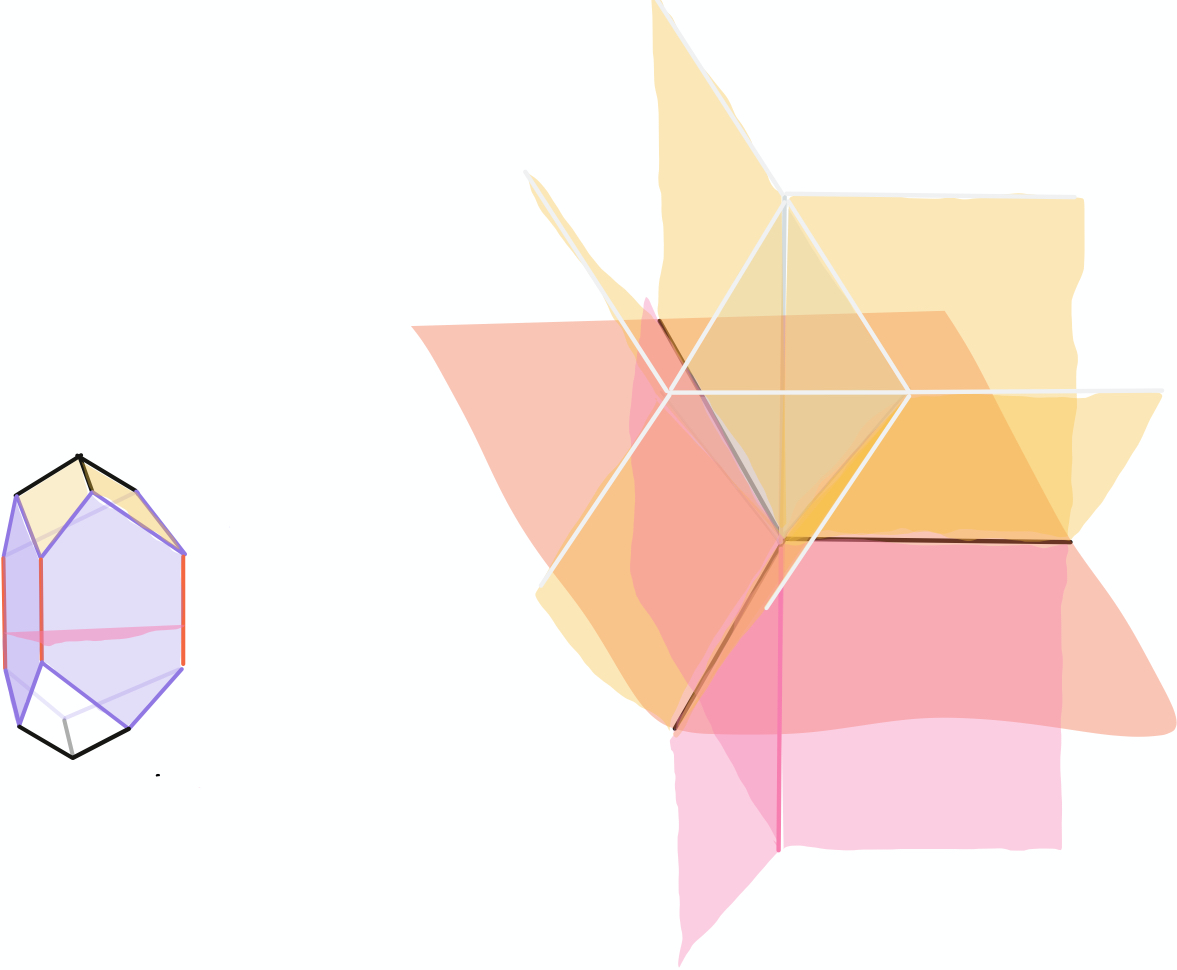}
\caption{A toric moment polytope of $\bar X$ in the case $X=\mathbb C^3$ on the left and its tropical part $\totb{\expl (\bar X,D)}$ on the right.}
\label{fig:momentpolytope}
\end{figure}

In general, the $\mathbb Z$-affine structure on the tropical part of an exploded manifold does not extend across strata. In this Calabi--Yau case however, we can extend the $\mathbb Z$-affine structure to a global singular $\mathbb Z$-affine structure, with singular rays corresponding to the legs of our toric graph. Divide $\totb{\expl (\bar X,D)}$ into two nonsingular half spaces, glued over a wall isomorphic to $\mathfrak t^2$, to create a singular $\mathbb Z$-affine space with singularities along the rays spanned by framing vectors $w_\ell$. The top half of $\totb{\expl(\bar X,D)}$ is the tropical part of $\expl \left((\bar X,D)\setminus f^{-1}(1)\right)$, which is naturally identified with the toric fan of $\bar X\setminus f^{-1}(0)$. As such, it has a natural global $\mathbb Z$-affine structure as a closed half space within the Lie algebra of the torus $\mathbb T^3$.  The wall is the tropical part of $\expl\left((\bar X,D)\setminus f^{-1}(\{1,\infty\})\right)$, which is naturally isomorphic to $\mathfrak t^2$, subdivided by the toric fan of $\bar X\setminus f^{-1}(\{0,\infty\})$. The bottom half is the tropical part of $\expl (\bar X,D)\setminus f^{-1}(\infty)$. This is naturally isomorphic to $\mathfrak t^2\times[0,\infty)$, with the stratification induced from the product of the stratification of the wall $\mathfrak t^2$, and $[0,\infty)$. The distinguished stratum $\{0\} \times [0,\infty)$ in this bottom half corresponds to the component $f^{-1}(1)$ of the divisor $D$.

We extend the $\mathbb Z$-affine structure of $\totb{\expl (\bar X,D)}$ by gluing the bottom half to the top half as follows. Let $v_f$ be the primitive integral vector in the $[0,\infty)$ direction, corresponding to the component $f^{-1}(1)$ of the divisor $D$. Using the given $\mathbb Z$-affine structure, we can transport $v_f$ anywhere in the bottom half, and we now specify how to transport $v_f$ into the top half over the interior of a 2-dimensional cone $\sigma$ on the wall. This cone corresponds to the intersection of two toric strata of $\bar X$ intersecting $f^{-1}(0)$ in exactly one stratum $S_\sigma$. Let $v_{S_\sigma}$ be the corresponding primitive integral vector in the toric fan of $\bar X$. This then can be considered as a constant vector field on the top half, which is naturally identified with half of the toric fan of $\bar X$. We extend the $\mathbb Z$-affine structure by gluing the top and bottom halves over $\sigma$ so $v_{f}$ is sent to $v_{S_\sigma}$ when we parallel transport $v_f$ over the cone $\sigma$. Note that this depends on the cone $\sigma$. If $\sigma$ and $\sigma'$ intersect along the singular ray spanned by $w_\ell$, and no other legs have the same framing, then the difference between $v_{S_\sigma}$ and $v_{S_{\sigma'}}$ is the normal vector $n_\ell$.

The importance of this global $\mathbb Z$-affine structure is due to the following. Each exploded holomorphic curve in $\expl (\bar X,D)$ has a tropical part consisting of a tropical curve in $\totb{\expl(\bar X, D)}$. These tropical curves satisfy the usual tropical balancing condition at vertices unless these vertices are on the singular locus. This balancing condition follows from the balancing condition for holomorphic curves in the explosion of toric manifolds relative to toric boundary divisors, because apart from the strata corresponding to legs, each stratum of the boundary divisor $D$ has a neighbourhood isomorphic to a toric boundary stratum in a toric manifold. See \cref{fig:tropicalcurves} for a diagrammatic representation of tropical curves in the tropical part of an exploded manifold.

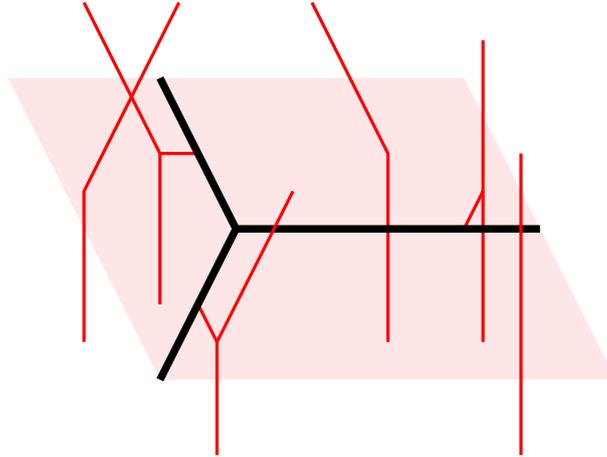
\begin{figure}[ht!]
\centering
\begin{tikzpicture}
\fill[box style] (-1,2) -- (1,-2) -- (7,-2) -- (5,2) -- cycle;

\draw[curve style] (3,3) -- (4,1) -- (4,-1.5);

\draw[curve style] (5,0) -- (5.25,0.5);
\draw[curve style] (5.25,-1.5) -- (5.25,2.5);

\draw[wall style] (2,0) -- (6,0);

\draw[curve style] (1.5,-1) -- (1.75,-1.5) -- (2.75,0.5);
\draw[curve style] (1.75,-1.5) -- (1.75,-3);

\draw[curve style] (1.5,1) -- (1,1) -- (0,3);
\draw[curve style] (1,1) -- (1,-1);

\draw[curve style] (1.25,3) -- (0,0.5) -- (0,-1.5);

\draw[curve style] (5.75,-3) -- (5.75,1);

\draw[wall style] (1,-2) -- (2,0) -- (1,2);
\end{tikzpicture}
\caption{A diagrammatic representation of tropical curves in $\totb{\expl(\bar X,D)}$, with the wall shown in pink, the singular locus shown in black, and tropical curves in red. Note that how tropical curves bend when passing through the wall depends on what cone they pass through. Through the interior of each cone $\sigma$ in the above picture, one of the above tropical curves is a `straight line' coming down to the wall in the direction $v_{S_\sigma}$, and passing through the wall in the direction $v_f$. These tropical curves are the tropical part of holomorphic spheres which have sunk into the component of the divisor corresponding to $\sigma$. The corresponding family of holomorphic spheres close to this component of the divisor consists of the orbits of the $\mathbb C^*$-action generated by $v_{S_\sigma}$.}
\label{fig:tropicalcurves}
\end{figure}

Relative Gromov--Witten invariants of $(\bar X,D)$ count holomorphic curves with specified contact with the divisor $D$, or more accurately, count the corresponding curves in $\expl(\bar X,D)$. In what follows, we formalise how we encode contact data. Given a smooth holomorphic curve in $\bar X$ intersecting the divisor in a collection of points, the explosion of this holomorphic curve has tropical part a tropical curve in $\totb{\expl(\bar X,D)}$ consisting of a single vertex sent to $0$, connected to an edge $[0,\infty)\longrightarrow \totb{\expl(\bar X,D)}$ for each point of contact with $D$. Let $M_{\mathbb Z}$ be the set of non-constant $\mathbb Z$-affine maps $[0,\infty)\longrightarrow \expl(\bar X,D)$ sending $0$ to $0$. The derivative of such a map is a nonzero integral vector $v$ in some cone $\sigma$ of $\expl(\bar X,D)$, and we use the notation $v\in M_{\mathbb Z}$.

The relationship between $v\in M_{\mathbb Z}$ and contact with the divisor is as follows: If $v$ is within the 1-dimensional cone corresponding to a component $S$ of the divisor, $v=kv_S$, where $v_S$ is the corresponding primitive integral vector and $k$ is a positive integer. Then $v$ indicates contact of order $k$ with the interior of $S$. More generally, if $v$ is in the interior of a cone spanned by $v_{S_i}$, we have $v=\sum_ik_i v_{S_i}$, and $v$ indicates contact with $\bigcap_i S_i$, of order $k_i$ with $S_i$.

We encode contact data $\mathbf p$ for a curve as a map
\[
\mathbf p: M_{\mathbb Z}\longrightarrow \{0, 1, 2, \ldots\} \ ,
\]
so that a curve with contact data $\mathbf p$ has exactly $\mathbf p(v)$ contact points of type $v\in M_{\mathbb Z}$. For curves with contact data $\mathbf p$ to have finite energy, $\mathbf p(v)$ must be zero for all but finitely many $v\in M_{\mathbb Z}$, so all contact data $\mathbf p$ is automatically assumed to have this property. This contact data continues to make sense for exploded curves that have sunk into the divisor. An exploded curve with contact data $\mathbf p$ is one whose tropical part continuously deforms to a tropical curve with all vertices at $0$, and with exactly $\mathbf p(v)$ infinite rays of type $v$ emanating from $0$.

The curves most relevant to this paper are contained in $f^{-1}(0)$, and thus never intersect $f^{-1}(1)$ or $f^{-1}(\infty)$. As such they have contact data $\mathbf p$ supported on 
\[
\mathfrak t^2_{\mathbb Z}\setminus \{0\} \subset M_{\mathbb Z} \ .
\]

Specialising further, we will be counting holomorphic curves in $\bar X$ only intersecting $D$ in the strata $\bar X_\ell$ at the end of each leg, and constrained to $E_\ell^+$. To specify the contact data of such curves, we need a sequence of natural numbers ${\mathbf p^\ell}:=(p^\ell_1,p^\ell_2, \ldots)$ for each leg $\ell$; this encodes contact data $\mathbf p^\ell: M_{\mathbb Z} \longrightarrow \{0, 1, 2, \ldots\}$ such that $\mathbf p^\ell(kw_\ell)= p^\ell_k$, and $\mathbf p^\ell(v)=0$ for any $v\in M_{\mathbb Z}$ that is not a positive multiple of the framing vector $w_\ell$. After specifying such contact data $\mathbf p^\ell$ for each leg $\ell$, we obtain contact data $\mathbf p=\sum_\ell \mathbf p^\ell$. Note however, that in the case that multiple legs have the same framing, we can not recover $\mathbf p^\ell$ from $\mathbf p$.

\section{The evaluation spaces \texorpdfstring{$\ex X_v$}{Xv} and \texorpdfstring{$\ex X^{\mathbf p}$}{Xp}} \label{es}

Suppose $v\in M_{\mathbb Z}$ is a primitive integral vector corresponding to a component $\bar X_v$ of the divisor $D$. The intersection $D_v$ of this component with the rest of the divisor is a normal crossing divisor in $\bar X_v$. Evaluation at a contact point of type $v$ determines a map to the exploded manifold
\[
\ex X_v:=\expl (\bar X_v,D_v)\ .
\]

More generally, to each primitive integral vector $w\in M_{\mathbb Z}$, we can associate an exploded manifold $\ex X_w$, constructed in~\cite[Section 3]{gfgw}. This exploded manifox $\ex X_w$ has a refinement,~\cite[Section 10]{iec}, which can be constructed as follows. We can perform blowups of $(\bar X,D)$, locally modelled on toric blowups, until $w$ corresponds to a component $(\bar X'_w,D_w)$ of the blown up divisor. These blowups correspond to refinements of $\expl(\bar X, D)$, which induce refinements of $\ex X_w$. In particular, $\expl(\bar X'_w,D_w)$ is the induced refinement of $\ex X_w$. In the case of a smooth curve with a contact point of type $w$, the evaluation map at this point can be understood as follows: performing these blowups gives a curve with simple contact with the interior of $\bar X'_w$, so evaluation at this point gives a map to $\bar X'_w$.

Using the language of exploded manifolds, we can describe $\ex X_w$ more explicitly in coordinates. Let $\widetilde {\ex X}_w$ be the subset of $\expl(\bar X,D)$ over the stratum corresponding to $w$; there is a canonical projection $\widetilde {\ex X}_w\longrightarrow \ex X_w$, that can be roughly thought of as the quotient by an action corresponding to $w$. We describe this more precisely below.

If $w$ is in the interior of the upper half of $M_{\mathbb Z}$, we can choose a basis $(z^{\alpha_1}, z^{\alpha_2}, z^{\beta})$ for toric monomials such that the flow induced by $w$ acts with weight 0 on $z^{\alpha_i}$ and weight $-1$ on $z^\beta$. These monomials do not generally extend to smooth functions on $\bar X$, however they each extend to smooth maps, $\widetilde z^{\alpha_i}$ and $\widetilde z^\beta$, from $\expl (\bar X,D)\setminus f^{-1}(0)$ to the exploded manifold $\ex T$, so they define exploded coordinate functions; see~\cite[Section 3]{iec}. These exploded coordinates $(\widetilde z^{\alpha_1},\widetilde z^{\alpha_2},\widetilde z^{\beta})$ give global coordinates on $\widetilde{\ex X}_w$. The exploded manifold $\widetilde {\ex X}_w$ is defined using its smooth part $\totl{\widetilde{\ex X}_w}$, which is the stratum of $D$ corresponding to $w$, and the sheaf of exploded functions generated by $\widetilde z^{\alpha_i}$ and $\widetilde z^{\beta}$. The exploded manifold $\ex X_w$ has the same smooth part, but its sheaf of exploded functions is generated by only $\widetilde z^{\alpha_1}$ and $\widetilde z^{\alpha_2}$; so the projection $\widetilde {\ex X}_w\longrightarrow \ex X_w$ simply forgets the coordinate $\widetilde z^\beta$.

If $w$ is in the lower half of $M_{\mathbb Z}$, we can similarly choose maps $\widetilde z^{\alpha_i}$ and $\widetilde z^{\beta}$ from $\widetilde {\ex X}_w$ to $\ex T$ such that the projection to $\ex X_w$ is given by $\widetilde z^{\alpha_i}$, however these exploded functions are obtained using monomials with $1-f$ replacing the role of the primitive monomial $f$. In particular, choose a basis $(f,u_1,u_2)$ for toric monomials on $X$, such that $u_i$ extend to holomorphic functions on $f^{-1}(0)$ near the stratum of $\bar X$ corresponding to $w$. Consider monomials in $1-f$, $u_1$ and $u_2$. Each of these monomials extends to a map to $\ex T$ near the stratum corresponding to $w$, and, so long as $w$ is not a framing vector, these exploded functions provide global coordinates on $\widetilde {\ex X}_w$. (When $w$ is a framing vector, these functions only fail to provide global coordinates because the derivative of $1-f$ vanishes when $f=0$.) Suppose that $w=av_f+w'$ with $w'\in\mathfrak t^2_{\mathbb Z}$ and $a\geq 0$, and suppose that $w'$ acts with weight $-b$ on $u_1$ and $-c$ on $u_2$. The vector $w\in M_{\mathbb Z}$ is primitive when $(a,b,c)$ is a primitive vector in $\mathbb Z^3$. The description of $\ex X_w$ is as above in the toric case, except the exploded coordinates $\widetilde z^{\alpha_i}$ are the extensions of $(1-f)^{x_1}u_2^{x_2}u_3^{x_3}$ where $(x_1,x_2,x_3)\in \mathbb Z^3$ satisfy $x_1a+x_2b+x_3c=0$, and $\widetilde z^\beta$ is the extension of some monomial in this form such that $x_1a+x_2b+x_3c=1$.

More generally, evaluation at contact points of type $kw\in M_{\mathbb Z}$ with $w$ primitive, determines a map to
\[
\ex X_{kw}/G_{kw}, \quad \text{where } \ex X_{kw}:=\ex X_w \quad \text{and} \quad G_{kw}:=\mathbb Z_k \ .
\]
As in the case when $w$ is a framing vector, the codomain of this evaluation map is a stack which is not naturally a quotient stack, however we can identify it with the quotient of $\ex X_{kw}$ by the trivial $\mathbb Z_k$-action by choosing a $k$th root of the coordinate function $\widetilde z^{\beta}$ on $\widetilde{\ex X}_w$.

Each $\ex X_w$ has a natural holomorphic volume form $\Omega_{\ex X_w}$ induced from $\Omega_{\bar X}$. On $\widetilde {\ex X}_w$, 
\[
\Omega_{\bar X}=h \, \frac{\mathrm{d}\widetilde z^{\alpha_1}}{2\pi i\widetilde z^{\alpha_1}}\wedge \frac{\mathrm{d}\widetilde z^{\alpha_2}}{2\pi i\widetilde z^{\alpha_2}}\wedge \frac{\mathrm{d}\widetilde z^{\beta}}{2\pi i\widetilde z^{\beta}} \ ,
\] 
where $h$ is holomorphic and $\mathbb C^*$-valued. When $w$ is primitive, $\Omega_{\ex X_w}$ is then defined by 
\[
\Omega_{\ex X_w} = -h \, \frac{\mathrm{d}\widetilde z^{\alpha_1}}{2\pi i\widetilde z^{\alpha_1}}\wedge \frac{\mathrm{d}\widetilde z^{\alpha_2}}{2\pi i\widetilde z^{\alpha_2}} \ ,
\]
and for a positive multiple $kw$ of $w$, define $\Omega_{\ex X_{kw}}$ by
\[
\Omega_{\ex X_{kw}}=-kh \, \frac{\mathrm{d}\widetilde z^{\alpha_1}}{2\pi i\widetilde z^{\alpha_1}}\wedge \frac{\mathrm{d}\widetilde z^{\alpha_2}}{2\pi i\widetilde z^{\alpha_2}}\ .
\]

Similarly, for any holomorphic volume form $\Omega$ on an exploded manifold $\ex X$, there is an analogous holomorphic volume form $\Omega_{\ex X_v}$ induced on the evaluation space $\ex X_v$. In general, $\widetilde{\ex X}_v\subset \ex X$ has an integral vector field $v'$ such that the projection $\widetilde {\ex X}_v\longrightarrow \ex X_v$ can be regarded as a quotient by an action generated by $v'$, and $\Omega_{\ex X_v}$ is defined so that its pullback to $\widetilde {\ex X}_v$ is $2\pi i \iota_{v'}\Omega$.

Given contact data $\mathbf p$, define 
\[
\ex X^{\mathbf p}:=\prod_{v\in M_{\mathbb Z}} \ex X_v^{\mathbf p(v)}\ .
\]
As $\Omega_{\ex X_v}$ is a holomorphic symplectic form on $\ex X_v$, the sum of the pullbacks of these forms defines a holomorphic symplectic form $\Omega_{\ex X^{\mathbf p}}$ on $\ex X^{\mathbf p}$. The significance of this holomorphic symplectic form is that the image of the evaluation map is a holomorphic Lagrangian; see \cref{holomorphic Lagrangian} below.

On $\ex X^{\mathbf p}$, we have an action of the symmetric group $S_{\mathbf p(v)}$ permuting all the factors of $\ex X_v$, and therefore an action of
\[
\Aut \mathbf p:=\prod_v S_{\mathbf p(v)} \ .
\]
There is also an action of $\Aut \mathbf p$ on $\prod_v{G_v}^{\mathbf p(v)}$ permuting the different factors. Let $G_{\mathbf p}$ be the corresponding semi-direct product of $\prod_v S_{\mathbf p(v)}$ with $\Aut\mathbf p$. 
\[
\prod_v G_v^{\mathbf p(v)}\hookrightarrow G_{\mathbf p}\longrightarrow \Aut \mathbf p
\]
There is a natural action of $G_{\mathbf p}$ on $\ex X^\mathbf p$ factoring through the permutation action of $\Aut\mathbf p$. The codomain of the evaluation map from the moduli stack of holomorphic curves with contact data $\mathbf p$ is the quotient stack
\[
\ex X^{\mathbf p}/G_{\mathbf p}\ .
\]
For a vector $v\in M_{\mathbb Z} $, let $\abs v$ denote the positive integer such that $v/\abs v$ is a primitive integral vector, so $G_v=\mathbb Z_{\abs v}$. We have
\[
\abs{G_{\mathbf p}}=\prod_v \abs v^{\mathbf p(v)}\mathbf p(v)! \ .
\]

The moduli stack of holomorphic curves in $\expl(\bar X, D)$ with contact data $\mathbf p$ has a natural evaluation map $ev$ with codomain $\ex X^{\mathbf p}/G_\mathbf p$. Taking the fibre product\footnote{More explicitly, $\mathcal M_{\mathbf p}$ is the moduli stack of holomorphic curves in $\expl (\bar X, D)$ with contact data $\mathbf p$, and with labelled asymptotic markers at each contact point. In the language of exploded manifolds, each contact point corresponds to an end of the holomorphic curve. An asymptotic marker at a contact point of order $k$ consists of a choice of coordinate $\widetilde z$ on this end such that the pullback of the function $\widetilde z^\beta$ is $\widetilde z^k$.} with $\ex X^{\mathbf p}\longrightarrow \ex X^{\mathbf p}/G_{\mathbf p}$ gives a $|G_{\mathbf p} |$--fold cover $\mathcal M_{\mathbf p}$ of this moduli stack with a natural evaluation map 
\[
ev:\mathcal M_{\mathbf p}\longrightarrow \ex X^{\mathbf p} \ .
\]
The complex virtual dimension of $\mathcal M_{\mathbf p}$ is $\abs{\mathbf p}:=\sum_v\mathbf p(v)$, which is half the dimension of $\ex X^{\mathbf p}$.

\begin{remark} \label{holomorphic Lagrangian}
The image of $ev$ is a holomorphic Lagrangian subset of $(\ex X^{\mathbf p}, \Omega_{\ex X^{\mathbf p}})$, so $ev^*\Omega_{\ex X^{\mathbf p}}=0$. In particular, this implies that the complex dimension of the image of $ev$ is at most $\abs{\mathbf p}$. An analogous result holds for all log Calabi--Yau threefolds; see \cite{lag} for full details. To see that $ev^*\Omega_{\ex X^{\mathbf p}}$ vanishes, pull back $\Omega_{\bar X}$ to the total space of a family of holomorphic curves parametrised by some manifold $M$, using $\Omega'_{\bar X}$ to indicate this pullback. Let $v$ and $w$ be two vector fields on $M$, with lifts $\widetilde v$ and $\widetilde w$ to vector fields on the total space of the family. Because $\Omega_{\bar X}$ is holomorphic and fibres are holomorphic, the restriction of $\iota_{\widetilde v}\iota_{\widetilde w}\Omega'_{\bar X}$ to fibres does not depend on the choice of lift, and a local calculation implies that on fibres, $\iota_{\widetilde v}\iota_{\widetilde w}\Omega'_{\bar X}$ is a holomorphic $1$-form, or a meromorphic 1-form with simple poles at each contact point, when we use the usual cotangent space on fibres instead of the logarithmic cotangent space. Then, $ev^*\Omega_{\ex X^{\mathbf p}}(v,w)$ is the sum of the residues of this meromorphic form, which is zero by Stokes' theorem.

If we break our contact data into $\mathbf p+\mathbf q$, with $\mathbf p$ considered incoming contact data and $\mathbf q$ as outgoing contact data, we can think of the image of $ev$ in $\ex X^{\mathbf p+\mathbf q}=\ex X^{\mathbf p}\times \ex X^{\mathbf q}$ as a holomorphic Lagrangian correspondence between $(\ex X^{\mathbf p}, -\Omega_{\ex X^{\mathbf p}})$ and $(\ex X^{\mathbf q},\Omega_{\ex X^{\mathbf q}})$. In particular, applying a holomorphic Lagrangian constraint $\mathcal L_p\subset \ex X^{\mathbf p}$ to holomorphic curves in $\mathcal M_{\mathbf p+\mathbf q}$, the image of the constrained moduli stack in $\ex X^q$ is also a holomorphic Lagrangian subset. For this reason, it is natural to use holomorphic Lagrangian constraints on our curves.

If we instead used special Lagrangian constraints, such as Aganagic--Vafa branes --- with Lagrangian now referring to the restriction of the ordinary symplectic form instead of $\Omega_{\ex X^{\mathbf p}}$, and special meaning that the imaginary part of $\Omega_{\ex X^{\mathbf p}}$ vanishes --- we would instead get the weaker result that the imaginary part of $\Omega_{\ex X^{\mathbf q}}$ vanishes on the image of the constrained curves, and the image of this evaluation map might no longer be holomorphic.
\end{remark}

\section{Relative Gromov--Witten invariants} \label{rgw}

Given a cohomology class in $H^{2\abs {\mathbf p}}(\ex X^{\mathbf p};\mathbb R)$ represented by a differential form $\alpha$, we can pull back $\alpha$ and integrate over the moduli stack of holomorphic curves to define a relative Gromov--Witten invariant. More generally, we can take $\alpha$ to be a differential form on a refinement of $\ex X^{\mathbf p}$, defining a class in the refined cohomology $\rh^{2\abs {\mathbf p}}(\ex X^{\mathbf p})$ ~\cite[Section 9]{dre}. Given a non-negative integer $g$ and a homology class $\beta\in H_2(\bar X;\mathbb Z)$, let $\mathcal M_{g,\mathbf p,\beta}\subset \mathcal M_{\mathbf p}$ denote the moduli stack of connected holomorphic curves with genus $g$, labelled contact data $\mathbf p$ and homology class $\beta$. Define the Gromov--Witten invariant
\[
\langle\alpha\rangle_{g,\mathbf p,\beta}:=\int_{[\mathcal M_{g,\mathbf p,\beta}]}ev^*\alpha\in \mathbb R \ ,
\]
where we can use the formalism from~\cite{vfc} to define the virtual fundamental class $[\mathcal M_{g,\mathbf p,\beta}]$ and integrate the differential form $ev^*\alpha$ over it. Also using the formalism from~\cite{vfc}, the Poincar\'{e} dual of the pushforward of $[\mathcal M_{g,\mathbf p,\beta}]$ is the cohomology class
\[
\eta_{g,\mathbf p,\beta}:=ev_! (1) \in \rh^{2\abs{\mathbf p}}(\ex X^{\mathbf p}) \ ,
\]
so it follows that
\[
\langle\alpha\rangle_{g,\mathbf p,\beta}=\int_{\ex X^{\mathbf p}}\alpha \wedge \eta_{g,\mathbf p,\beta} \ .
\]

Without using refined cohomology, the corresponding invariant would not capture the full relative Gromov--Witten invariants. For manifolds with normal crossing divisors, capturing the full relative Gromov--Witten invariants requires performing blowups on the boundary divisors until the image of $ev$ intersects boundary strata transversely. In this paper, we mainly consider moduli spaces whose image already intersects boundary strata transversely.

It is convenient to introduce formal parameters $\hbar$ and $t$ so that these invariants can be packaged in the formal sum
\begin{equation} \label{etadef}
\eta:=\sum_{\mathbf p} \eta_{\mathbf p} \ , \quad \text{where} \quad \eta_{\mathbf p}:=\sum_{g,\beta} \hbar^{2g-2+\abs{\mathbf p}} \, t^{\int_\beta\omega} \, \eta_{g,\mathbf p,\beta} \ .
\end{equation}
Here, we set $\abs{\mathbf p}:=\sum_v\mathbf p(v)$ to be the number of contact points, so removing these points gives a curve with Euler characteristic $-(2g-2+\abs{\mathbf p})$.

\section{The Novikov ring \texorpdfstring{$\N$}{N}} \label{nr}

To circumvent issues of convergence in infinite sums such as those appearing in \cref{etadef}, we work over a suitably chosen Novikov ring. Let $\N$ be the Novikov ring of formal sums
\[
\sum_{\chi\in \mathbb Z}\sum_{y\in [0,\infty)}c_{-\chi,y} \, \hbar^{-\chi} \, t^y
\]
with coefficients $c_{-\chi, y}\in\mathbb R$, such that for any $M\in\mathbb R$, there are only finitely many nonzero coefficients $c_{-\chi,y}$ with both $-\chi$ and $y$ bounded above by $M$. This ring $\N$ has a $\mathbb Z \times [0,\infty)$-grading by $(-\chi, y)$. Multiplication in $\N$ is well-defined without any notion of limits because the calculation of any coefficient in a product only involves a finite sum.

We introduce the following natural terminology for $\N$-modules.

\begin{defn} Let $\N$ be the Novikov ring introduced above.
\begin{itemize}
\item Define a {\bf graded $\N$-module} $A$ to be an $\N$-module with a $\mathbb Z\times[0,\infty)$-grading that is compatible with the grading of $\N$. For $v\in A$, use the notation $v_{-\chi,y}\in A_{-\chi,y}\subset A$ for the part of $v$ with grading $(-\chi,y)$, and use $v_{\leq M}\subset A_{\leq M}$ for the part of $v$ with grading $(-\chi,y)$ such that $-\chi \leq M$ and $y \leq M$. 

\item Define a {\bf bounded $\N$-module} $A$ to be a graded $\N$-module such that for all $v\in A$ and $M\in \mathbb R$, $v_{-\chi,y}$ is $0$ for all but finitely many $(-\chi,y)$ with $-\chi \leq M$ and $y \leq M$. Call $S \subset A$ a basis for a bounded $\N$-module if for all $v\in A$, $v_{\leq M}$ can be written uniquely as a finite $\N$--linear combination of elements of $S$.

\item Define the {\bf completion} of a bounded $\N$-module $A$ to be the limit of the $\mathbb R$-modules $A_{\leq M}$ and observe that the completion is itself a bounded $\N$-module. 

\item An $\N$-module homomorphism $\phi:A\longrightarrow B$ is {\bf graded} if it preserves the grading. It is {\bf bounded} if for all $M \in \mathbb R$, there exists an $M'$ such that $\phi^{-1}(B_{\leq M}) \subset A_{\leq M'}$.

\item Define the {\bf tensor product} of two bounded $\N$-modules $A$ and $B$ to be the completion of the tensor product of $A$ and $B$ as modules over the ring $\N$.

\item Define a {\bf bounded $\N$-algebra} $A$ to be an algebra over $\N$ such that $A$ is a bounded $\N$-module, the inclusion $\N\longrightarrow A$ is a graded homomorphism, and the multiplication $A \otimes A \longrightarrow A$ is a graded homomorphism.
\end{itemize}
\end{defn}

The completion of a bounded $\N$-module $A$ effectively allows infinite sums, as long as only finitely many terms are in $A_{\leq M}$ for each $M$. The grading on $A\otimes B$ is such that 
\[
(v\otimes w)_{-\chi,y}=\sum_{\substack{\chi_1+\chi_2=\chi \\ y_1+y_2=y} }v_{-\chi_1,y_1}\otimes w_{-\chi_2,y_2} \ , 
\]
which is a finite sum because $A$ and $B$ are assumed to be bounded $\N$-modules.

\section{The state space \texorpdfstring{$\mathcal H$}{H}} \label{ss} 

Let $\mathcal H$ be the completion of the bounded $\N$-module
\[
\bigoplus_{\mathbf p}\rh^{2\abs{\mathbf p}}(\ex X^{\mathbf p}/G_{\mathbf p};\N) := \bigoplus_{\mathbf p}\rh^{2\abs{\mathbf p}}(\ex X^{\mathbf p}/G_{\mathbf p}) \otimes_{\mathbb R} \N \ ,
\]
where we identify $\rh^*(\ex X^{\mathbf p}/G_{\mathbf p})$ with the refined cohomology defined using $(\Aut\mathbf p)$--invariant differential forms on refinements of $\ex X^{\mathbf p}$. The bounded $\N$-module $\mathcal H$ will play the role of a bosonic Fock space of states, and should be thought of as the space of possible constraints for our Gromov--Witten invariants. We shall see below that the integration pairing induces a nondegenerate bilinear pairing on $\mathcal H$ and there is a natural commutative multiplication on $\mathcal H$, where the constraint $\alpha \beta \in \mathcal H$ corresponds to applying both the constraints $\alpha\in\mathcal H$ and $\beta \in \mathcal H$. 

Since $\eta_{\mathbf p}$ is invariant under the action of $G_{\mathbf p}$ on $\ex X^{\mathbf p}$, we can consider the Gromov--Witten invariant $\eta$ from \cref{etadef} as an element of $\mathcal H$. Gromov compactness implies that $\eta_{\leq M}$ is a finite sum, so $\eta$ certainly lies in $\mathcal H$.

For $\alpha\in \mathcal H$, use the notation $\alpha_{\mathbf p}$ for the pullback of $\alpha$ to $\rh^{2\abs{\mathbf p}}(\ex X^{\mathbf p};\N)$. A special case of this is given by $\alpha_0\in H^0(\ex X^0;\N)=\N$. The $\N$-module $\mathcal H$ should be thought of as a generalisation of a commutative Fock space on the completion of $\bigoplus_{v\in M_{\mathbb Z}} \rh^{2}(\bar X_v/G_v; \N)$. Define the $\N$-bilinear integration pairing $\mathcal H\times\mathcal H\longrightarrow \N$ by 

\[\ip{\alpha}{\beta} := \int_{\coprod_{\mathbf p}\ex X^{\mathbf p}/G_ {\mathbf p}} \alpha \wedge \beta :=\sum_{\mathbf p}\frac 1{\abs {G_{\mathbf p}}}\int_{\ex X^{\mathbf p}} \alpha_{\mathbf p} \wedge \beta_{\mathbf p} \ .
\]
This integration pairing defines a graded $\N$-module homomorphism $\mathcal H\otimes\mathcal H\longrightarrow\N$.

The bounded $\N$-module $\mathcal H$ is also a commutative $\N$-algebra, with product induced from 
\[
\alpha\beta:=\frac 1{\abs{G_{\mathbf p}} \abs {G_{\mathbf q}}} \sum_{\sigma \in G_{\mathbf p+\mathbf q}}\sigma^*(\pi_1^*\alpha\wedge \pi_{2}^*\beta)\in \rh^{2\abs{\mathbf p+\mathbf q}}(\ex X^{\mathbf p+\mathbf q};\N) \ ,
\]
where $\alpha\in \rh^{2\abs{\mathbf p}}(\ex X^{\mathbf p};\N)$, $\beta \in \rh^{2\abs {\mathbf q}}(\ex X^{\mathbf q};\N)$, and $\pi_{1}$ and $\pi_{2}$ are the projections onto the factors of $\ex X^{\mathbf p+\mathbf q}=\ex X^{\mathbf p}\times \ex X^{\mathbf q}$. As this formula sends symmetric forms to symmetric forms, it induces a map
\[
\rh^*(\ex X^{\mathbf p}/G_{\mathbf p};\N) \times \rh^*(\ex X^{\mathbf q}/G_{\mathbf q};\N) \longrightarrow \rh^*(\ex X^{\mathbf p+\mathbf q}/G_{\mathbf p+\mathbf q};\N) \ .
\]
We can extend this map to a multiplication that is compatible with grading, thus endowing $\mathcal H$ with the structure of a bounded commutative $\N$-algebra.
\[
(\alpha\beta)_{\mathbf r}:=\sum_{\mathbf p+\mathbf q=\mathbf r}\frac 1{\abs{G_{\mathbf p}}\abs {G_{\mathbf q}}}\sum_{\sigma \in G_{\mathbf p+\mathbf q}}\sigma^*(\pi_1^*\alpha_{\mathbf p}\wedge \pi_{2}^*\beta_{\mathbf q})
\]

Since $\eta_{\leq 0}=0$, we can define
\[
\exp \eta:= 1+\eta+\frac 1{2!}\eta^2+\frac 1{3!}\eta^3+\dotsb \in\mathcal H \ ,
\]
which can be thought of as a partition function encoding Gromov--Witten invariants that count possibly disconnected curves, with $1\in \rh^0(\ex X^0;\N)=\N$ representing the empty holomorphic curve.

Note that multiplication by $\beta\in \mathcal H$ is a bounded $\N$-module homomorphism $\mathcal H\longrightarrow \mathcal H$. We can define a bounded $\N$-module homomorphism $\an_\beta:\mathcal H\longrightarrow \mathcal H$ that is adjoint\footnote{Note that $\an_\beta$ is adjoint using the integration pairing, which is not positive definite. In \cref{ppdef} below, we twist this integration pairing on a constrained state space to obtain a positive definite inner product $\pp{\cdot}{\cdot}$. } to multiplication by $\beta$ via
\begin{equation}\label{adef}
\left( \an_{\beta}(\alpha) \right)_{\mathbf q}:=\sum_{\mathbf p}\frac1{\abs{G_{ \mathbf p}}}(\pi_2)_!\lrb{ \alpha_{\mathbf p+\mathbf q}\wedge \pi_1^*\beta_{\mathbf p}} \ .
\end{equation}
This map is bounded and sends symmetric forms to symmetric forms, so it induces the required bounded map $\an_\beta:\mathcal H\longrightarrow\mathcal H$. One can think of multiplication by $\beta \in \mathcal H$ as analogous to a creation operator on a Fock space, while $\an_\beta$ is analogous to an annihilation operator.

The following calculation serves as a check that $\an_\beta$ is indeed adjoint to multiplication by $\beta$. For $\gamma\in \mathcal H$,
\begin{align*}
\ip{ \alpha}{\beta\gamma} &= \sum_{\mathbf p+\mathbf q}\frac 1{\abs{G_{\mathbf p+\mathbf q}}}\int_{\ex X^{\mathbf p+\mathbf q}}\alpha_{\mathbf p+\mathbf q}\wedge \frac 1{\abs{G_{ \mathbf p}}\abs{G_{ \mathbf q}}}\sum_{\sigma\in G_{\mathbf p+\mathbf q}}\sigma^*(\pi_1^*\beta_{\mathbf p}\wedge\pi_2^*\gamma_{\mathbf q})
\\
& = \sum_{\mathbf p+\mathbf q}\frac 1{\abs{G_{\mathbf p}}\abs{G_{\mathbf q}}}\int_{\ex X^{\mathbf p+\mathbf q}}\alpha_{\mathbf p+\mathbf q}\wedge \pi_1^*\beta_{\mathbf p}\wedge\pi_2^*\gamma_{\mathbf q} \\
& = \sum_{\mathbf p+\mathbf q}\frac 1{\abs{G_{\mathbf q}}}\int_{\ex X^{\mathbf q}}\frac 1{\abs{G_{\mathbf p}}}(\pi_2)_!(\alpha_{\mathbf p+\mathbf q}\wedge \pi_1^*\beta_{\mathbf p})\wedge \gamma_{\mathbf q} \\
&=\ip {\an_{\beta}(\alpha)}{\gamma} \ .
\end{align*}

Note that for $\alpha, \beta, \eta \in \mathcal H$, we have the equations
\[
\an_{\alpha\beta}=\an_\alpha \an_\beta, \quad \an_{\alpha+\beta}=\an_\alpha+\an_\beta, \quad \text{and} \quad \an_{\exp \eta}=\exp \an_{\eta}\ .
\]

\section{The constrained state space \texorpdfstring{$\mathcal H^\pm$}{H+ / H-}} \label{css}

Consider curves with contact data $\mathbf p$ supported on $\mathfrak t^2_{\mathbb Z}\setminus \{0\}$. Each connected component of such a curve is contained in a level set of $f$. If $\sum_v\mathbf p(v)v\neq 0$, then the only connected holomorphic curves in $\bar X$ with contact data $\mathbf p$ are contained in $f^{-1}(0)\subset\bar X$, because the other fibres of $f$ are toric manifolds and holomorphic curves in toric manifolds have balanced contact data that sums to $0$. Within $\bar X_v$, $f^{-1}(0)$ has complex dimension $1$, consisting of $E^+$ and $E^-$ if $v$ is a positive multiple of some leg framing $w_\ell$, and consisting of a single sphere otherwise. Accordingly, $\prod f^{-1}(0)\subset \ex X^{\mathbf p}$ is a $\abs{\mathbf p}$-dimensional holomorphic Lagrangian subvariety of $(\ex X^{\mathbf p},\Omega_{\ex X^{\mathbf p}})$, with different irreducible components depending on whether $E^+$ or $E^-$ is used within each $\bar X_\ell$. Moreover, the pushforward of $[\mathcal M_{g,\mathbf p,\beta}]$ represents a rational sum of the homology classes represented by these connected components.

Similarly, if we constrain at least one contact point to  $f^{-1}(0)$, the image of the evaluation map at the other contact points will be contained in $f^{-1}(0)$.

For a leg $\ell$, define $\mathcal H_\ell^{+}\subset\mathcal H$ and $\mathcal H^-_\ell \subset \mathcal H$ to be the completion of the $\N$--subalgebra generated by the Poincar\'{e} duals $\alpha^\pm_{k,\ell}$ to the maps $\expl E^{\pm}\longrightarrow \ex X_{kw_\ell}/G_{kw_\ell}$. There is an orthogonal basis for $\mathcal H_\ell^\pm$ defined by
\[
\alpha_\ell^{\pm\mathbf p}:=\prod_{k=1}^\infty(\alpha^\pm_{k,\ell})^{p_k} \ .
\] 
We have
\[
\ip{\alpha_\ell^{+\mathbf p}} {\alpha_\ell^{-\mathbf p}} =\abs{G_{\mathbf p}}=\prod_k k^{p_k}p_k! \qquad \text{and} \qquad \ip{\alpha_\ell^{\pm\mathbf p}}{\alpha_\ell^{\pm\mathbf p}}=\prod_k (-k)^{p_k}p_k! \ .
\]

There exists a canonical orthogonal projection to $\mathcal H_\ell^{\pm}$ given by
\begin{align*}
\mathcal H & \longrightarrow \mathcal H_\ell^{\pm} \\
\beta & \longmapsto (\beta)^\pm_\ell:= \sum_{\mathbf p}\lrb{\ip{ \beta} {\alpha_\ell^{\mp\mathbf p}} }\frac 1{\abs G_{\mathbf p}} \alpha_\ell^{\pm\mathbf p} \ .
\end{align*}
The kernel of this projection consists of $\beta$ such that $\an_{\beta}$ vanishes on $\mathcal H_\ell^\pm$, and is hence an ideal. It follows that this projection is a graded $\N$-algebra homomorphism. Note that restricted to $\mathcal H_\ell^\mp$, this projection defines an isomorphism from $\mathcal H_\ell^\mp$ to $\mathcal H^\pm_\ell$ sending $\alpha^\mp_{k,\ell}$ to $-\alpha^{\pm}_{k,\ell}$.

Similarly, define $\mathcal H^\pm\subset \mathcal H$ to be the completion of the subalgebra generated by $\mathcal H^\pm_\ell$ for all legs $\ell$. In particular, 
\[
\mathcal H^\pm=\bigotimes_\ell \mathcal H^{\pm}_\ell\ . 
\]

There is also a canonical projection $\mathcal H\longrightarrow\mathcal H^\pm$ which is a graded $\N$-algebra homomorphism given by 
\begin{equation} \label{-projdef}
\beta\mapsto (\beta)^\pm:=\sum_{\{\mathbf p^\ell\}}\lrb{ \ip{\beta} {\prod_\ell \alpha^{\mp\mathbf p^\ell}_\ell }} \prod_\ell \frac 1{\abs {G_{\mathbf p^\ell}}}\alpha^{\pm\mathbf p^\ell}_\ell
\end{equation}
and a similarly defined projection $\mathcal H \longrightarrow\bigotimes_{\ell \in I} \mathcal H^\pm_\ell$ given by
\begin{equation}\label{projdef2}
\beta \longmapsto (\beta)^\pm_I \ .
\end{equation}
Here, $I$ is any subset of legs, with $(\beta)^\pm_I$ defined using \cref{-projdef} with the sum restricted to only using $\mathbf p^\ell$ for $\ell \in I$.

As well as the orthogonal projection from $\mathcal H^\pm$ to $\mathcal H^\mp $, there is another natural graded $\N$-algebra isomorphism
\[
\text{Conj}:\mathcal H^\pm\longrightarrow \mathcal H^\mp
\]
such that 
\[
\text{Conj}(\alpha^{\pm\bf p})=\alpha^{\mp\bf p} .
\]
Composing this isomorphism with the projection $\mathcal H^\mp\longrightarrow \mathcal H^{\pm}$ gives an involution that acts a little like complex conjugation. In fact, \cref{translation} gives an isomorphism between $\mathcal H^+_\ell\otimes \mathbb C$ and the infinite wedge space. Under this isomorphism, our involution corresponds to the anti-complex involution preserving the standard orthonormal basis for the infinite wedge space. Twisting the integration product by this involution gives an important positive definite metric on $\mathcal H^{\pm}$ 
\begin{equation} \label{ppdef}
\pp{\alpha}{\beta}:=\ip{\text{Conj}(\alpha)}{\beta} \ .
\end{equation}
This pairing is symmetric and positive definite. In particular, $\alpha_\ell^{\pm\mathbf p}$ form an orthogonal basis for $\mathcal H^{\pm}_\ell$, with
\[
\pp{\alpha_\ell^{\pm\mathbf p}} {\alpha_\ell^{\pm\mathbf p}} =\abs{G_{\mathbf p}}=\prod_k k^{p_k}p_k! \ .
\]

We encode Gromov--Witten invariants counting possibly disconnected curves constrained to $E^+_\ell$ in a partition function $Z_{\bar X}\in \mathcal H^-$, by projecting $\exp \eta$ to $\mathcal H^-\subset\mathcal H$.
\begin{equation} \label{Zdef}
Z_{\bar X}:=(\exp\eta)^-=\exp\lrb{\sum_{\{\mathbf p^\ell\},g,y}n_{\{\mathbf p^\ell\},g,y}\hbar^{2g-2} t^y\prod_\ell\hbar^{\abs{\mathbf p^\ell}}\alpha^{-\mathbf p^\ell}_{\ell}}
\end{equation}

Where, in the above, $n_{\{\mathbf p^\ell\},g,y}\in\mathbb Q$ is the Gromov--Witten invariant counting connected holomorphic curves with genus $g$, $\omega$-energy $y$, and contact data $\{\mathbf p^\ell\}$, always constrained to $E^+_\ell$ in $\bar X_\ell$, and not contacting any other component of the divisor.

Given $\beta\in \mathcal H$, define $Z_{\bar X,\beta}\in \mathcal H^-$ as the projection of $\an_\beta\exp\eta$, where $\an_\beta$ is the integration-pairing adjoint to multiplication by $\beta$ defined in \cref{adef}.
 \begin{equation} \label{Zbetadef}
Z_{\bar X,\beta}:=(\an_\beta\exp \eta)^-=\sum_{\{\mathbf p^\ell\}}\lrb{\ip{ \exp\eta}{\beta \prod_\ell \alpha_\ell^{+\mathbf p^\ell}}}\prod_\ell \frac 1{\abs{G_{\mathbf p^\ell}}}\alpha^{-\mathbf p^\ell}_\ell
\end{equation} 
This is a kind of partition function counting holomorphic curves constrained to $E^+_\ell$ at some contact points, and constrained using $\beta$ at the remaining contact points. The map $\beta \mapsto Z_{\bar X,\beta}$ is a bounded $\N$-module homomorphism $\mathcal H\longrightarrow\mathcal H^-$.

Given any $\alpha\in \mathcal H$, and a subset $I$ of legs, $\alpha$ induces a bounded $\N$-module homomorphism
\begin{equation} \label{opdef}
\op \alpha{I}:\mathcal H\longrightarrow \bigotimes_{\ell \in I} \mathcal H^-_\ell
\end{equation}
defined by 
\[
\op\alpha I (\beta)=(\an_\beta \alpha)^-_I \ .
\]

We will use this to interpret various Gromov--Witten invariants as operators; for example, if we divide legs into outgoing legs in $I$, and incoming legs in $I'$, restricting $\op {Z_{\bar X}} I$ to $\bigotimes_{\ell'\in I'} \mathcal H^+_{\ell'}$ defines a bounded $\N$-module homomorphism
 \[ \op {Z_{\bar X}} I:\bigotimes_{\ell'\in I'} \mathcal H^+_{\ell'}\longrightarrow \bigotimes_{\ell\in I} \mathcal H^-_{\ell}\]

\section{The gluing formula for the partition function \texorpdfstring{$Z_{\bar X}$}{Z}} \label{gfs}

Given a toric degeneration of $\bar X$ into $\bigcup _i\bar X_i$, there is a simple gluing formula for $Z_{\bar X}$ in terms of $Z_{\bar X_i}$, reformulating the gluing formula from~\cite{LLLZ,AKMV}. In particular, the legs of the toric graph of $\coprod_i \bar X_i$ consist of legs from $\bar X$, and matched pairs of new legs, $k^+$ and $k^-$ where some $\bar X_i$ meets some $\bar X_j$ in a stratum $(\bar X_i)_{k^+}=(\bar X_j)_{k^-}$. At such a matched pair of legs, the framings are opposite: $w_{k^-}=-w_{k^+}$. Hence, the homology classes we use to define $\mathcal H^\pm_{k^{\pm}}$ are opposite: $[E_{k^-}]=-[E_{k^+}]$ and $\alpha^+_{k^-}=\alpha^-_{k^+}$. With this identification of $\mathcal H^\mp_{k^\pm}$, the integration pairing gives a natural graded $\N$-module homomorphism
\[\mathcal H^-_{k^+}\otimes \mathcal H^-_{k^-}\longrightarrow \N \] 
\[\alpha^{-\bf p}_{k^+}\otimes \alpha^{-\bf q}_{k^-}\mapsto \ip {\alpha^{-\bf p}_{k^+}}{\alpha^{+\bf q}_{k^+}} =\pp {\alpha^{-\bf p}_{k^+}}{\alpha^{- \bf q}_{k^+}} \]
and these homomorphisms then induce a graded $\N$-module homomorphism
\[
\pi: \bigotimes_\ell \mathcal H^-_\ell \bigotimes_k \Big( \mathcal H^-_{k^+} \otimes \mathcal H^-_{k^-} \Big) \longrightarrow \bigotimes_\ell \mathcal H^-_\ell \ .
\] 
The gluing formula for $Z_{\bar X}$ is then simply
\begin{equation} \label{glue}
Z_{\bar X} = \pi \bigg( \bigotimes_iZ_{\bar X_i} \bigg) \ .
\end{equation}

The proof of \cref{glue} can be found in~\cite{LLLZ}. One can also see~\cite{Ranganathan, acgsdegeneration, Tehrani, kim-lho-ruddat, mandel-ruddat} for different approaches to such a gluing formula that use different formalisms to define relative Gromov--Witten invariants. However, given our geometric setup, \cref{glue} can also be deduced from the tropical gluing formula of~\cite[Equation~(1)]{gfgw}. We reproduce the formula here for the convenience of the reader, since we will invoke it at various times below.
\begin{equation} \label{eq:tgf}
\eta \tc \gamma = \frac{k_\gamma}{|\Aut \gamma|} \iota_!^{[\gamma]} \Delta^* \prod_v \eta^{[\gamma_v]}
\end{equation}
Here, $\eta$ represents a Gromov--Witten invariant and the notation $\tc \gamma$ indicates the contribution of a tropical curve $\gamma$ to this invariant. The term $\eta^{[\gamma_v]}$ represents a relative Gromov--Witten invariant associated to the vertex $v$ of $\gamma$. The right side takes the form of a `pull-push formula', which one can think of as elementary instructions for gluing together these relative invariants. The prefactor on the right side is essentially combinatorial in nature, taking into account edge multiplicities and symmetries of the tropical curve $\gamma$. Rather than describe the tropical gluing formula in full detail and generality, we explicitly identify all elements of the formula required for our purposes below, particularly in the proof of \cref{zc}. As a word of warning, we flag the fact that \cref{eq:tgf} cannot be used verbatim in our context. A minor adjustment to the combinatorial factor needs to be made because the evaluation stacks in the present paper are quotients of the evaluation spaces used in \cite{gfgw}.

The following is a brief sketch of how the gluing formula of \cref{glue} for the partition function $Z_{\bar X}$ can be deduced from the tropical gluing formula of \cref{eq:tgf}. Exploding the toric degeneration provides a smooth family of exploded manifolds containing $\expl (\bar X,D)$ and an exploded manifold $\ex X$ with smooth part the union of the $\bar X_i$. The tropical gluing formula of \cref{eq:tgf} provides a gluing formula for $\eta$ as a sum over tropical curves in the tropical part $\totb{ \ex X}$ of $\ex X$, and this tropical gluing formula implies a slightly simpler gluing formula for $\exp\eta$, where it is not necessary to keep track of how tropical curves are connected together. Analogously to the tropical part of $\expl (\bar X,D)$, we can put a global $\mathbb Z$-affine structure on $\totb{\ex X}$ with singular locus a graph with a vertex for each $\bar X_i$, an internal edge in direction $w_{k^+}$ for each matched pair of new legs $k^\pm$, and singular rays in the directions $w_\ell$, as in the tropical part of $\expl(\bar X,D)$. For computing the projection of $\exp\eta$ to $\mathcal H^-$ and hence $Z_{\bar X}$, constrain our curves to (the equivalent of) $E^+_\ell$. The only tropical curves with nonzero contribution consist of tropical curves with image contained in the singular locus, and all vertices at vertices of the singular locus, and the formula for the contribution of all such curves is analogous to \cref{glue}, except the pairing between $\mathcal H^-_{k^+}$ and $\mathcal H^-_{k^-}$ is replaced by integration over the relevant evaluation space without first projecting to $\mathcal H^-$. This still gives rise to the same formula as \cref{glue}, because once we have constrained our curves to $E^{+}_\ell$, the cohomology classes we have to integrate are contained in $\mathcal H_{k^\pm}^-\otimes\mathcal H_{k^\pm}^+$, and orthogonal projection to $\mathcal H^-$ then does not affect the integration pairing.

\section{Analysis of the empty tropical graph} \label{toriccase}

In the case that $X=\mathbb C\times (\mathbb C^*)^2$, we may take the function $f$ to be the first coordinate, and choose $\bar X$ as $(\mathbb{CP}^1)^3$, or the product of $\mathbb{CP}^1$ with any toric compactification of $(\mathbb C^*)^2$. In this case, the toric graph is empty. As there is a toric structure on $\mathbb{CP}^1$ such that $f-1$ is a primitive toric monomial, $((\mathbb{CP}^1)^3,D)$ is isomorphic to $(\mathbb{CP}^1)^3$ with its toric boundary divisor. Relative Gromov--Witten invariants of such three-dimensional toric manifolds are calculated using a tropical gluing fomula in~\cite{3d}.

For $v\in M_{\mathbb Z}=\mathfrak t^3_{\mathbb Z}\setminus \{0\}$, and $\theta\in(\mathfrak t^3_{\mathbb Z})^*$ such that $\theta(v)=0$, there is a distinguished class $h_{v,\theta}\in \mathcal H$ defined as follows: First, blow up $(\mathbb{CP}^1)^3$ using toric blowups until $v/{\abs v}$ corresponds to a codimension $1$ toric boundary stratum $S$, and the monomial $z^\theta$ extends to a holomorphic map to $\mathbb{CP}^1$. Then $h_{v/\abs v,\theta}$ is the pullback of the Poincar\'{e} dual to a point in $\mathbb{CP}^1$ using $z^\theta\rvert_{S}:S\longrightarrow \mathbb{CP}^1$. As $\expl S$ is a refinement of $\ex X_v$, we have that $h_{v/\abs{v},\theta}$ defines a class in the refined cohomology of $\ex X_v$. Define $h_{v,\theta}\in\mathcal H$ to be the pushforward of this class to $\ex X_v/G_v$. Note that $h_{v,k\theta}=\abs kh_{v,\theta}$.

For $\alpha\in\mathcal H$ a product of these classes $h_{v,\theta}$, we can write some examples of $Z_{(\mathbb{CP}^1)^3,\alpha}$. Apart from the exponent of $t$ that records $\omega$-energy, these Gromov--Witten invariants do not depend on which three-dimensional toric manifold is used.  Moreover, the exponent of $t$ has a particularly simple dependence on the contact data in the case of $(\mathbb{CP}^1)^3$. Suppose that the $\omega$-area of the $k$th copy of $\mathbb{CP}^1$ is $x_k$ and, for $v \in \mathfrak t^3_\mathbb Z=\mathbb Z^3$, define $2x(v)=x_1\abs v_1+x_2\abs v_2+x_3\abs v_3$. Then the $\omega$-energy of a curve with contact data $\mathbf p$ is $\sum_v\mathbf p(v)x(v)$.

We have the equations
\[
Z_{(\mathbb{CP}^1)^3}=1\in\N \qquad \text{and} \qquad Z_{(\mathbb{CP}^1)^3,h_{v,\theta}}=0 \ ,
\]
which reflect that the virtual count of curves with empty or unbalanced contact data is $0$. One can also observe that the moduli space of genus zero curves with exactly two contact points consists of a compactification of the space of monomial maps $\mathbb C^*\longrightarrow (\mathbb C^*)^3$, and that the corresponding virtual moduli space of higher genus curves vanishes. This gives the equation
\begin{equation} \label{toric2leg}
Z_{(\mathbb{CP}^1)^3,h_{v,\theta}h_{w,\gamma}}=\delta_{v+w}\abs{\theta\wedge\gamma}t^{x(v)+x(w)} \ ,
\end{equation}
where $\abs{\theta\wedge\gamma}$ indicates the smallest nonnegative integer $k$ such that the integral vector $\theta\wedge\gamma$ is $k$ times a primitive integral vector.

The virtual moduli space of curves with three contact points does contain interesting contributions from higher genus curves. The calculation of these contributions appears in \cite[Theorem~1.1]{3d} and leads to the equation
\[
Z_{(\mathbb{CP}^1)^3,h_{u,\beta}h_{v,\theta}h_{w,\gamma}}=\delta_{u+v+w}\abs{\beta\wedge\theta\wedge\gamma}t^{x(u)+x(v)+x(w)}2\sin\lrb{\abs{ u\wedge v}\hbar/2} \ .
\]
The result of \cite[Theorem~1.1]{3d} furthermore implies that
\begin{equation} \label{3dvertex}
\an_{h_{v,\theta}h_{w,\gamma}} \eta = t^{x(v)+x(w)+x(-v-w)} 2\sin(\abs{v\wedge w} \hbar/2) \, h_{-v-w,\iota_{v+w}(\theta\wedge\gamma)} + \dotsb \ ,
\end{equation}
where the missing terms count curves with at least four contact points.

Observe that the two previous equations involve factors of
\[
2\sin(n \hbar/2):=n\hbar- \frac {n^3}{2^2 \, 3!}\hbar^3+\frac{n^5}{2^4 \, 5!}\hbar^5-\dotsb \in \N \ ,
\]
which will occur regularly in this work. So we make the slightly unconventional definition\footnote{The notation $[n]_q$ is often used for `quantum integers', for which there are various definitions. The appearance of the factor $-i$ makes our definition unconventional, although convenient for the current setting. The choice of $q^{1/2}$ here differs from the $q^{1/2}$ appearing in the Gromov--Witten/Donaldson--Thomas correspondence by a factor of $i$~\cite{gwdt}. We expect that there is a parallel story involving the relative Donaldson--Thomas invariants defined by Maulik and Ranganathan in place of relative Gromov--Witten invariants~\cite{ldt}.}
\begin{equation} \label{eq:qinteger}
q^{1/2}:=e^{i\hbar/2}\in \N \qquad \text{and} \qquad [n]_q := -i( q^{n/2}-q^{-n/2}) =2\sin(n \hbar/2)\in \N \ .
\end{equation}

\section{Analysis of the tropical graph with no vertices} \label{sec:product}

In the case that $X=\mathbb C^2\times \mathbb C^*$, we can take the function $f$ to be $z_1z_2$. In this case, the toric graph has two legs $\ell_1$ and $\ell_2$, and consists of a single edge with no vertices. The simplest case is when the two legs have opposite framing. Then, we can take $\bar X$ to be the product of a toric compactification of $\mathbb C^2$ with $\mathbb{CP}^1$. Note that not any toric compactification of $\mathbb C^2$ will do, because $z_1z_2$ must extend to a meromorphic function on this compactification. A concrete example of such a compactification is the blowup $Y$ of $(\mathbb{CP}^1)^2$ at the points $(0,\infty)$ and $(\infty,0)$; another example is drawn in \cref{xlpic}.

Consider a connected holomorphic curve in $Y \times \mathbb{CP}^1$ that only touches the boundary divisor in $Y\times\{0,\infty\}$. All such curves have image in a $\mathbb{CP}^1$ fibre. Accordingly, $Z_{Y\times\mathbb{CP}^1}$ is simple to compute and we obtain
\[
Z_{Y\times \mathbb{CP}^1}=\exp \sum_{k=1}^{\infty} \frac {t^{kx}}{k}\alpha^-_{k,\ell_1}\alpha^-_{k,\ell_2}=\sum_{\mathbf p} \frac{t^{x(\mathbf p)}}{\abs{G_{\mathbf p}}}\alpha_{\ell_1}^{-\mathbf p}\alpha_{\ell_2}^{-\mathbf p} \ ,
\]
where $x$ is the $\omega$-area of $\mathbb{CP}^1$ and $x(\mathbf p) = \sum_k xp_k$. 

The corresponding Gromov--Witten invariants counting curves with constraints $\bf p$ at $\ell_1$ and $\bf q$ at $\ell_2$ are given by 
\[
\ip {Z_{Y\times \mathbb CP^1}} {\alpha_{\ell_1}^{+\bf p} \alpha_{\ell_2}^{+\bf q}}= t^{x(\bf p)}\abs{G_{\bf p}}\delta_{\bf p,\bf q} =t^{x(\bf p)}\pp {\alpha_{\ell_1}^{+\bf p}}{\alpha_{\ell_1}^{+\bf q}} \ .
\]

 By thinking of $\ell_1$ as incoming and $\ell_2$ as outgoing, we get a bounded $\N$-module homomorphism 
\[
\Prop_x:\mathcal H^+_{\ell_1}\longrightarrow H^-_{\ell_2} \ ,
\]
defined as the restriction of $\op{Z_{Y\times\mathbb{CP}^1}}{\ell_2}$ to $\mathcal H^+_{\ell_1}$; where \cref{opdef} is used to define $\op{Z_{Y\times\mathbb{CP}^1}}{\ell_2}$ so, for $\alpha\in \mathcal H_{\ell_1}^+$, $\Prop_x\alpha$ is the projection of $Z_{Y\times \mathbb{CP}^1,\alpha}$ to $\mathcal H^-_{\ell_2}$. In particular, 
\[
\Prop_x\alpha_{\ell_1}^{+\mathbf p}=t^{x(\mathbf p)}\alpha_{\ell_2}^{-\mathbf p} \ .
\]

There is a canonical identification of $\mathcal H^{\pm}_{\ell_1}$ with $\mathcal H^{\mp}_{\ell_2}$ from identifying $Y\times 0$ with $Y\times \infty$, so we can also think of $\Prop_x$ as a bounded $\N$-module automorphism of $\mathcal H^+_{\ell_1}$. In particular, this defines a bounded $\N$-module automorphism 
\begin{align*}
\Prop_{x,\ell}:\mathcal H^+_\ell &\longrightarrow \mathcal H^+_\ell \\
\alpha^{+\mathbf p}_\ell &\longmapsto t^{\sum_{k=1}^\infty p_k kx}\alpha^{+\mathbf p}_\ell \ .
\end{align*}
This operator $\Prop_{x,\ell}$ can be thought of as a kind of propagation operator. The equation $\Prop_{x,\ell} \circ \Prop_{y,\ell} = \Prop_{x+y,\ell}$ is an immediate calculation, but also follows from the gluing formula of \cref{glue}.

\section{The algebra of operators \texorpdfstring{$\W_{v,\ell}$}{W}} \label{sec:woperators}

Suppose that $v\in\mathfrak t^2_{\mathbb Z}$ is a primitive integral vector, and let $h_v$ be some class in $\rh^2(\ex X_v)$ such that 
\begin{equation} \label{hvdef}
\int_{f^{-1}(0)}h_v=1 \text{ and, if $v=w_\ell$, } \ip{\alpha_\ell^+}{ h_v}=0\ .
\end{equation}
Similarly, if $v$ is not a primitive vector, define $h_v$ to be the pushforward of such a class $h_{v/\abs v}$ to $\ex X_v/G_v$. The classes $h_{v,\theta}$ in the case $X=\mathbb C \times (\mathbb C^*)^2$ from \cref{toriccase} are examples of such a class when $v\in\mathfrak t^2_{\mathbb Z}$ and $\theta\in (\mathfrak t^3)^*$ is primitive and vanishes on $v$ and the vector $v_f$ corresponding to the component $f^{-1}(1)$ of the divisor. 

In calculations below, we will use that, when $-v=w_{\ell_2}=-w_{\ell_1}$,

\[ \ip{\alpha^+_{k,\ell_1}}{h_{-kv}}=k \ \ \ \text{and } \ip {\alpha^+_{k,\ell_2}}{h_{-kv}}=0 \ . \]

Note that $Z_{\bar X,h_v}$ does not depend on the particular choice of such an $h_v$, because holomorphic curves otherwise constrained to $E^+_{\ell_i}$ with one further contact point corresponding to $v$ are contained in $f^{-1}(0)$, so the cohomology class representing the pushforward of the constrained moduli space to $\ex X_v$ is some linear combination of the Poincar\'{e} dual to $f^{-1}(0)$ and $\alpha_\ell^+$.

Recall the notation from \cref{adef,Zbetadef,opdef}. For $\bar X=Y\times \mathbb{CP}^1$, the operator $\op{Z_{Y\times\mathbb{CP}^1,h_{-v}}}{\ell_2}$ defines a bounded map of $\N$-modules
\begin{align*} 
\widetilde W_{v}:\mathcal H^+_{\ell_1} & \longrightarrow \mathcal H^{-}_{\ell_2} \\
\alpha & \longmapsto (\an_{\alpha}Z_{Y\times\mathbb{CP}^1,h_{-v}})_{\ell_2}^- \ .
\end{align*}
Note the use of $-v$ to indicate that this contact point is thought of as incoming. This map $\widetilde W_v$ depends on the symplectic form chosen on $Y\times\mathbb{CP}^1$, however this dependence is straightforward to calculate. Setting the symplectic form to $0$, and identifying $\mathcal H^-_{\ell_2}$ with $\mathcal H^+_{\ell_1}$ by identifying $Y\times 0$ with $Y\times \infty$, we get a canonical bounded $\N$-module homomorphism
\[
W_{v}:\mathcal H^+_{\ell_1}\longrightarrow \mathcal H^+_{\ell_1} \ . 
\]

To see that $W_{v}$ is bounded, note that 
\[
W_{v}\alpha_{\ell_1}^{+\mathbf p}=\sum_{\mathbf q,\chi}\frac{n_{\mathbf p,\mathbf q,-v,\chi}}{\abs{G_{\mathbf q}}}\hbar^{-\chi}\alpha_{\ell_1}^{+\mathbf q} \ ,
\]
where $n_{\mathbf p,\mathbf q,-v,\chi}\in\mathbb Q$ counts the number of possibly disconnected holomorphic curves in $Y\times\mathbb{CP}^1$ with Euler characteristic $\chi$ and contact data $\mathbf p$ with $Y\times 0$, contact data $\mathbf q$ at $Y\times \infty$, and one extra contact point determined by $-v$, suitably constrained. Given a positive symplectic form $\omega$, the $\omega$-energy of such curves is entirely determined by $\mathbf p$ and $v$, so Gromov--compactness implies that this is a finite sum once $\chi$ is fixed. Moreover, $\chi$ is bounded above by $1$, because, in the above count, the only possible connected stable holomorphic curve with positive Euler characteristic is a sphere with a unique contact point given by $v$. Accordingly, $W_v$ is a bounded $\N$-module automorphism.

In the following, we define an operator
\[
\W_{v,\ell}:\mathcal H^+_\ell \longrightarrow \mathcal H^+_\ell
\]
for each $v\in \mathfrak t^2_{\mathbb Z} \setminus \{0\}$. Depending on the arrangement of $v$ relative to the leg $\ell$, we change the sign of $W_v$ to define $\W_{v,\ell}$. We do this to ensure that the operators $\W_{v,\ell}$ satisfy the commutation relation from \cref{Wcommutation}, and so that framing changes are compatible with the action of these operators, especially in the case depicted in  \cref{f14}.

Associated to the leg $\ell$, we have have the framing $w_\ell \in \mathfrak t^2_{\mathbb Z}$, and also a canonical normal vector $n_\ell \in \mathfrak t^2_{\mathbb Z}$ such that the leg travels in the direction annihilated by $n_\ell$, and $(w_\ell, n_\ell)$ forms an oriented basis for $\mathfrak t^2_{\mathbb Z}$. We can write $v$ in this basis, using the notation
\[
v=\frac{v\wedge n_\ell}{w_\ell \wedge n_\ell}w_\ell + \frac{w_\ell \wedge v}{w_\ell \wedge n_\ell}n_\ell := \lrb{v\wedge n_\ell}w_\ell + \lrb{w_\ell \wedge v} n_\ell \ .
\]
Then define $\W_{v,\ell}$ as follows.
\begin{equation} \label{wdef}
\W_{v,\ell}:=\begin{cases}W_{v}, & \text{ if }{w_\ell \wedge v}>0 \text{ or } v= -kw_\ell \text { with }k>0
\\ -W_{v}, & \text{ if }{w_\ell \wedge v}<0 \text{ or } v= kw_\ell \text { with }k>0\end{cases}
\end{equation}

\begin{remark}\label{W identification} Given two legs $\ell_1$ and $\ell_2$ and an integral matrix $A$ such that
\[Aw_{\ell _1}=w_{\ell_2}\ \ \ \ \text{ and } An_{\ell_1}=n_{\ell_2}\]
There is a natural isomorphism 
\[
I_A: \mathcal H^+_{\ell_1}\longrightarrow \mathcal H^+_{\ell_2}
\] 
such that
\[
I_A(\alpha^{+\bf p}_{\ell_1})=\alpha^{+\bf p}_{\ell_2} \ .
\]
This isomorphism respects all the structure of $\mathcal H^+_{\ell}$, including the action of $\W_{v,\ell}$ 
\[
I_A\circ \W_{v,\ell_1}=\W_{Av, \ell_2} \circ I_A \ .
\]
Note that in the case that $\ell_1$ and $\ell_2$ are the two legs from $Y\times\mathbb CP^1$, this natural identification is different from the identification used above, which instead identifies $\mathcal H^+_{\ell_1}$ with $\mathcal H^-_{\ell_2}$ because the two copies of $Y$ over $0$ and $\infty$ have opposite holomorphic volume forms.
\end{remark}

\begin{lemma} \label{zc}
Suppose that $v_1$ and $v_2$ satisfy
\[
{w_{\ell_1}\wedge v_1}>0, \qquad {w_{\ell_1}\wedge v_2}>0, \qquad v_1 \wedge v_{2} \geq 0.
\]
Then
\[
\op{Z_{Y\times\mathbb{CP}^1,h_{-v_1}h_{-v_2}}}{\ell_2}\rvert_{\mathcal H^{+}_{\ell_1}}=\Prop_x\circ t^{c_1}\W_{v_1,\ell_1}\circ t^{c_2}\W_{v_2,\ell_1},
\]
where $x$, $c_1$ and $c_2$ are given by the formulas
\begin{align*}
x &= \int_{\mathbb{CP}^1}\omega \\
c_i &= \bigg( \int_{E^-_{\ell_1}}\omega \bigg) {w_{\ell_1}\wedge v_i}+ \bigg( \int_{\mathbb{CP}^1} \omega \bigg) \max\lrb{0,{v_i\wedge n_{\ell_1}}}.
\end{align*}
So, for $\alpha$ in $\mathcal H^+_{\ell_1}$ and $\beta\in\mathcal H^+_{\ell_2}$,
\[
\ip{ Z_{Y\times\mathbb{CP}^1,h_{-v_1}h_{-v_2}}}{ \beta \alpha }=t^{c_1+c_2} \ip{ Z_{Y\times\mathbb{CP}^1}}{ \beta \W_{v_1,\ell_1} \W_{v_2,\ell_1} \alpha} \ .
\]
\end{lemma}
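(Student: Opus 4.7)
The plan is to apply the gluing formula \cref{glue} to a toric degeneration of $\bar X = Y \times \mathbb{CP}^1$ that separates the insertions $h_{-v_1}, h_{-v_2}$ into distinct pieces, then identify the resulting factors with $\Prop$ and $\W$ operators from \cref{sec:product,sec:woperators}.

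First, I would degenerate $\bar X$ by subdividing its $\mathbb{CP}^1$ factor into three copies with symplectic areas $x_1, x_2, x_3$ summing to $x = \int_{\mathbb{CP}^1}\omega$, yielding three copies $\bar X_1, \bar X_2, \bar X_3$ of $Y \times \mathbb{CP}^1$ glued in a chain along $Y$-divisors, with $\ell_1$ surviving on $\bar X_1$ and $\ell_2$ on $\bar X_3$. I would then assign $h_{-v_2}$ to $\bar X_1$ and $h_{-v_1}$ to $\bar X_2$, leaving $\bar X_3$ uninserted. The ordering of insertions is dictated by the right-to-left reading of $\W_{v_1,\ell_1}\circ\W_{v_2,\ell_1}$ in the target formula. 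Applying \cref{glue} gives
\[
\op{Z_{\bar X, h_{-v_1}h_{-v_2}}}{\ell_2}\rvert_{\mathcal H^+_{\ell_1}} = \op{Z_{\bar X_3}}{\ell_2} \circ \op{Z_{\bar X_2, h_{-v_1}}}{\ell_2} \circ \op{Z_{\bar X_1, h_{-v_2}}}{\ell_2},
\]
where the first factor equals $\Prop_{x_3}$ by \cref{sec:product}, and the other two are single-insertion operators $\widetilde W_{v_i}$ from \cref{sec:woperators} computed with the symplectic form of the respective piece.

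Next, I would extract the $\omega$-dependence from each $\widetilde W_{v_i}$, expressing it as $t^{E_i}W_{v_i}$ modulo propagation shifts, where $W_{v_i}$ is the $\omega=0$ version. The $\omega$-area of a curve contributing to $\widetilde W_{v_i}$ is determined by its homology class, which in turn is fixed by the contact data and the insertion $v_i$. Using the decomposition $v_i = (v_i \wedge n_{\ell_1})w_{\ell_1} + (w_{\ell_1} \wedge v_i)n_{\ell_1}$ and intersection theory in $Y \times \mathbb{CP}^1$, such a curve has $[E^-_{\ell_1}]$-coefficient equal to $w_{\ell_1} \wedge v_i$ and $[\mathbb{CP}^1]$-fiber coefficient equal to $\max(0, v_i \wedge n_{\ell_1})$. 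These produce the two terms in $c_i$: $(\int_{E^-_{\ell_1}}\omega)(w_{\ell_1}\wedge v_i)$ and $(\int_{\mathbb{CP}^1}\omega)\max(0, v_i\wedge n_{\ell_1})$, with the $\max$ arising because only curves with non-negative winding in the $\mathbb{CP}^1$-direction exist for the given contact data.

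Finally, I would commute the residual $\Prop$-operators past the $W$-operators, using that $\Prop_{y,\ell}$ intertwines $W_v$ up to a $t$-shift that is absorbed into the $c_i$, and consolidate the three propagation contributions into a single $\Prop_x$ factor on the left. Applying the sign convention of \cref{wdef} then gives $\W_{v_i, \ell_1} = W_{v_i}$ under the hypothesis $w_{\ell_1} \wedge v_i > 0$, completing the identification. The hard part will be the accurate bookkeeping of $t$-factors, particularly verifying the homology class computation for the contributing curves; the hypothesis $v_1 \wedge v_2 \geq 0$ enters precisely to guarantee that the tropical configuration placing the insertions in this specific order is the consistent one, so that no extra tropical curves (which would necessitate correction terms like a commutator $[\W_{v_1},\W_{v_2}]$) are missed.
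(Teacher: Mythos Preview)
Your overall strategy --- degenerate, identify the pieces with $\Prop$ and $\W$, then track the $t$-powers --- is exactly the shape of the paper's argument, and your reading of the role of the hypothesis $v_1\wedge v_2\geq 0$ (preventing an extra trivalent vertex off the singular locus, hence an unwanted commutator term) is correct. But there is a real gap in the step where you write ``Applying \cref{glue} gives \ldots''. The formula \cref{glue} is stated only for the partition function $Z_{\bar X}$ with leg constraints in $\mathcal H^\pm$; it says nothing about how extra insertions $h_{-v_1}h_{-v_2}$ distribute across the pieces of a degeneration. A priori one must sum over all placements of the two contact points among $\bar X_1,\bar X_2,\bar X_3$, and there is no mechanism in \cref{glue} itself that singles out the assignment you want.

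The paper closes this gap by working one level down, with the tropical gluing formula \cref{eq:tgf}, and by making an explicit choice of refined representatives for $h_{-v_i}$ whose tropical support forces the ray in direction $-v_i$ to hit the singular locus at a prescribed point $p_i$. One then has to argue that the only contributing tropical curves are those with vertices at $p_0,p_1,p_2$ and edges along the singular line or along these two rays: non-rigid curves give zero, and curves with a vertex connected to the singular locus by two distinct paths are killed because the self-intersection of $f^{-1}(0)$ vanishes. This tropical analysis is what replaces your unjustified appeal to \cref{glue}, and it is also where $v_1\wedge v_2\geq 0$ actually enters (it lets you choose the two rays non-crossing with $p_2$ closer to $\ell_1$). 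Once that is done, the identification with $\Prop_x\circ t^{c_1}\W_{v_1,\ell_1}\circ t^{c_2}\W_{v_2,\ell_1}$ and the computation of $c_i$ proceed essentially as you outline; the paper places $p_1,p_2$ directly on the $\ell_1$-ray, which also saves you the extra bookkeeping of commuting intermediate $\Prop$-factors past the $W$-operators.
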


\begin{proof}
The exponent of $t$ is straightforward to verify, as it is determined topologically. The remainder of the proof involves calculating Gromov--Witten invariants using the tropical gluing formula.

For $\alpha\in\mathcal H^+_{\ell_1}$, note that
\[
\op{Z_{Y\times\mathbb{CP}^1,h_{-v_1}h_{-v_2}}}{\ell_2}(\alpha)=\op{\exp \eta}{\ell_2}(h_{v_1}h_{v_2}\alpha) \ ,
\]
because all the holomorphic curves contributing to this count are contained in $f^{-1}(0)$. The tropical gluing formula of \cref{eq:tgf} then allows us to compute this Gromov--Witten invariant as a sum over the contributions of tropical curves.

We are free to choose representatives of $h_{-v_i}$ to constrain these tropical curves to appear as in \cref{zcalc}. In particular, the tropical part of $\ex X_{-v_i}$ parametrises the space of infinite rays in the tropical part of $\expl (Y\times\mathbb{CP}^1,D)$ travelling in the direction $-v_i$, with two rays identified if they eventually coincide. We can choose refined forms representing $h_{-v_i}\in \rh(\ex X_{-v_i})$ so that the tropical part of the support of $h_{-v_i}$ is a 1-dimensional linear subspace in this space of rays, intersecting the space of rays emanating from the singular locus at a single point, which we can choose where we like. Our conditions on the vectors $v_i$ ensure that both these rays are on one side of the singular locus, and that we can choose constraints so that these rays do not intersect, and the second ray intersects the singular locus closer to $\ell_1$ than the first ray.

\begin{figure}[ht!]
\centering
\begin{tikzpicture}
\begin{scope}
\fill [box style] (0,0) rectangle (4,2);
\draw[wall style] (0,1) -- (4,1);
\draw[curve style] (2,0) -- (2,1);
\draw[curve style] (0.5,0) -- (1.5,1);
\draw[wallcurve style] (0,1) -- (4,1);
\node[below, xshift=7.0710] at (0.5,0) {$v_1$};
\node[below, xshift=5] at (2,0) {$v_2$};
\node[above left, xshift=2, yshift=5] at (4,1) {$n_{\ell_1}$};
\node[below right] at (4,1) {$w_{\ell_1}$};
\node[above left] at (0,1) {$w_{\ell_2}$};
\node[below right, xshift=-2, yshift=-5] at (0,1) {$n_{\ell_2}$};
\draw[->, thick, xshift=5] (4,1) -- (4.5,1);
\draw[->, thick, yshift=5] (4,1) -- (4,1.5);
\draw[->, thick, xshift=-5] (0,1) -- (-0.5,1);
\draw[->, thick, yshift=-5] (0,1) -- (0,0.5);
\draw[->, thick, xshift=7.0710] (0.5,0) -- (0.5+0.3535,0.3535);
\draw[->, thick, xshift=5] (2,0) -- (2,0.5);
\end{scope}
\end{tikzpicture}
\caption{Tropical curves contributing to $Z_{Y\times\mathbb{CP}^1,h_{-v_1}h_{-v_2}}$. The wavy line along the singular line indicates some unspecified combination of edges travelling along the singular line.}
\label{zcalc}
\end{figure}
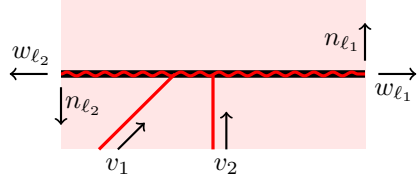

Using such a representative for $h_{-v_i}$, the tropical curves contributing to our calculation must have vertices only\footnote{Actually, there are some more vertices, which we are ignoring because they are irrelevant: The tropical part of $\expl(Y\times\mathbb{CP}^1)$ is a subdivision of the space pictured in \cref{zcalc}. Tropical curves are forced to have vertices where edges intersect this suppressed subdivison.} where these rays intersect the singular locus, and edges only along these rays or along the singular locus. This is because other tropical curves satisfying the required balancing condition are either not rigid after being constrained, and hence do not contribute to the count, or contain a vertex connected by two different paths of edges to the singular locus, or contain a vertex attached to the singular locus, and also attached to an edge constrained to $f^{-1}(0)$. Such curves also do not contribute, as can be seen from the gluing formula combined with the observation that the self intersection of $f^{-1}(0)$ vanishes. See \cref{fig:noncontributors} for examples of tropical curves that do not contribute to our calculation.

\begin{figure}[ht!]
\centering
\begin{tikzpicture}
\begin{scope}
\fill [box style] (0,0) rectangle (4,2);
\draw[wall style] (0,1) -- (4,1);
\draw[curve style] (2,0) -- (2,0.5);
\draw[curve style] (2,0.5) -- (1.5,1) -- (0.5,0);
\draw[curve style] (2,0.5) -- (2.5,1);
\draw[wallcurve style] (0,1) -- (4,1);
\end{scope}
\begin{scope}[xshift=5cm]
\fill [box style] (0,0) rectangle (4,2);
\draw[wall style] (0,1) -- (4,1);
\draw[curve style] (0.25,0) -- (1,0.75) -- (0,0.75);
\draw[curve style] (1,0.75) -- (1.5,1);
\draw[curve style] (2,0) -- (2,1);
\draw[wallcurve style] (0,1) -- (4,1);
\end{scope}
\end{tikzpicture}
\caption{Some tropical curves that are the tropical part of holomorphic curves, but do not contribute to $Z_{Y\times\mathbb{CP}^1,h_{-v_1}h_{-v_2}}$.}
\label{fig:noncontributors}
\end{figure}
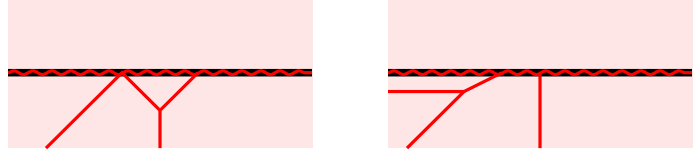

Once we have that all contributing tropical curves are in the form of \cref{zcalc}, the tropical gluing formula of \cref{eq:tgf} simplifies to a formula gluing together two relative invariants: one for each of the vertices in \cref{zcalc}. Let $p_1$ and $p_2$ be the corresponding points in \cref{zcalc} and choose our constraints such that $p_1$ and $p_2$ are both contained in the ray corresponding to the leg $\ell_1$.

To simplify the discussion, we can refine\footnote{See~\cite[Section 9]{iec}. Such refinements do not affect Gromov--Witten invariants, simply resulting in a refinement of the moduli space of holomorphic curves. } $\expl (Y\times\mathbb{CP}^1)$ as pictured in \cref{f21} to an exploded manifold $\ex X$ so that contributing tropical curves are forced to have vertices at these points $p_i$; otherwise, the discussion is complicated by contributing tropical curves with edges passing through these points. After this refinement our contributing tropical curves are forced to have vertices along the singular line at the points $0$, $p_1$, and $p_2$, and any point where one of our rays in the direction $-v_i$ passes through a lower dimensional stratum, like on the left in \cref{f21}. The tropical gluing formula involves Gromov--Witten invariants for the exploded manifolds $\ex X\tc 0:=\ex X_0$, $\ex X\tc{p_1}:=\ex X_1$ and $\ex X\tc {p_2}:=\ex X_2$, each of which is isomorphic to $\expl(Y\times \mathbb{CP}^1)$. The Gromov--Witten invariant from $\ex X_0$ is encapsulated in $\Prop_x$, as it counts possibly disconnected curves with contact data only along the singular locus, whereas the Gromov--Witten invariant from $\ex X_{i}$ is encapsulated in $\W_{v_i,\ell_1}$, (times $t$ to some exponent) as it counts possibly disconnected curves with a single contact point of type $v_i$ and all other contact points along the singular locus.

\begin{figure}[ht!]
\centering
\begin{tikzpicture}
\begin{scope}
\fill [box style] (0,0) rectangle (4,2);
\draw[wall style] (0,1) -- (4,1);
\draw[curve style] (1.5,1) -- (0.5,0);
\draw[curve style] (2,0) -- (2,1);
\draw[wallcurve style] (0,1) -- (4,1);
\draw[lightgray, thick] (1,0) -- (1,2);
\draw[lightgray, thick] (1.5,0) -- (1.5,2);
\draw[lightgray, thick] (2,0) -- (2,2);
\end{scope}
\end{tikzpicture}
\caption{The subdivision of the tropical part of $\expl(Y\times\mathbb{CP}^1)$ used to refine $\expl(Y\times\mathbb{CP}^1)$. Each of the three grey lines should be imagined as a plane extending in the unpictured direction. The line on the left passes through $0\subset\totb{\expl(Y\times\mathbb{CP}^1)}$, and was already present, but not pictured before.}
\label{f21}
\end{figure}
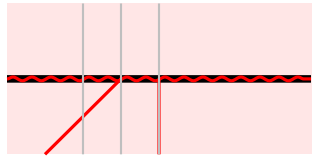

The tropical gluing formula of \cref{eq:tgf} is stated for the contribution of a single, connected tropical curve, however we can apply it for each connected component of a tropical curve $\gamma$ with multiple connected components to derive an analogous formula for disconnected tropical curves. It requires a combinatorial factor $k_\gamma$, the product of the multiplicities of the internal edges of $\gamma$, because it uses evaluation spaces analogous to $\ex X_{v}$, instead of the quotient stack $\ex X_{v}/G_v:=\ex X_{v}/\mathbb Z_{\abs v}$, and it requires dividing by $\abs{\Aut \gamma}$, because the relative invariants use labelled contact data so that we can specify which edges get attached, and the evaluation spaces for multiple edges are the products of the evaluation spaces for individual edges, instead of the quotient of this by symmetries. Summing over all contributing tropical curves, and allowing disconnected curves means that we glue edges in all possible ways, so allows us to use evaluation spaces analogous to our $\ex X^{\mathbf p}/G_{\mathbf p}$, without these extra combinatorial factors.

Let us apply the tropical gluing formula of \cref{eq:tgf} to calculate
\begin{equation} \label{cal1}
\ip{ \beta} {\op{Z_{Y\times\mathbb{CP}^1,h_{-v_1}h_{-v_2}}}{\ell_2}\alpha} =\lrb{\an_{\beta\alpha h_{-v_1}h_{-v_2}}\exp \eta}_{0}
\end{equation}
in the notationally simpler case that the only vertices of our contributing tropical curves $\gamma$ are at the points $p_0$, $p_1$ and $p_2$. In this calculation $\eta$ is a form on $\coprod_{\mathbf p}\ex X^{\mathbf p}/G_{\mathbf p}$, whereas in~\cite{gfgw}, the notation $\eta$ indicates a form on $\coprod_{\mathbf p}\ex X^{\mathbf p}$, which on each component is the pullback $\eta_{\mathbf p}$ of our $\eta$ divided by $\prod_{v}\abs v^{\mathbf p(v)}=\abs{G_{\mathbf p}}/\abs{\Aut \mathbf p}$. It follows that, using our present notation, the combinatorial factor in~\cite[Equation~(1)]{gfgw} becomes $\frac 1{k_\gamma\abs{\Aut \gamma}}$ instead of $\frac {k_\gamma}{\abs{\Aut \gamma}}$ where now $k_\gamma$ means the product of multiplicities of all edges of $\gamma$ instead of just the internal edges, and $\Aut\gamma$ means the full automorphism group of the tropical curve $\gamma$ instead of the group of automorphisms fixing the ends of $\gamma$. Our calculation of \cref{cal1} reduces to
\[
\sum_{\gamma}\frac{1}{k_{\gamma}\abs{\Aut \gamma}}\int{\beta h_{-v_1} h_{-v_2}}\iota^{[\gamma]}_!\Delta^*(\exp \eta)^{[\gamma_{p_0}]}(\exp \eta)^{[\gamma_{p_1}]}(\exp \eta)^{[\gamma_{p_2}]} \ ,
\]
where the sum is over contributing, possibly disconnected tropical curves $\gamma$. In the above, $\gamma_{p_i}$ indicates the tropical curve in $\totb{\ex X_{i}}$ with one connected component for each vertex of $\gamma$ at $p_i$, such that each component has a single vertex at $p_i$ and edges corresponding to the edges of $\gamma$ leaving this vertex. The term $(\exp \eta)^{[\gamma_{p_i}]}$ indicates the Gromov--Witten invariant counting curves in $\ex X_{i}$ with connected components labelled by the components of $\gamma_{p_i}$, and contact data determined by the edges of $\gamma_{p_i}$. Each such tropical curve $\gamma$ determines contact data $\mathbf p_i$, for curves in $\ex X_{i}$ but different tropical curves can determine the same contact data. When we sum over all tropical curves with the same contact data, we obtain the following expression for \cref{cal1}.
\begin{equation} \label{cal2}
\sum_{\mathbf p_0,\mathbf p_1,\mathbf p_2}\frac{1}{\abs {G_{\mathbf p_0}}\abs{G_{\mathbf p_1}}\abs{G_{\mathbf p_2}}}\int{\beta h_{-v_1} h_{-v_2}}\iota_!\Delta^*(\exp \eta_0)_{\mathbf p_0}(\exp \eta_1)_{\mathbf p_1}(\exp \eta_2)_{\mathbf p_2} \ .
\end{equation}
Here, $\eta_i$ now indicates the Gromov--Witten invariant from $\ex X_{i}$ and $(\exp \eta_i)_{\mathbf p_i}$ indicates the pullback of $\exp \eta_i$ to the evaluation space $(\ex X_{i})^{\mathbf p_i}$. The contact data in $\mathbf p_0$ and $\mathbf p_1$ corresponding to edges between $p_0$ and $p_1$ is matched. Denote this contact data by $\mathbf p^{1^-}$ and $\mathbf p^{1^+}$, respectively; so as lists of numbers, $\mathbf p^{1^-}=\mathbf p^{1^+}$. There is a canonical identification of $(\ex X_{0})^{\mathbf p^{1^-}}=(\ex X_{1})^{\mathbf p^{1^+}}$. Similarly, denoting the matched contact data corresponding to edges between $p_1$ and $p_2$ by $\mathbf p^{2^\pm}$ there is a canonical identification of $(\ex X_{1})^{\mathbf p^{2^-}}=(\ex X_{2})^{\mathbf p^{2^+}}$. In \cref{cal2}, the map $\Delta$ is the corresponding diagonal inclusion 
\[
\Delta: (\ex X_{1})^{\mathbf p^{1^+}}(\ex X_{2})^{\mathbf p^{2^+}}\longrightarrow (\ex X_{1})^{\mathbf p_1}(\ex X_{2})^{\mathbf p_2}(\ex X_{2})^{\mathbf p_2}
\]
and $\iota$ indicates the projection forgetting all factors corresponding to internal edges
\[
\iota: (\ex X_{1})^{\mathbf p_1}(\ex X_{2})^{\mathbf p_2}(\ex X_{2})^{\mathbf p_2}\longrightarrow (\ex X_{1})^{\mathbf p_1-\mathbf p^{1^-}}(\ex X_{2})^{\mathbf p_2-\mathbf p^{1^+}-\mathbf p^{2^-}}(\ex X_{2})^{\mathbf p_2-\mathbf p^{2^+}} \ .
\]
With this understood, we can rewrite \cref{cal2} as 
\begin{equation} \label{cal3}
\sum_{\mathbf p^{1^+},\mathbf p^{2^+}}\frac{1}{\abs {G_{\mathbf p^{1^+}}}\abs{ G_{\mathbf p^{2^+}}}}\int_{\ex X_1^{\mathbf p^{1^+}}\ex X_2^{\mathbf p^{2^+}}}\lrb{\an_{\beta}(\exp \eta_0)}_{\mathbf p^{1^-}}\wedge \lrb{\an_{h_{-v_1}}(\exp \eta_1)}_{\mathbf p^{2^-}+\mathbf p^{1^+}} \wedge\lrb{\an_{h_{-v_2}\alpha}(\exp \eta_2)}_{\mathbf p^{2^+}} .
\end{equation}
Note that $\lrb{\an_{\beta}(\exp \eta_0)}_{\mathbf p^{1^-}}$ is contained within $\mathcal H^+_{1^-}\otimes\mathcal H^{-}_{1^-}$, because it counts holomorphic curves which, once constrained by $\beta$, are contained in $f^{-1}(0)$. Similarly, $\lrb{\an_{h_{-v_2}\alpha}(\exp \eta_2)}_{\mathbf p^{2^+}}\in \mathcal H^+_{2^+}\otimes\mathcal H^{-}_{2^+}$. Moreover, given any $\theta\in \mathcal H^+_{2^-}\otimes\mathcal H^{-}_{2^-}$, we have $\lrb{\an_{\theta} \an_{h_{-v_1}}(\exp \eta_1)}_{\mathbf p^{1^+}}\in \mathcal H^+_{1^+}\otimes\mathcal H^{-}_{1^+}$, and the projection of $\an_{\theta} \an_{h_{-v_1}}(\exp \eta_1)$ to $\mathcal H^+_{1^+}$ depends only on the projection of $\theta$ to $\mathcal H^+_{2^-}$. Accordingly, we can rewrite \cref{cal3} as
\[
\int \lrb{\an_{\beta}(\exp \eta_0)}_{1^+}\wedge \lrb{\an_{h_{-v_1}}(\exp \eta_1)}_{1^+,2^-}\wedge \lrb{\an_{h_{-v_2}\alpha}(\exp \eta_2)}_{2^+}
\]
and then rewrite this as a composition of operators
\[
\ip{\beta}{ \Prop_{x}\circ t^{c_1}\W_{v_1,\ell_1}\circ t^{c_2}\W_{v_2,\ell_1}(\alpha)} \ ,
\]
where the constant $c_i$ is the $\omega$-energy of the corresponding curve at the vertex $p_i$, which is $\lrb{\int_{E^-_{\ell_1}}\omega} {w_{\ell_1}\wedge v_i}$.

This above suffices to prove \cref{zc} in the case that our tropical curves do not have vertices away from $p_0$, $p_1$, and $p_2$. In the case of these extra vertices, using the tropical gluing formula in conjunction with \cref{toric2leg}, we obtain the same result
\[
\op{Z_{Y\times\mathbb{CP}^1,h_{-v_1}h_{-v_2}}}{\ell_2}\rvert_{\mathcal H^{+}_{\ell_1}}=\Prop_x\circ t^{c_1}\W_{v_1,\ell_1}\circ t^{c_2}\W_{v_2,\ell_1} \ ,
\]
except now $c_i$ is $\lrb{\int_{E^-_{\ell_1}}\omega} {w_{\ell_1}\wedge v_i}$ plus the $\omega$-energy of the curve at the extra vertices on the $i$th ray, which is concentrated where this ray crosses the codimension 1 stratum passing through $0$; this only happens when $v_i\wedge n_{\ell_1}>0$, and the $\omega$-energy here is $(v_i\wedge n_{\ell_1})x$. So 
\[
c_i= \lrb{\int_{E^-_{\ell_1}}\omega} {w_{\ell_1}\wedge v_i}+ \lrb{\int_{\mathbb{CP}^1}\omega}\max\lrb{0,{v_i\wedge n_{\ell_1}}} \ . \qedhere
\]
\end{proof}

A remarkable fact about these operators $\W_{v,\ell}$ is that they obey the following commutation relations.

\begin{thm} \label{Wcommutation}
We have
\[
\W_{v,\ell} \W_{w,\ell}-\W_{w,\ell}\W_{v,\ell} = [{v\wedge w}]_q \, \W_{v+w,\ell}+ ( n_\ell\wedge v) \delta_{v+w} \ ,
\]
or equivalently,
\[
\left[\W_{aw_\ell+bn_\ell,\ell},\W_{cw_\ell+dn_\ell,\ell}\right]=[ad-bc]_q\W_{(a+c)w_\ell+(b+d)n_\ell,\ell}-a\delta_{a+c}\delta_{b+d} \ .
\]
\end{thm}

\begin{proof}
This commutation relation follows from computing $Z_{Y\times\mathbb{CP}^1,h_{-v}h_{-w}}$ using two different forms representing $h_{-v}$ and $h_{-w}$.

Consider the case that $v+w\neq 0$, in which the second term on the right side of the commutation relation does not contribute. This assumption ensures that all holomorphic curves contributing to the calculation of $Z_{Y\times\mathbb{CP}^1,h_{-v}h_{-w}}$ are contained in $f^{-1}(0)$. As in the proof of \cref{zc}, we are free to choose forms representing $h_{-v}$ and $h_{-w}$ so that the tropical curves contributing to the calculation of $Z_{Y\times \mathbb{CP}^1,h_{-v}h_{-w}}$ consist of tropical curves with an end in the directions $-v$ and $-w$ constrained to a chosen ray, and all other ends travelling out in the direction of the singular line. Moreover, each contributing tropical curve must be rigid once these ends are constrained, and each component of these tropical curves minus the singular line must consist of a tree with at most one edge attached to a vertex on the singular line.

First, consider the case that $v\wedge w$, $w_\ell \wedge v$ and $w_\ell \wedge w$ are positive. In this case, \cref{zc} computes $Z_{Y\times \mathbb{CP}^1,h_{-v}h_{-w}}$, and if we choose constraints as in the proof of \cref{zc}, contributing tropical curves are as pictured on the left in \cref{fig:comproof1}. However, if we choose our constraints so that the corresponding rays from the singular line in the direction $-v$ and $-w$ cross, we get contributing tropical curves of the two types depicted on the right. The first of these corresponds to applying $\W_{w,\ell}\W_{v,\ell}$, whereas \cref{3dvertex} and the tropical gluing formula imply that the second corresponds to applying $[v\wedge w]_q \, \W_{v+w,\ell}$.

\begin{figure}[ht!]
\centering
\begin{tikzpicture}
\begin{scope}
\fill [box style] (0,0) rectangle (4,2);
\draw[wall style] (0,1) -- (4,1);
\draw[curve style] (1.5,1) -- (0.5,0);
\draw[curve style] (2,0) -- (2,1);
\draw[wallcurve style] (0,1) -- (4,1);
\end{scope}
\node at (4.5,1) {$=$};
\begin{scope}[xshift=5cm]
\fill [box style] (0,0) rectangle (4,2);
\draw[wall style] (0,1) -- (4,1);
\draw[curve style] (2.5,1) -- (1.5,0);
\draw[curve style] (2,0) -- (2,1);
\draw[wallcurve style] (0,1) -- (4,1);
\end{scope}
\node at (9.5,1) {$+$};
\begin{scope}[xshift=10cm]
\fill [box style] (0,0) rectangle (4,2);
\draw[wall style] (0,1) -- (4,1);
\draw[curve style] (2,0.5) -- (1.5,0);
\draw[curve style] (2,0) -- (2,0.5);
\draw[curve style] (2.25,1) -- (2,0.5);
\draw[wallcurve style] (0,1) -- (4,1);
\end{scope}
\end{tikzpicture}
\caption{In the case $v\wedge w$, $w_\ell \wedge v$ and $w_\ell \wedge w$ are positive, this diagram represents two ways of calculating $Z_{Y\times\mathbb{CP}^1,h_{-v}h_{-w}}$, thus showing that $\W_{v,\ell} \W_{w,\ell}=\W_{w,\ell} \W_{v,\ell} + \left[{v\wedge w}\right]_q \, \W_{v+w,\ell}$.}
\label{fig:comproof1}
\end{figure}
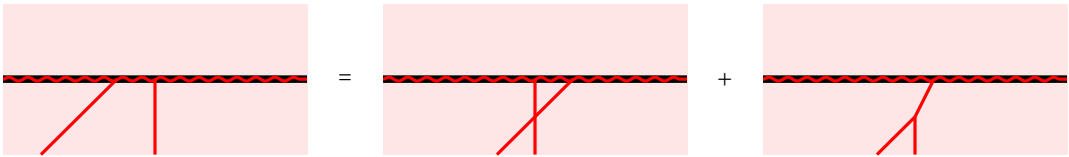

Similarly, calculating $Z_{Y\times \mathbb{CP}^1, h_{-v}h_{-w}}$ in two different ways gives our commutation relation in all other cases when $v+w\neq 0$, as illustrated in \cref{fig:comproof2,fig:comproof3,fig:comproof4,fig:comproof5,fig:comproof6}.

\begin{figure}[ht!]
\centering
\begin{tikzpicture}
\begin{scope}
\fill [box style] (0,0) rectangle (4,2);
\draw[wall style] (0,1) -- (4,1);
\draw[curve style] (1.5,1) -- (0.5,2);
\draw[curve style] (3.5,2) -- (2.5,1);
\draw[wallcurve style] (0,1) -- (4,1);
\end{scope}
\node at (4.5,1) {$=$};
\begin{scope}[xshift=5cm]
\fill [box style] (0,0) rectangle (4,2);
\draw[wall style] (0,1) -- (4,1);
\draw[curve style] (1.5,1) -- (2.5,2);
\draw[curve style] (1.5,2) -- (2.5,1);
\draw[wallcurve style] (0,1) -- (4,1);
\end{scope}
\node at (9.5,1) {$+$};
\begin{scope}[xshift=10cm]
\fill [box style] (0,0) rectangle (4,2);
\draw[wall style] (0,1) -- (4,1);
\draw[curve style] (2,1) -- (2,1.5);
\draw[curve style] (1.5,2) -- (2,1.5);
\draw[curve style] (2.5,2) -- (2,1.5);
\draw[wallcurve style] (0,1) -- (4,1);
\end{scope}
\end{tikzpicture}
\caption{In the case $v\wedge w$, $w_\ell \wedge v$ and $w_\ell \wedge w$ are negative, we obtain $(-\W_{v,\ell})(-\W_{w,\ell})=(-\W_{w,\ell})(-\W_{v,\ell})+\left[-{v\wedge w}\right]_q\, (-\W_{v+w,\ell})$.}
\label{fig:comproof2}
\end{figure}

\begin{figure}[ht!]
\centering
\begin{tikzpicture}
\begin{scope}
\fill [box style] (0,0) rectangle (4,2);
\draw[wall style] (0,1) -- (4,1);
\draw[curve style] (1.5,1) -- (1,2);
\draw[curve style] (1.5,0) -- (2.5,1);
\draw[wallcurve style] (0,1) -- (4,1);
\end{scope}
\node at (4.5,1) {$=$};
\begin{scope}[xshift=5cm]
\fill [box style] (0,0) rectangle (4,2);
\draw[wall style] (0,1) -- (4,1);
\draw[curve style] (1.5,1) -- (0.5,0);
\draw[curve style] (1.5,2) -- (2,1);
\draw[wallcurve style] (0,1) -- (4,1);
\end{scope}
\node at (9.5,1) {$+$};
\begin{scope}[xshift=10cm]
\fill [box style] (0,0) rectangle (4,2);
\draw[wall style] (0,1) -- (4,1);
\draw[curve style] (1.8333,1.3333) -- (0.5,0);
\draw[curve style] (1.5,2) -- (1.8333,1.3333) -- (2.5,1);
\draw[wallcurve style] (0,1) -- (4,1);
\end{scope}
\end{tikzpicture}
\caption{In the case $w_\ell \wedge v$ and $w_\ell \wedge (v+w)$ are negative, but $v\wedge w$ and $w_\ell \wedge w$ are positive, we obtain $-\W_{v,\ell}\W_{w,\ell}=\W_{w,\ell}(-\W_{v,\ell})+\left[ {v\wedge w}\right]_q \, (-\W_{v+w,\ell})$.}
\label{fig:comproof3}
\end{figure}

\begin{figure}[ht!]
\centering
\begin{tikzpicture}
\begin{scope}
\fill [box style] (0,0) rectangle (4,2);
\draw[wall style] (0,1) -- (4,1);
\draw[curve style] (1.5,1) -- (0.5,2);
\draw[curve style] (1.5,0) -- (2,1);
\draw[wallcurve style] (0,1) -- (4,1);
\end{scope}
\node at (4.5,1) {$+$};
\begin{scope}[xshift=5cm]
\fill [box style] (0,0) rectangle (4,2);
\draw[wall style] (0,1) -- (4,1);
\draw[curve style] (1.8333,0.6666) -- (0.5,2);
\draw[curve style] (1.5,0) -- (1.8333,0.6666) -- (2.5,1);
\draw[wallcurve style] (0,1) -- (4,1);
\end{scope}
\node at (9.5,1) {$=$};
\begin{scope}[xshift=10cm]
\fill [box style] (0,0) rectangle (4,2);
\draw[wall style] (0,1) -- (4,1);
\draw[curve style] (2.5,1) -- (1.5,2);
\draw[curve style] (1.5,0) -- (2,1);
\draw[wallcurve style] (0,1) -- (4,1);
\end{scope}
\end{tikzpicture}
\caption{In the case $w_\ell \wedge v$ is negative, but $v\wedge w$, $w_\ell \wedge (v+w)$ and $w_\ell \wedge w$ are positive, we obtain $-\W_{v,\ell}\W_{w,\ell}+\left[ {v\wedge w}\right]_q \, (\W_{v+w,\ell})=\W_{w,\ell}(-\W_{v,\ell})$.}
\label{fig:comproof4}
\end{figure}

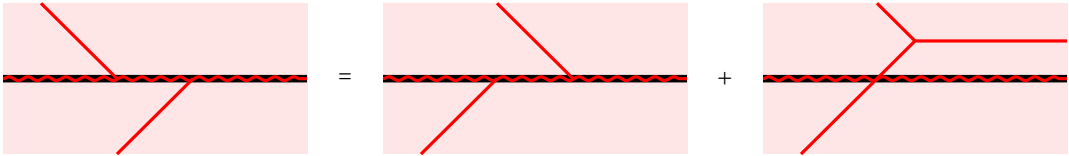
\begin{figure}[ht!]
\centering
\begin{tikzpicture}
\begin{scope}
\fill [box style] (0,0) rectangle (4,2);
\draw[wall style] (0,1) -- (4,1);
\draw[curve style] (1.5,1) -- (0.5,2);
\draw[curve style] (1.5,0) -- (2.5,1);
\draw[wallcurve style] (0,1) -- (4,1);
\end{scope}
\node at (4.5,1) {$=$};
\begin{scope}[xshift=5cm]
\fill [box style] (0,0) rectangle (4,2);
\draw[wall style] (0,1) -- (4,1);
\draw[curve style] (1.5,1) -- (0.5,0);
\draw[curve style] (1.5,2) -- (2.5,1);
\draw[wallcurve style] (0,1) -- (4,1);
\end{scope}
\node at (9.5,1) {$+$};
\begin{scope}[xshift=10cm]
\fill [box style] (0,0) rectangle (4,2);
\draw[wall style] (0,1) -- (4,1);
\draw[curve style] (2,1.5) -- (0.5,0);
\draw[curve style] (1.5,2) -- (2,1.5);
\draw[curve style] (2,1.5) -- (4,1.5);
\draw[wallcurve style] (0,1) -- (4,1);
\end{scope}
\end{tikzpicture}
\caption{In the case $v+w$ is a positive multiple of $w_\ell$, $w_\ell \wedge w$ is positive, but $v \wedge w$ and $w_\ell \wedge v$ are negative, we obtain $-\W_{v,\ell}\W_{w,\ell}=\W_{w,\ell}(-\W_{v,\ell})+\left[ {v\wedge w}\right]_q \, (-\W_{v+w,\ell})$.}
\label{fig:comproof5}
\end{figure}

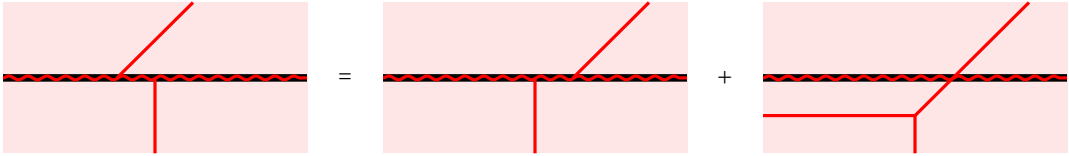
\begin{figure}[ht!]
\centering
\begin{tikzpicture}
\begin{scope}
\fill [box style] (0,0) rectangle (4,2);
\draw[wall style] (0,1) -- (4,1);
\draw[curve style] (1.5,1) -- (2.5,2);
\draw[curve style] (2,0) -- (2,1);
\draw[wallcurve style] (0,1) -- (4,1);
\end{scope}
\node at (4.5,1) {$=$};
\begin{scope}[xshift=5cm]
\fill [box style] (0,0) rectangle (4,2);
\draw[wall style] (0,1) -- (4,1);
\draw[curve style] (2.5,1) -- (3.5,2);
\draw[curve style] (2,0) -- (2,1);
\draw[wallcurve style] (0,1) -- (4,1);
\end{scope}
\node at (9.5,1) {$+$};
\begin{scope}[xshift=10cm]
\fill [box style] (0,0) rectangle (4,2);
\draw[wall style] (0,1) -- (4,1);
\draw[curve style] (2,0.5) -- (3.5,2);
\draw[curve style] (2,0) -- (2,0.5);
\draw[curve style] (0,0.5) -- (2,0.5);
\draw[wallcurve style] (0,1) -- (4,1);
\end{scope}
\end{tikzpicture}
\caption{In the case $v+w$ is a negative multiple of $w_\ell$, $w_\ell \wedge v$ is negative, but $v\wedge w$ and $w_\ell \wedge w$ positive, we obtain $-\W_{v,\ell}\W_{w,\ell}=\W_{w,\ell}(-\W_{v,\ell})+\left[- {v\wedge w}\right]_q \, \W_{v+w,\ell}$.}
\label{fig:comproof6}
\end{figure}

\newpage

The only important cases not drawn are when $v$ is proportional to $w$. When $w=kv$ with $k\neq -1$, and $v$ not proportional to $w_\ell$, all relevant holomorphic curves for calculating $Z_{Y\times\mathbb{CP}^1,h_{-v}h_{-w}}$ are contained in $f^{-1}(0)$, and $\W_{v,\ell}$ and $\W_{w,\ell}$ commute, as in \cref{fig:comproof7}. When $v=-w$, there are extra holomorphic curves which contribute, outside of $f^{-1}(0)$. These contributions can also be calculated tropically, but we instead calculate them using the Jacobi identity and the case when $v$ is a multiple of $w_\ell$.

\begin{figure}[ht!]
\centering
\begin{tikzpicture}
\begin{scope}
\fill [box style] (0,0) rectangle (4,2);
\draw[wall style] (0,1) -- (4,1);
\draw[curve style] (1,0) -- (1,1);
\draw[curve style] (3,1) -- (3,2);
\draw[wallcurve style] (0,1) -- (4,1);
\end{scope}
\node at (4.5,1) {$=$};
\begin{scope}[xshift=5cm]
\fill [box style] (0,0) rectangle (4,2);
\draw[wall style] (0,1) -- (4,1);
\draw[curve style] (3,0) -- (3,1);
\draw[curve style] (1,1) -- (1,2);
\draw[wallcurve style] (0,1) -- (4,1);
\end{scope}
\end{tikzpicture}
\caption{$\W_{n_\ell,\ell}\W_{-n_\ell,\ell}=\W_{-n_\ell,\ell}\W_{n_\ell,\ell}$.}
\label{fig:comproof7}
\end{figure}

When $v$ is a multiple of $w_\ell$, $\W_{v,\ell}$ is straightforward to calculate, because all holomorphic curves involved are $k$-fold covers of the fibers of $Y\times\mathbb CP^1$. For $v=kw_\ell$ with $k$ positive, we get
\[W_v(\alpha^{+}_{k,\ell})=\ip {h_{-kw_\ell}}{\alpha^+_{k,\ell}} =k \ ,\]
and as $\W_{kw_\ell,\ell}=-W_{kw_\ell}$ (see \cref{wdef}) we get that $\W_{kw_\ell,\ell}\alpha^+_{k,\ell}=-k$.  More generally, for $k>0$, 
\begin{equation} \label{W+}
\W_{kw_\ell,\ell}=\an_{\alpha^+_{k,\ell}}:\mathcal H^+_\ell \longrightarrow \mathcal H^+_\ell \ .
\end{equation}
Similarly, as $\ip {h_{kw_{\ell_1}}}{\alpha^+_{k,\ell_2}} =k$ we get that $\widetilde W_{-kw_{\ell_1}} \alpha^{+\bf p}_{\ell_1}=\alpha^-_{k,\ell_2}\alpha^{-\bf p}_{\ell_2}$, so 
\begin{equation} \label{W-}
\W_{-kw_\ell,\ell}(\beta)=\alpha^+_{k,\ell}\beta \ .
\end{equation}

So our commutation relation holds in this case. 
\[
\W_{kw_\ell,\ell}\W_{k'w_\ell,\ell}-\W_{k'w_\ell,\ell}\W_{kw_\ell,\ell} = -k \delta_{k+k'}
\]
We now have that our commutation relation holds whenever $v+w\neq 0$, and whenever $v$ is proportional to $w_\ell$. It is straightforward to verify that our commutation relation satisfies the Jacobi identity, so it follows that all our operators obey this commutation relation. In particular, when $v+w= 0$, and neither is proportional to $w_\ell$, write $\W_{w,\ell}$ in terms of the commutator of $\W_{w_\ell,\ell}$ and $\W_{w-w_\ell,\ell}$, then apply the Jacobi identity to verify our commutation relation for $\W_{v,\ell}$ and $\W_{w,\ell}$.
\end{proof}

\Cref{W+,W-} give that $\W_{kw_\ell,\ell}$ is adjoint to $\W_{-kw_\ell,\ell}$, using the integration pairing. Equivalently, $\W_{kw_\ell,\ell}$ is adjoint to $-\W_{-kw_\ell,\ell}$ when using our positive definite inner product from \cref{ppdef}. 
\[
\pp{\W_{kw_\ell,\ell}\alpha}{\beta}=\pp{\alpha}{-\W_{-kw_\ell,\ell}\beta}
\]

More generally, using the positive definite inner product,  the adjoint of $\W_{v,\ell}$ is $-\W_{-v,\ell}$ in the sense that
\begin{equation} \label{Wadjoint}
\pp{\alpha}{\W_{v,\ell}\beta}=\pp{-\W_{-v,\ell} \alpha}{ \beta } \ .
\end{equation}

Conceptually, this is because the integration-pairing adjoint of $\widetilde W_v:\mathcal H_{\ell_1}^+\longrightarrow\mathcal H_{\ell_2}^-$ can be constructed in the same way as $\widetilde W_v$, but reversing the roles of the legs $\ell_1$ and $\ell_2$, which are also reversed by the symmetry sending $v$ to $-v$.
\[
\begin{split} \ip{\alpha_{\ell_2}^{+\bf q}}{\widetilde W_{v}\alpha_{\ell_1}^{+\bf p}}&=\ip{\alpha_{\ell_2}^{+\bf q} \alpha_{\ell_1}^{+\bf p} h_{-v} }{\exp \eta}
\\ &=\ip{\alpha_{\ell_1}^{+\bf q} \alpha_{\ell_2}^{+\bf p} h_{v} }{\exp \eta}
\\ &= \ip{\alpha_{\ell_2}^{+\bf p}}{\widetilde W_{-v}\alpha_{\ell_1}^{+\bf q}}
\end{split}
\]
Identifying $\mathcal H_{\ell_2}^\mp$ with $\mathcal H_{\ell_1}^\pm$, to define $W_v$ from $\widetilde W_v$, we then get
\[\pp{\alpha_{\ell_1}^{+\bf q}}{W_v\alpha_{\ell_1}^{+\bf p}}=\ip{\alpha_{\ell_1}^{-\bf q}}{W_v\alpha_{\ell_1}^{+\bf p}}=\ip{\alpha_{\ell_2}^{+\bf q}}{\widetilde W_{v}\alpha_{\ell_1}^{+\bf p}}= \ip{\alpha_{\ell_2}^{+\bf p}}{\widetilde W_{-v}\alpha_{\ell_1}^{+\bf q}}=\pp{W_{-v}\alpha_{\ell_1}^{+\bf q}}{\alpha_{\ell_1}^{+\bf p}}\ .\]
When $\W_{v,\ell_1}=\pm W_{v}$, we have $\W_{-v,\ell_1}=\mp W_{-v}$, so \cref{Wadjoint} holds and the adjoint of $\W_{v,\ell}$ using the positive definite pairing is $-\W_{-v,\ell}$. Note that it is not true in general that $\W_{-v,\ell}$ is the integration-pairing adjoint of $\W_{v,\ell}$, even though this holds in the special case that $v$ is a multiple of $w_\ell$.

There are also the following two vanishing results for $\W_{v,\ell}$ that follow from topological consideration of the possible contact data for holomorphic curves in $Y\times\mathbb{CP}^1$.
\begin{align} 
\W_{v,\ell}\alpha_\ell^{+\mathbf p} &= 0, && \text{ if }\sum_{k=1}^\infty kp_k< {v\wedge n_\ell} \ , \label{Wvanishing} \\
\pp{ \alpha_\ell^{+\mathbf q}}{\W_{v,\ell}\alpha_\ell^{+\mathbf p}} &= 0, && \text{ unless }\sum_{k=1}^\infty kp_k= {v\wedge n_\ell}+\sum_{k=1}^\infty kq_k \ . \label{Wgrading}
\end{align}

So if we assign $\mathcal H^+_\ell$ the $\mathbb Z$-grading in which the degree of $\alpha_\ell^{+\mathbf p}$ is $\sum_k k p_k$, the operator $\W_{v,\ell}$ has degree $-v\wedge n_\ell$. Note that this implies that the propagation operator $\Prop_{x,\ell}$ and $\W_{v,\ell}$ almost commute in the following sense.
\begin{equation} \label{PWc}
\Prop_{x,\ell} \circ \W_{v,\ell}=t^{-(v\wedge n_\ell)x}\W_{v,\ell} \circ \Prop_{x,\ell}
\end{equation}

The degree $0$ operators $\W_{kn_\ell,\ell}$ act as scalars on the degree zero subspace of $\mathcal H^+_\ell$. We compute these weights in the lemmas below.

\begin{lemma} \label{W01}
\[
\W_{n_\ell,\ell}(1)=\frac 1{[1]_q}
\]
\end{lemma}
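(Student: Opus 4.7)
The plan is to reduce the computation to an algebraic one using the commutation relations of \cref{Wcommutation}, combined with a short Gromov--Witten calculation for the crux.

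First, by the grading observation following \cref{Wgrading}, the operator $\W_{n_\ell, \ell}$ has degree $-n_\ell \wedge n_\ell = 0$; hence $\W_{n_\ell, \ell}(1)$ lies in the degree-$0$ subspace of $\mathcal H^+_\ell$ and must equal some scalar $c \in \N$ times the unit. The task is to identify this scalar.

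Next, I would apply the commutation relation of \cref{Wcommutation} with $v = w_\ell$ and $w = n_\ell - w_\ell$, which yields $[\W_{w_\ell, \ell}, \W_{n_\ell - w_\ell, \ell}] = [1]_q \, \W_{n_\ell, \ell}$ since $v \wedge w = w_\ell \wedge n_\ell = 1$ and $v + w = n_\ell \neq 0$. By \cref{W+}, $\W_{w_\ell, \ell} = \an_{\alpha^-_{1,\ell}}$, which annihilates the unit since $\alpha^-_{1,\ell}$ has no degree-$0$ component. Applying the commutator to $1$ and using $\W_{w_\ell, \ell}(1) = 0$ gives
\[
[1]_q \, c = \W_{w_\ell, \ell}\bigl(\W_{n_\ell - w_\ell, \ell}(1)\bigr).
\]
Since $\W_{n_\ell - w_\ell, \ell}$ has degree $w_\ell \wedge n_\ell = 1$, its value on $1$ lies in the degree-$1$ subspace and equals $c' \alpha^+_{1,\ell}$ for some $c' \in \N$. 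Because $\an_{\alpha^-_{1,\ell}}(\alpha^+_{1,\ell}) = \langle \alpha^-_{1,\ell}, \alpha^+_{1,\ell}\rangle = 1$, the right-hand side collapses to $c'$, giving $[1]_q \, c = c'$; it remains only to show $c' = 1$.

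Finally, $c'$ is the Gromov--Witten invariant giving the coefficient of $\alpha^+_{1,\ell}$ in $W_{n_\ell - w_\ell}(1) = (Z_{Y \times \mathbb{CP}^1, h_{-(n_\ell - w_\ell)}})^-_{\ell_2}$. I would compute this invariant by appealing to the tropical gluing formula of \cref{eq:tgf}: after constraining by $h_{w_\ell - n_\ell}$, the contributing tropical curves have image on the wall, with a vertex on the singular locus where the usual balancing condition is modified as described in \cref{cdtc}, so there is essentially one rigid contributing curve carrying trivial weight. This yields $c' = 1$, and combining the steps gives $c = 1/[1]_q$ as claimed.

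The hard part will be this final tropical enumeration: correctly identifying the unique contributing tropical curve near the singular leg, tracking the wall-crossing modifications to the balancing, and verifying that all combinatorial prefactors in the gluing formula (edge multiplicities, automorphism groups, and the quotient-stack corrections flagged after \cref{eq:tgf}) conspire to produce exactly $c' = 1$ without any stray factor of $[n]_q$ or $1/|G_v|$.
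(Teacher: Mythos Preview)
Your algebraic reduction in the first two steps is clean and correct: the commutator identity does reduce the weight $c = \W_{n_\ell,\ell}(1)$ to computing $c' = \langle \alpha^-_{1,\ell}, \W_{n_\ell - w_\ell,\ell}(1)\rangle$, and indeed $[1]_q\,c = c'$. This is a different route from the paper's, which never invokes \cref{Wcommutation} here.

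The gap is in your final step. You assert that the tropical enumeration yields a single rigid curve ``carrying trivial weight'', but the weight at that vertex on the singular locus is itself a relative Gromov--Witten invariant of (a piece of) $\expl(Y\times\mathbb{CP}^1)$ --- precisely the kind of invariant you are trying to compute. The tropical gluing formula does not reduce this vertex contribution to something combinatorial; it packages it as another curve count near the singular line, and nothing you have written rules out higher-genus contributions or stray $[k]_q$ factors there. In effect you have traded the unknown $c$ for the unknown $c'$, and the claimed value $c'=1$ still requires an independent Gromov--Witten input of the same nature as the original problem.

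The paper sidesteps this circularity by anchoring to a Gromov--Witten invariant that can be computed \emph{directly}, without any tropical machinery: the count of embedded spheres $\{z_2=\text{const},\, z_3=\text{const}\}$ in $Y\times\mathbb{CP}^1$, which is visibly $t^x$ when the constraint point lies in the interior. Moving the constraint point into the boundary divisor where $z_2=0$ forces the contributing tropical curve to acquire a monovalent vertex on the singular line (contributing $\W_{n_\ell,\ell}(1)$) joined to a trivalent vertex (contributing $[1]_q$ via \cref{3dvertex}). Equating the two evaluations gives $\W_{n_\ell,\ell}(1)\,[1]_q = 1$. The point is that the $[1]_q$ arises from the \emph{toric} trivalent vertex, where \cite{3d} applies, not from the vertex on the singular locus; this is what makes the computation close. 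Your approach could be salvaged by finding an analogous explicit anchor for $c'$, but as written the ``trivial weight'' claim is exactly the missing Gromov--Witten calculation.
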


\begin{proof}
Take $Y$ to be the toric blowup of $(\mathbb{CP}^1)^2$ at $(0,\infty)$ and $(\infty,0)$, so we have coordinates $z_1,z_2$ on $Y$ and $z_3$ on $\mathbb{CP}^1$. Consider the family of embedded holomorphic spheres defined by setting $z_2$ and $z_3$ to be constant and nonzero. The evaluation space for the corresponding moduli space of curves is $\ex X_{v_f}\times \ex X_{(1,0,0)}$. Let $\beta\in\mathcal H$ represent the Poincar\'{e} dual to $\ex X_{v_f}\times \{p\}$ where $p$ is a chosen point in $\ex X_{(1,0,0)}$. Note that $\ex X_{(1,0,0)}$ is the explosion of a toric manifold with toric coordinates $(z_2,z_3)$, and the evaluation map from the interior of our moduli space simply reads off these coordinates. Choosing $p$ so that $(z_2(p),z_3(p))\in (\mathbb C^*)^2$, we get
\[
\ip{ \eta}{\beta} = t^x
\]
where $x$ is the symplectic area of these spheres. The calculation of this Gromov--Witten invariant is analogous when we choose $p\in\ex X_{(1,0,0)}$ in the divisor where $z_2$ is infinite. There, the description of the moduli space is as above, with the coordinates $z_2$ and $z_3$ extended to exploded coordinate functions. The corresponding holomorphic curves have tropical part a straight line in the $v_f$ and $(1,0,0)$ directions, translated in the $(0,1,0)$ direction from the singular line some distance, depending on the image of $p$ in the tropical part of $\ex X_{(1,0,0)}$.

If instead, we choose $p$ in the divisor where $z_2=0$, the calculation becomes interesting. In this case, the tropical part of the relevant curves have a ray in the $(1,0,0)$ direction, translated in the $(0,-1,0)$ direction from the singular line. Over this side of the singular line, the parallel transport of $v_f$ is (0,-1,0) instead of $(-1,0,0)$ so in order for this ray to continue down in the $v_f$ direction and satisfy the balancing condition, this tropical curve must also have an edge in the $(1,-1,0)$ direction, travelling from the singular line to our rays in the $(1,0,0)$ and $v_f$ directions. So this tropical curve has a monovalent vertex on the singular line, joined to a trivalent vertex by an edge in the $(1,-1,0)$ direction, with rays leaving this vertex in the directions $(1,0,0)$ and $v_f$. The only other tropical curves satisfying the balancing condition and our constraints have extra vertices in the interior of the edges of this curve, and such tropical curves never contribute to our Gromov--Witten invariants. See \cref{fig:YxCP1} for a diagram of the tropical curves in the tropical part of $\expl(Y\times \mathbb{CP}^1)$.

\begin{figure}[ht!]
\centering
\begin{tikzpicture}
\begin{scope} [scale = 0.5, yshift = -0.5cm]
\fill[toric style] (0,0) -- (0,1) -- (2,3) -- (4,1) -- (4,0) -- (2,-2) -- cycle;
\draw[yellow, line width = 0.5mm] (0,0) -- (0,1) -- (2,3) -- (4,1) -- (4,0);
\draw[cyan, line width = 0.5mm] (0,0) -- (2,-2) -- (4,0);
\draw[red, line width = 0.5mm] plot [smooth] coordinates{(0.5,-0.5) (1.25,1) (2.5,2.5)};
\draw[yellow, line width = 0.5mm] plot [smooth] coordinates{(0,0.25) (2,0) (4,0.25)};
\end{scope}
\begin{scope} [xshift = 3.5cm, scale = 0.5]
\draw[box style, line width = 1mm] (0,0) -- (6,0);
\draw[curve style] (1.5,-2) -- (1.5,2);
\draw[curve style] (4.5,-2) -- (4.5,2); 
\draw[curve style] (3,0) -- (4.5,0);
\node at (3,0) [circle,fill,inner sep=0.5mm]{};
\end{scope}
\begin{scope} [xshift = 8cm, scale = 0.5]
\fill[box style] (-1,1) -- (1,-1) -- (7,-1) -- (5,1) -- cycle;
\draw[curve style] (2.5,-1.5) -- (2.5,2.5);
\draw[curve style] (3,0) -- (2.5,0.5);
\draw[wall style] (0,0) -- (6,0);
\draw[curve style] (3.5,-2.5) -- (3.5,1.5);
\end{scope}
\end{tikzpicture}
\caption{On the left, a toric picture of $Y$, showing the divisor in yellow and a holomorphic curve in red. On the right, a picture of two tropical curves in the tropical part of $\expl(Y\times \mathbb{CP}^1)$, each the tropical part of a holomorphic curve in the same homology class. In the middle, a side-on view of these two curves, showing the projection to the tropical part of $\expl Y$.}
\label{fig:YxCP1}
\end{figure}
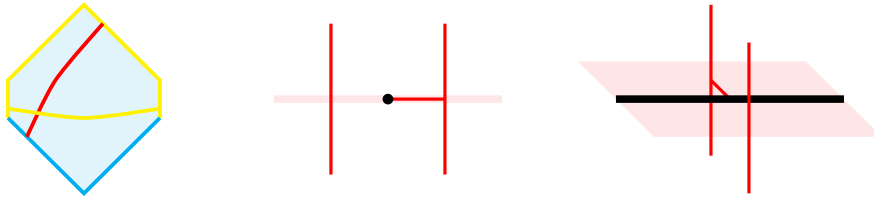

Choose our leg $\ell$ such that $n_\ell = -(1,-1,0)$. The balancing condition for curves along the singular line implies that $\W_{n_\ell,\ell}(1) \in \N$. Moreover, we can identify $W_{n_\ell,\ell}(1)$ as the Gromov--Witten invariant
\[
\W_{n_\ell,\ell}(1) = \ip {\eta}{h_{-n_\ell}} \ ,
\]
because it only counts connected curves --- the virtual moduli space of stable holomorphic curves with zero contact data is empty in this manifold because of symmetry considerations.

The tropical gluing formula of \cref{eq:tgf} and \cref{3dvertex} together give
\[
\ip {\eta}{ \beta } = \W_{n_\ell,\ell}(1)[1]_q \, t^x \ ,
\]
which implies our desired result when compared to $\ip{ \eta}{\beta} = t^x$.
\end{proof}

\cref{W01} together with \cref{Wadjoint}, the commutation relations from \cref{Wcommutation}, and the vanishing relations of \cref{Wgrading}, suffice to determine the operators $\W_{v,\ell}$ completely. In the following lemma, we instead use the tropical gluing formula to calculate the weights $\W_{kn_\ell,\ell}(1)$.

\begin{lemma} \label{W0}
\[
\W_{kn_\ell,\ell}(1)=\frac {(-1)^{k+1}}{[k]_q} \text{ for } k>1
\]
\end{lemma}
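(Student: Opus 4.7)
The plan is to follow the strategy of the proof of \cref{W01}, adapting it to the case $k > 1$. Take $Y$ as the toric blowup of $\mathbb{CP}^2$ at two points and $\bar X = Y \times \mathbb{CP}^1$ as in \cref{W01}, with $n_\ell = -(1,-1,0)$. I would construct a Gromov--Witten invariant $\langle \eta, \beta_k \rangle$ analogous to $\langle \eta, \beta \rangle$ in the proof of \cref{W01}, but with $\beta_k$ a constraint adapted to order-$k$ contact data (for instance, a Poincar\'e dual of $\ex X_{kv_f} \times \{p\}$ for a suitable point $p \in \ex X_{k(1,0,0)}$), and compute it in two ways. The symmetry argument from \cref{W01} (under which the virtual moduli space of curves with empty contact data vanishes) again ensures that only connected curves contribute.

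First, I would evaluate $\langle \eta, \beta_k \rangle$ directly by placing the point constraint at a generic $p$ with $(z_2(p), z_3(p)) \in (\mathbb C^*)^2$. This localises the moduli space on $k$-fold covers of the family of embedded spheres considered in the proof of \cref{W01}. Applying the three-dimensional tropical correspondence formula of \cite[Theorem~1.1]{3d} to the resulting $k$-multiply covered configurations yields $(-1)^{k+1} t^{kx}$, where the alternating sign is characteristic of multicover contributions for $(-1,-1)$-spheres in a Calabi--Yau threefold.

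Second, I would evaluate the same invariant by instead placing the point constraint on the divisor $z_2 = 0$. As in the proof of \cref{W01}, this forces every contributing tropical curve to take a specific form: a monovalent vertex on the singular line joined by an edge in the direction $(1,-1,0)$ to a trivalent vertex, with rays leaving the trivalent vertex in the directions $(1,0,0)$ and $v_f$. Applying the tropical gluing formula \cref{eq:tgf}, the monovalent vertex contributes $\W_{kn_\ell, \ell}(1)$ and the trivalent vertex contributes $[k]_q t^{kx}$ via \cref{3dvertex}. Comparing the two evaluations gives $\W_{kn_\ell, \ell}(1) \cdot [k]_q = (-1)^{k+1}$, hence $\W_{kn_\ell, \ell}(1) = (-1)^{k+1}/[k]_q$ as claimed.

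The main obstacle is the direct computation: pinning down how the sign $(-1)^{k+1}$ emerges from the 3d tropical correspondence formula applied to $k$-fold multicovers, and choosing $\beta_k$ and the tropical configuration carefully so that the combinatorial factors in the gluing produce a clean $[k]_q$ rather than, say, $[k^2]_q$ that a naive application of \cref{3dvertex} with multiplicity-$k$ edges might suggest. As the paper notes immediately before the lemma, one could alternatively derive the formula purely algebraically by induction on $k$ from \cref{W01}, \cref{Wcommutation}, \cref{Wadjoint}, and \cref{Wgrading}; the tropical proof is preferred here to further showcase the gluing formula \cref{eq:tgf}.
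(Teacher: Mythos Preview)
Your proposal has a genuine gap: the two computations you sketch do not actually produce the factors you claim, and you acknowledge as much in your final paragraph without resolving it.

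For the second evaluation, with your choice of $\beta_k$ the three edges meeting at the trivalent vertex all have multiplicity $k$, so \cref{3dvertex} gives $[k^2]_q$, not $[k]_q$. For the first evaluation, the tropical curve at a generic $p$ is a straight line with two ends of multiplicity $k$; the relevant two-leg toric invariant is governed by \cref{toric2leg}, which has no $\hbar$-dependence and no alternating sign. The assertion that $\langle\eta,\beta_k\rangle=(-1)^{k+1}t^{kx}$ is imported from folklore about closed multiple-cover contributions of $(-1,-1)$-curves, but that is not what is being computed here: you have a relative invariant with prescribed contact, and nothing in the paper's toolkit (\cite{3d}, \cref{toric2leg}, \cref{3dvertex}) yields this factor. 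So both sides of your intended equation are unjustified, and the two obstacles you flag are fatal rather than cosmetic.

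The paper's proof avoids both problems by a different choice of contact data. Instead of $k(1,0,0)$ and $kv_f$, it keeps one contact point at $(1,0,0)$ with multiplicity $1$ and takes the other at $v_f-(m-1)n_\ell$. For $m\geq 2$ there are no such curves in the interior, so the generic-$p$ side is simply $0$ --- no multicover computation is needed. On the $z_2(p)=0$ side, the tropical curves are now indexed by \emph{partitions} of $m$: the distinguished vertex is connected to the singular line by several edges of multiplicities $k$ summing to $m$, each contributing a factor $\W_{kn_\ell,\ell}(1)\,[k]_q/k$. (The multiplicity-$1$ ray at $(1,0,0)$ is what makes the vertex factor come out as $[k]_q$ rather than $[k^2]_q$.) Equating the two sides gives, for each $m\geq 1$,
\[
\delta_{m,1}=\sum_{\sum k\mu_k=m}\prod_k\frac{1}{\mu_k!}\left(\W_{kn_\ell,\ell}(1)\,\frac{[k]_q}{k}\right)^{\mu_k},
\]
a triangular system that determines all the weights at once; substituting the claimed values reduces to the elementary identity $\sum_{\sigma\in S_m}\mathrm{sgn}(\sigma)=0$ for $m\geq 2$. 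The key idea you are missing is this asymmetric choice of contact data, which trades a hard direct computation for a trivial one and replaces a single edge by a partition sum.
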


\begin{proof}
Use the same notation as in the proof of \cref{W01}. As before, we have
\[
\W_{kn_\ell,\ell}(1) =\ip{ \eta}{ h_{-kn_\ell}} \in\N \text{ for }k>0\ .
\]
Fix a non-negative integer $m$ and consider connected holomorphic curves with contact data $(1,0,0)$ and $v_f -(m-1)n_\ell$. Let $\beta_m$ be the Poincar\'{e} dual to $\ex X_{v_f-(m-1)n_\ell}\times \{p\} \subset \ex X_{v_f-(m-1)n_\ell}\times \ex X_{(1,0,0)}$. Let us compute the Gromov--Witten invariant $\ip \eta {\beta_m}$. The case $\ip\eta{\beta_1} =1$ was computed in the proof of \cref{W01}. For $m \geq 2$, there are no curves with this contact data in the interior of $Y\times \mathbb{CP}^1$, so choosing $p$ such that $(z_2(p),z_3(p))\in(\mathbb C^*)^2$ gives that $\ip\eta{\beta_m }=0$. Similarly, choosing $p$ so that $z_2(p)$ is infinite, there are no tropical curves with the required contact data satisfying the balancing condition, and we come to the same conclusion that these Gromov--Witten invariants vanish.

However, if we choose $p \in \ex X_{(1,0,0)}$ so that $z_2(p)=0$, there are tropical curves satisfying the required conditions. The relevant tropical curves are labelled by partitions of $m$. These tropical curves have a distinguished vertex, from which emanates the rays in direction $(1,0,0)$ and $v_f-(m-1) n_\ell$, and several monovalent vertices on the singular line, each of which is connected to the distinguished vertex by an edge with derivative $-kn_\ell$. Suppose that there are $\mu_k$ such edges with derivative $-kn_\ell$ for each positive integer~$k$. Then the partition of $m$ has $\mu_k$ parts equal to $k$ for each positive integer~$k$. Such tropical curves have $\prod_k\mu_k!$ automorphisms, which must be accounted for when applying the tropical gluing formula of \cref{eq:tgf}. Applying this tropical gluing formula, along with \cite[Theorem~1.1]{3d} and \cite[Equation~(5)]{3d} gives the following formula for our Gromov--Witten invariants. 
\[
\delta_{m,1}t^x=\ip\eta{\beta_m} =\sum_{\sum k\mu_k=m}\prod_k \frac 1{\mu_k!}\lrb{\W_{kn_\ell,\ell}(1)\frac{[k]_q}{k}}^{\mu_k}
\]

These equations determine $\W_{kn_\ell,\ell}$ completely, so to complete the proof, it suffices to substitute $\W_{kn_\ell,\ell}(1)=\frac {(-1)^{k+1}}{[k]_q}$ into the above equation and to check the resulting identity
\[
\sum_{\sum k\mu_k=m} \frac {(-1)^{\sum (k+1)\mu_k}}{\prod \mu_k! \, k^{\mu_k}}=\delta_{m,1} \ .
\]
To see why this holds, note that the sign in the numerator is the parity of a permutation in the symmetric group $S_m$ with $\mu_k$ cycles of length $k$ for each positive integer $k$. The denominator arises in the formula $m!/ (\prod \mu_k! \, k^{\mu_k})$ for the number of such permutations. So the left side of the equation is simply $\frac{1}{m!}$ multiplied by the sum of the signs of the permutations in $S_m$. The above identity then follows from the observation that for $m \geq 2$, the number of even permutations is equal to the number of odd permutations in $S_m$. 
\end{proof}

\section{The quantum torus Lie algebra} \label{sec:quantumtorus}

The commutation relations from \cref{Wcommutation} identify the Lie algebra generated by the operators $\W_{v,\ell}$ as a subalgebra of the sine Lie algebra, also known as a quantum 2-torus Lie algebra. Below, we first discuss the non-quantum case, then discuss its quantum deformation.

Consider the complexification of the Poisson algebra of a real  2-dimensional symplectic torus $\mathbb R^2/(2\pi \mathbb Z)^2$ with symplectic form $\mathrm{d}\theta_1\wedge \mathrm{d}\theta_2$. This algebra has a dense subalgebra generated by the functions $T_v:=e^{i(v_1\theta_1+v_2\theta_2)}$, with the Poisson bracket given by
\[
\{T_v,T_w\}=(v\wedge w) \, T_{v+w} \ ,
\]
where $v\wedge w:=v_1w_2-v_2w_1$ for $v = (v_1, v_2)$ and $w = (w_1, w_2)$. These relations also describe a dense subalgebra of the Poisson algebra of the holomorphic symplectic manifold $(\mathbb C^*)^2$ with holomorphic symplectic form $\frac{\mathrm{d}z_1}{z_1}\wedge \frac{\mathrm{d}z_2}{z_2}$, and corresponding Poisson bivector $z_1\partial_{z_1}\wedge z_2\partial_{z_2}$, where now $T_v$ corresponds to $z_1^{v_1}z_2^{v_2}$.

The above Poisson algebra is related to the Lie algebra of the topological vertex group of Gross, Pandharipande and Siebert~\cite{tropicalvertex}, used to compute genus $0$ Gromov--Witten invariants of 2-dimensional log Calabi--Yau manifolds. It is interesting that a quantisation of this algebra occurs in our study of arbitrary genus Gromov--Witten invariants of three-dimensional log Calabi--Yau manifolds. Indeed, such a quantisation was suggested in the work of Kontsevich and Soibelman~\cite{kontsevicha}, and has already appeared in the work of Bousseau on higher genus Gromov--Witten invariants of 2-dimensional log Calabi--Yau manifolds~\cite{bou,qtropicalvertex}. Bousseau's work can be interpreted as providing a calculation of Gromov--Witten invariants counting curves contained in 2-dimensional Calabi--Yau submanifolds such as our calculation in \cref{W01,W0} of curves in $Y\times \{1\} \subset Y\times\mathbb{CP}^1$. This is different to our current project studying curves contained in $f^{-1}(0)$ and accordingly, Bousseau's quantum torus is a quantisation of a different torus. These two theories may potentially be connected through the web of correspondences introduced by Bousseau, Brini and van Garrel~\cite{bousseau-brini-vangarrel}.

Discarding $T_0$ by restricting to complex-valued functions on the 2-torus whose integral is $0$, we can obtain a quantum 2-torus Lie algebra\footnote{This Lie algebra is also the commutator on the non-commutative torus $C^*$-algebra with generators $U_1$, $U_2$ satisfying the relation $qU_1U_2= U_2 U_1$. Then, setting $T_v=i q^{v_1v_2/2}U_1^{v_1}U_2^{v_2}$, we obtain the above commutation relations for $T_v$. This non-commutative torus algebra is different from the algebra generated by our operators.} as a quantum deformation\footnote{If we followed standard practice for quantisation, we should be taking $-i$ times these generators, resulting in a factor of $i$ in the commutation relation so that $[T_v,T_w]=i\hbar\{T_v,T_w\} + \cdots$.} of this Poisson algebra, with generators $T_v$ for $v\in\mathbb Z^2\setminus \{0\}$ satisfying the commutation relations
\begin{equation} \label{smallqt}
[T_v,T_w]=[v\wedge w]_q \, T_{v+w} \ ,
\end{equation}
where the quantum integer $[n]_q$ is defined as in \cref{eq:qinteger}.

Given a vector $n_\ell \in \mathbb Z^2$, there is a central extension of this Lie algebra with generators $T_v$ for $v\in\mathbb Z^2\setminus \{0\}$ and a central element $C$, satisfying the commutation relations
\[
[T_v,T_w]=[v\wedge w]_q \, T_{v+w}+\delta_{v+w}( n_\ell\wedge v)C \ .
\]
These are the commutation relations defining a sine Lie algebra, also known as the quantum 2-torus Lie algebra. More canonically, there is a central extension of this algebra with generators $T_v$ for $v\in\mathbb Z^2\setminus \{0\}$ and central elements $C_1$ and $C_2$, satisfying the commutation relation
\begin{equation} \label{bigqt}
[T_v,T_w] = [v\wedge w]_q \, T_{v+w}+\delta_{v+w}(v_1C_1+v_2C_2)\ .
\end{equation}

Define the small quantum torus Lie algebra $\Tsmall$ to be the completion of the Lie algebra over $\N$ generated by $T_v$ for $v\in\mathfrak t^2_{\mathbb Z}\setminus \{0\}$, satisfying the commutation relations of \cref{smallqt}. Furthermore, define the big quantum torus Lie algebra $\Tlarge$ as the central extension of $\Tsmall$ by $\mathfrak t^2$, defined by the commutation relations
\[
[T_{v},T_w]=[v\wedge w]_q \, T_{v+w} + \delta_{v+w}v \ .
\]

So on $\mathcal H^+_\ell$, \cref{Wcommutation} gives that the operators $\W_{v,\ell}$ provide a projective representation of $\Tsmall$, and a representation of $\Tlarge$ so that $v\in\mathfrak t^2_{\mathbb Z}\subset \Tlarge$ acts by multiplying by the constant $n_\ell \wedge v$. Using the $\mathbb Z$-grading in which $\W_{v,\ell}$ has degree $n_\ell \wedge v$, this representation is quasi-finite, with basis for the degree $n$ subspace given by 
\begin{equation}\label{Walphap}\alpha_\ell^{+\mathbf p}=\lrb{\prod_k W_{-kw_\ell,\ell}^{ p_k}}(1)\end{equation}  with $\sum_k k p_k=n$; so $\mathbf p$ represents a partition of $n$ in which there are $p_k$ parts equal to $k$. Moreover, the vanishing result of \cref{Wgrading} and the weights from \cref{W0} identify this as a highest weight representation.

These conditions determine the operators $\W_{v,\ell}$. In light of \cref{Walphap}, it suffices to determine the product of such operators acting on $1$. Given $v_1, v_2, \ldots, v_n$, the vanishing result of \cref{Wgrading} implies that $\prod_i \W_{v_i,\ell}(1)$ has degree $-\sum_i v_i\wedge n_\ell$, and therefore 
\begin{equation}\label{Wprod1}
\prod_{i}\W_{v_i,\ell}(1)=\sum_{\mathbf p}c_{\mathbf p}\alpha_\ell^{+\mathbf p} \ ,
\end{equation} 
where the sum is over partitions $\mathbf p$ of $-\sum_i v_i\wedge n_\ell$. The coefficient $c_{\mathbf p}$ is then determined by applying the operator $\frac 1{\abs {G_{\mathbf p}}}\prod_k \W_{kn_\ell,\ell}^{ p_k}$, and we can compute 
\begin{equation}\label{Wprod2}
c_{\mathbf p}=\frac 1{\abs {G_{\mathbf p}}}\prod_i \W_{kn_\ell,\ell}^{ p_k}\prod_i\W_{v_i,\ell}(1) \ ,
\end{equation}
by commuting the negative degree operators to the right using \cref{Wcommutation}, and discarding terms in which negative degree operators act on $1$ until we are left with an expression in degree $0$ operators, which is then determined by \cref{W0}.

\begin{remark}\label{translation}
After tensoring with $\mathbb C$, we can also identify this representation as arising in the work of Okounkov and Pandharipande on the Gromov--Witten/Hurwitz correspondence for curves~\cite{OP1}. In particular, we use a version of the boson-fermion correspondence, and place our bosonic Fock space $\mathcal H^+_\ell\otimes\mathbb C$ within a fermionic Fock space represented by an infinite wedge space. The infinite wedge space is acted on by operators $\mathcal E_{r}(z)$ from~\cite[Section 2.2]{OP1}. Then, for $r$ and $k$ in $\mathbb Z$, the operators 
\[\frac{(-1)^{k}}{i} \mathcal E_r(i\hbar k):=\sum_{j\in\mathbb Z+\frac 12}\frac{(- e^{i\hbar (j-r/2) })^k}{i} E_{j-r,j}+\frac{(-1)^{k+1}}{2\sin k\hbar/2}\delta_{r,0}
\]
obey the same commutation relations as our operators $\W_{rw_\ell+kn_\ell,\ell}$, and define a highest weight representation with the same weights. 

In particular, equation (2.17) of \cite{OP1} gives that
\[
\left[\frac{(-1)^{k_1}}{i} \mathcal E_{r_1}(i\hbar k_1),\frac {(-1)^{k_2}}{i} \mathcal E_{r_2}(i\hbar k_2)\right]=\frac{(-1)^{k_1+k_2}}{i^2}\zeta\lrb{\det \left[ \begin{smallmatrix} r_1 & i\hbar k_1\\ r_2 & i\hbar k_2 \end{smallmatrix}\right]} \mathcal E_{r_1+r_2}(i\hbar(k_1+k_2)) \ ,
\]
where $\zeta(z):=e^{ z/2}-e^{-z/2}$, so $[n]_q=\zeta(i\hbar n)/i$, and we can rewrite the above expression as 
\[
\left[\frac{(-1)^{k_1}}{i} \mathcal E_{r_1}(i\hbar k_1),\frac {(-1)^{k_2}}{i} \mathcal E_{r_2}(i\hbar k_2)\right]=[r_1k_2-r_2k_1]_q\frac{(-1)^{k_1+k_2}}{i}\mathcal E_{r_1+r_2}(i\hbar(k_1+k_2)) \ ,
\]
which agrees with \cref{Wcommutation} when $(r_1,k_1)\neq(-r_2,-k_2)$. In the case that $(r_1,k_1)=(-r_2,-k_2)$, the right side of the above commutation relation is interpreted in \cite{OP1} as 
\[
\left[\frac{(-1)^{k_1}}{i} \mathcal E_{r}(i\hbar k),\frac {(-1)^{-k}}{i} \mathcal E_{-r}(-i\hbar k)\right]=(-1)^{k+k+1}r=-r \ ,
\]
again agreeing with \cref{Wcommutation}. Moreover, $(-1)^k\mathcal E_0(i\hbar k)/i$ acts on the vacuum vector with weight $(-1)^{k+1}/( 2\sin k\hbar/2 )=(-1)^{k+1}/[k]_q$, agreeing with the weights computed in \cref{W0}. 
\end{remark}

\section{Framing change} \label{sec:framing}

Suppose that we choose a different compactification $\bar X$ of $\mathbb C^2\times \mathbb C^*$ so that the two legs $\ell_1$ and $\ell_2$ have different framings, so $w_{\ell_1}\neq -w_{\ell_2}$. Thinking of $\ell_1$ as incoming and $\ell_2$ as outgoing, and using notation from \cref{adef,projdef2}, we get a bounded $\N$-module homomorphism
\begin{align*}
& F_{\bar X}: \mathcal H^+_{\ell_1}\longrightarrow \mathcal H^-_{\ell_2}=\mathcal H^{+}_{\ell_2^-} \\
& F_{\bar X}\alpha=(\an_{\alpha}Z_{\bar X})_{\ell_2^-}^+ \ .
\end{align*}
So $F_{\bar X}\alpha$ is the projection of $\an_{\alpha}Z_{\bar X}$ onto $\mathcal H^-_{\ell_2}=\mathcal H^+_{\ell_2^-}$. This homomorphism depends on the symplectic form chosen on $\bar X$, but is still bounded when we send the symplectic form to $0$, so define $F_{\bar X}$ in this canonical case where the symplectic form is $0$. Note that $F_{\bar X}1=1$, because, after an abstract perturbation, there are no stable curves with contact only on the boundary of $\bar X$ corresponding to $\ell_2$, so the empty curve is the only `curve' contributing to this invariant.

We will think of $F_{\bar X}$ as a framing change, because $n_{\ell_2^-}=-n_{\ell_2}=n_{\ell_1}$, but $w_{\ell_2^-}\neq w_{\ell_1}$. The gluing formula of \cref{glue} applied to the toric degeneration pictured in \cref{f12} implies that $F_{\bar X}$ is an isomorphism, and preserves the positive definite inner product $\pp\cdot\cdot$ from \cref{ppdef}.

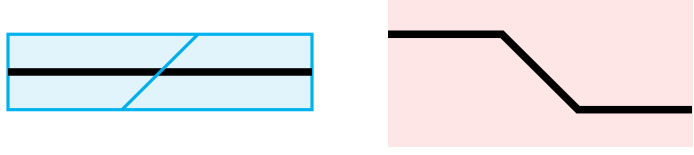
\begin{figure}[ht!]
\centering
\begin{tikzpicture}
\begin{scope}
\filldraw[toric style] (0,0.5) rectangle (4,1.5);
\draw[wall style] (0,1) -- (4,1);
\draw[cyan, very thick] (1.5,0.5) -- (2.5,1.5);
\end{scope}
\begin{scope}[xshift=5cm]
\fill[box style] (0,0) rectangle (4,2);
\draw[wall style] (0,1.5) -- (1.5,1.5) -- (2.5,0.5) -- (4,0.5);
\end{scope}
\end{tikzpicture}
\caption{A toric degeneration of $Y\times\mathbb{CP}^1$ into two pieces, both isomorphic to $\bar X$, leading to the observation that $F_{\bar X}$ is an isomorphism which preserves the inner product.}
\label{f12}
\end{figure}

In fact $F_{\bar X}$ is an isomorphism of representations.

\begin{prop} \label{framing change}
The map $F_{\bar X}$ is an isomorphism of representations. In particular, we have
\[
F_{\bar X}\circ \W_{v,\ell_1}=\W_{v,\ell_2^-}\circ F_{\bar X} \qquad \text{and} \qquad F_{\bar X}(1)=1 \ .
\]
\end{prop}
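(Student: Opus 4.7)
The $\N$-module isomorphism property of $F_{\bar X}$ and the equality $F_{\bar X}(1)=1$ were already established in the paragraphs preceding the statement, so only the intertwining
\[
F_{\bar X}\circ\W_{v,\ell_1}=\W_{v,\ell_2^-}\circ F_{\bar X}, \qquad v\in\mathfrak t^2_{\mathbb Z}\setminus\{0\}
\]
remains. Combined with $F_{\bar X}(1)=1$ and the fact that each $\mathcal H^+_\ell$ is a highest weight representation generated from $1$ by the lowering operators $\W_{-kw_\ell,\ell}$, this intertwining will upgrade $F_{\bar X}$ to an isomorphism of $\Tlarge$-representations.

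The plan, following the pattern of \cref{zc} and \cref{Wcommutation}, is to compute a single Gromov--Witten invariant
\[
\langle\exp\eta,\,\alpha\,\beta\,h_{-v}\rangle, \qquad \alpha\in\mathcal H^+_{\ell_1},\ \beta\in\mathcal H^+_{\ell_2^-}
\]
in two different ways using two different refined forms representing the class $h_{-v}\in \rh^2(\ex X_{-v})$. Since the contact data $\alpha\beta$ along the legs is supported on $\mathfrak t^2_{\mathbb Z}\setminus\{0\}$, all connected curves contributing to this invariant lie in $f^{-1}(0)$, and their tropical parts are confined to the singular locus of $\totb{\expl(\bar X,D)}$ apart from a single ray in direction $-v$ rigidified by the constraint $h_{-v}$. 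As in the proof of \cref{zc}, I choose a refined form representing $h_{-v}$ whose tropical support is a one-dimensional linear subspace in the space of rays emanating from the singular locus, retaining the freedom to translate this ray.

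Two translations realise the two sides of the intertwining. When the ray meets the singular locus on the segment adjacent to $\ell_1$ (as on the left of \cref{f01}), I refine $\expl(\bar X,D)$ to place a vertex at the intersection point, then apply the tropical gluing formula \cref{eq:tgf}; this factorises the invariant into a local vertex contribution identified with $\W_{v,\ell_1}$ acting on $\alpha$, followed by the remaining wall data which is by definition $F_{\bar X}$, producing $\langle\beta,\,F_{\bar X}\W_{v,\ell_1}(\alpha)\rangle$. Translating the ray to meet the singular locus on the segment adjacent to $\ell_2^-$ (as on the right of \cref{f01}) instead yields the wall transport $F_{\bar X}$ first, followed by a local contribution identified with $\W_{v,\ell_2^-}$, producing $\langle\beta,\,\W_{v,\ell_2^-}F_{\bar X}(\alpha)\rangle$. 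Since both computations evaluate the same cohomological pairing, varying $\beta$ over the orthogonal basis of $\mathcal H^+_{\ell_2^-}$ gives the desired identity.

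The main obstacle, exactly as in \cref{zc} and \cref{Wcommutation}, is to verify that no additional rigid tropical curves contribute when the constraint ray is slid across the bend in the wall where the framings $w_{\ell_1}$ and $w_{\ell_2^-}$ differ. I expect to handle this by the same mechanism used there: any tropical curve with a vertex joined to the singular locus by two distinct edge-paths contributes zero because the self-intersection of $f^{-1}(0)$ vanishes, so only the two families of rigid curves described above remain. The degenerate case in which $v$ is a multiple of $w_{\ell_1}$ (so the constraint ray coincides with a leg, and the two translations cannot be geometrically separated) needs a separate treatment; this I plan to reduce to the non-degenerate cases via the Jacobi identity applied to the commutation relations of \cref{Wcommutation}, in the same spirit as the final step of that proof, using the explicit formulas \cref{W+,W-} together with $F_{\bar X}(1)=1$ as the base input.
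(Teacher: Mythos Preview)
Your approach is essentially the paper's: compute $Z_{\bar X,h_{-v}}$ with two choices of constraint, read off $F_{\bar X}\circ\W_{v,\ell_1}$ and $\W_{v,\ell_2^-}\circ F_{\bar X}$ from the two resulting tropical pictures, and extend to all $v$ via the commutation relations of \cref{Wcommutation}.

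The one place where the paper is sharper is the case analysis. You anticipate trouble when the constraint ray is slid across the bend and plan to rule out extra contributions via the vanishing self-intersection of $f^{-1}(0)$; you then single out $v$ proportional to $w_{\ell_1}$ as the degenerate case needing Jacobi. The paper sidesteps the bend issue entirely: it restricts the direct tropical argument to those $v$ with $w_{\ell_1}\wedge v$ and $w_{\ell_2^-}\wedge v$ of the \emph{same} sign (both positive or both negative). For such $v$ the ray in direction $-v$ lies on one side of the singular locus throughout, so the two constraint positions (on the $\ell_1$ segment versus the $\ell_2^-$ segment) each give a clean single-vertex gluing with no extra rigid curves to exclude. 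Since these $v$ already generate $\mathfrak t^2_{\mathbb Z}$ under addition, the intertwining for the mixed-sign case and for $v$ proportional to either framing follows automatically from \cref{Wcommutation}, with no separate Jacobi manoeuvre needed. Your self-intersection mechanism is the right tool for eliminating non-tree-like tropical curves, but it does not by itself address what happens when the ray passes through the bend vertex; the paper simply never needs to ask.
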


\begin{proof}
We have already observed that $F_{\bar X}(1)=1$, so it remains to prove that $F_{\bar X}$ intertwines the operators $\W_{v,\ell_1}$ and $\W_{v,\ell_2^-}$.

To prove this, first suppose that 
\[
w_{\ell_1}\wedge v>0 \qquad \text{and} \qquad w_{\ell_2^-}\wedge v>0 \ ,
\]
and compute $Z_{\bar X,h_{-v}}$. We set  the symplectic form to zero so that we can ignore the symplectic energy contributions, which are determined topologically and uninteresting in this case. As in the proof of \cref{zc}, we can choose the constraint corresponding to $h_{-v}$ in two different ways so that different tropical curves contribute to the calculation of $Z_{\bar X,h_{-v}}$. In particular, one constraint leads to the tropical curves on the left in \cref{f13}, and a different constraint leads to the tropical curves on the right.

\begin{figure}[ht!]
\centering
\begin{tikzpicture}
\begin{scope}
\fill [box style] (0,0) rectangle (4,2);
\draw[wall style] (0,1) -- (2,1) -- (3,0);
\draw[curve style] (2,0) -- (2.3333,0.6666);
\draw[wallcurve style] (0,1) -- (2,1) -- (3,0);
\end{scope}
\node at (4.5,1) {$=$};
\begin{scope}[xshift=5cm]
\fill [box style] (0,0) rectangle (4,2);
\draw[wall style] (0,1) -- (2,1) -- (3,0);
\draw[curve style] (0.5,0) -- (1,1);
\draw[wallcurve style] (0,1) -- (2,1) -- (3,0);
\end{scope}
\end{tikzpicture}
\caption{Two ways of calculating $Z_{\bar X,h_{-v}}$ as $F_{\bar X}\circ\W_{v,\ell_1}$ or $\W_{v,\ell_2^-}\circ F_{\bar X}$.}
\label{f13}
\end{figure}
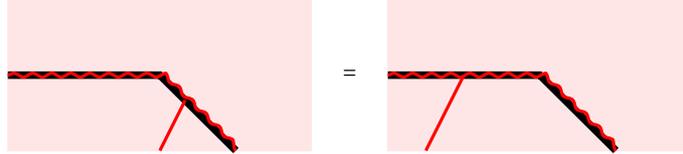

For the constraint on the left, applying the tropical gluing formula of \cref{eq:tgf} with the considerations noted in the proof of \cref{zc} gives that for $\alpha\in\mathcal H^+_{\ell_1}$ and $\beta\in\mathcal H^+_{\ell_2}$, 
\[
\ip{ Z_{\bar X,h_{-v}}}{\alpha\beta}=\ip{ \exp\eta}{h_{-v}\alpha\beta }=\ip{\beta}{F_{\bar X}\circ \W_{v,\ell_1}\alpha} \ ,
\]
whereas using the constraint on the right gives
\[
\ip{ Z_{\bar X,h_{-v}}}{\alpha\beta}=\ip{\beta}{\W_{v,\ell^-_2}\circ F_{\bar X}\alpha} \ .
\]
So we have that $F_{\bar X}\circ \W_{v,\ell_1}=\W_{v,\ell_2^-}\circ F_{\bar X}$ in this case.

Similarly, if 
\[
w_{\ell_1}\wedge v>0 \quad \text{and} \quad w_{\ell_2^-}\wedge v>0 \ ,
\]
we obtain that
\[
\ip{ Z_{\bar X,h_{-v}}}{\alpha\beta}=\ip{\beta}{F_{\bar X}\circ- \W_{v,\ell_1}\alpha}=\ip{\beta}{-\W_{v,\ell^-_2}\circ F_{\bar X}\alpha} \ .
\]
So we also have the required relation in this case.

\begin{figure}[ht!]
\centering
\begin{tikzpicture}
\begin{scope}
\fill [box style] (0,0) rectangle (4,2);
\draw[wall style] (0,1) -- (2,1) -- (3,0);
\draw[curve style] (0,1.5) -- (2.6666,0.3333);
\draw[wallcurve style] (0,1) -- (2,1) -- (3,0);
\end{scope}
\node at (4.5,1) {$=$};
\begin{scope}[xshift=5cm]
\fill [box style] (0,0) rectangle (4,2);
\draw[wall style] (0,1) -- (2,1) -- (3,0);
\draw[curve style] (0,1.5) -- (1,1);
\draw[wallcurve style] (0,1) -- (2,1) -- (3,0);
\end{scope}
\end{tikzpicture}
\caption{In this pictured case, $Z_{\bar X,h_{-v}}=0$ and $F_{\bar X}\circ\W_{v,\ell_1}-\W_{v,\ell_2^-}\circ F_{\bar X}=0$.}
\label{f14}
\end{figure}
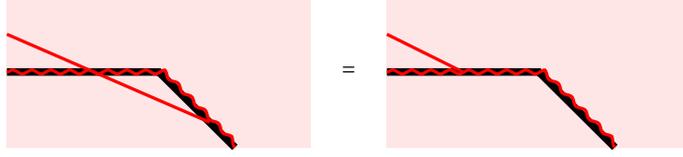

The case that $w_{\ell_i}\wedge v$ have different signs is pictured in \cref{f14}, but we do not need this to complete the proof. We already have the required relation for a set of vectors $v$ generating $\mathbb Z^2$ under addition. As the other operators are obtained as commutators of these operators using \cref{Wcommutation}, it follows that these other operators also satisfy the required relation
\[
F_{\bar X}\circ \W_{v,\ell_1}=\W_{v,\ell_2^-}\circ F_{\bar X} \ . \qedhere
\]
\end{proof}

Note also that $F_{\bar X}$ preserves the $\mathbb Z$-grading, so 
\[
F_{\bar X}\circ \Prop_{x,\ell_1}=\Prop_{x,\ell_2^-}\circ F_{\bar X} \ .
\]

Moreover, \cref{Wadjoint} and the commutation relation implies that $F_{\bar X}$ preserves the inner product. 
\begin{equation}
\pp{\alpha}{\beta}=\pp{F_{\bar X}\alpha}{F_{\bar X}\beta}
\end{equation}
This is because \cref{Walphap} implies that it suffices to check this equation for $\alpha$ and $\beta$ products of the operators $\W_{v,\ell_1}$, then \cref{Wadjoint} allows us to compute this as the vacuum expectation of a product of these operators, which can then be computed using the commutation relations. 

\cref{framing change} completely determines the framing change. Moreover, it motivates us to think of $\mathcal H^+_\ell$ as a highest weight representation of the big quantum torus Lie algebra $\Tlarge$, depending only on $n_\ell$, and not $w_\ell$. From this perspective, the invariant structure of $\mathcal H_\ell^+$ is as a highest weight representation of $\Tlarge$, with its $\N$-module structure, the positive definite inner product $\pp{\cdot}{\cdot}$ and the vacuum vector $1\in\mathcal H_\ell^+$; only the multiplication structure, the integration pairing,   and the preferred basis $\alpha_\ell^{+\mathbf p}$ depend on the framing $w_\ell$.

\section{The topological vertex} \label{sec:tv}

The case when $X=\mathbb C^3$ and $f=z_1z_2z_3$ is of particular interest. In this case, the toric graph has three legs and a single vertex. Choose a compactification $\bar X$ of $\mathbb C^3$ such as the one pictured in \cref{fig:momentpolytope,f15}.

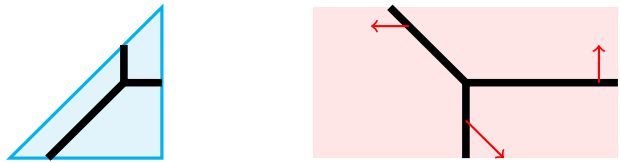
\begin{figure}[ht!]
\centering
\begin{tikzpicture}
\begin{scope}
\filldraw[toric style] (1,0) -- (3,0) -- (3,2) -- cycle;
\draw[wall style] (2.5,1.5) -- (2.5,1) -- (3,1);
\draw[wall style] (2.5,1) -- (1.5,0);
\end{scope}
\begin{scope}[xshift=5cm]
\fill[box style] (0,0) rectangle (4,2);
\draw[wall style] (2,0) -- (2,1) -- (4,1);
\draw[wall style] (1,2) -- (2,1);
\draw[->, red, thick, xshift=0] (2,0.5) -- (2.5,0);
\draw[->, red, thick, xshift=0] (3.75,1) -- (3.75,1.5);
\draw[->, red, thick, xshift=0] (1.25,1.75) -- (0.75,1.75);
\end{scope}
\end{tikzpicture}
\caption{A compactification of $\mathbb C^3$ with framing vectors $(1,0), (-1,1), (0,-1)$, and respective normal vectors $(0,1),(-1,0),(1,-1)$ shown in red.}
\label{f15}
\end{figure}

Regardless of what compactification $\bar X\supset X$ is chosen, the symplectic energy of a holomorphic curve in $\bar X$ is determined by its contact data. For $v\in \mathfrak t^2_{\mathbb Z}$, let $x(v)>0$ be the integral of $\omega$ over a holomorphic sphere in $\bar X$ with contact data $\mathbf p= 1_v$. (Such holomorphic spheres can be found as the closures of orbits of the $\mathbb C^*$-action of weight $v$ on $f^{-1}(0)$.) Then the symplectic energy of a holomorphic curve with contact data $\mathbf p$ supported on $\mathfrak t^2_{\mathbb Z}$ is 
\[
x(\mathbf p):=\sum_v x(v) \, \mathbf p(v)\ .
\]

In this section, we refer to the three legs using the numbers $1$, $2$ and $3$, considered modulo 3 and numbered anticlockwise in the above picture so that $n_\ell\wedge n_{\ell+1}=1$. Consider
\[
Z_{\bar X}\in\mathcal H^-_1\otimes\mathcal H^-_2\otimes\mathcal H^-_3:=\mathcal H^+_{1^-}\otimes\mathcal H^+_{2^-}\otimes\mathcal H^+_{3^-} \ .
\]
Each $\mathcal H^+_{\ell^-}$ has a natural $\mathbb Z$-grading so that the space of elements of degree $n$ has a basis labelled by partitions of $n$. Using the corresponding $\mathbb Z^3$-grading, we can take a limit to complete $\mathcal H^+_{1^-}\otimes\mathcal H^+_{2^-}\otimes\mathcal H^+_{3^-}$ to an $\N$-module 
\[
\widehat{\mathcal H^+_{1^-}\otimes\mathcal H^+_{2^-}\otimes\mathcal H^+_{3^-}}
\]
consisting of infinite sums in $\mathcal H^+_{1^-}\otimes\mathcal H^+_{2^-}\otimes\mathcal H^+_{3^-}$ with a single term in each multi-degree. Then setting $t=1$ in $Z_{\bar X}$ to remove dependence on the symplectic form gives
\[
\T \in \widehat{\mathcal H^+_{1^-}\otimes\mathcal H^+_{2^-}\otimes\mathcal H^+_{3^-}} \ ,
\]
so that
\begin{equation} \label{ZT}
Z_{\bar X}=\Prop_{x(w_1),1^-}\Prop_{x(w_2),2^-}\Prop_{x(w_3),3^-} \T \ .
\end{equation}

We have that each $\mathcal H^+_{\ell^-}$ is a highest weight representation of the big quantum torus Lie algebra $\Tlarge$ using the operators $\W_{v,\ell^-}$, and each of these operators induces a corresponding operator on $\widehat{\mathcal H^+_{1^-}\otimes\mathcal H^+_{2^-}\otimes\mathcal H^+_{3^-}}$. Using the operators $(\W_{v,1^-}+\W_{v,2^-}+\W_{v,3^-})$ gives a $\Tlarge$-representation on this $\N$-module. Moreover, the central elements $v\in\mathfrak t^2\subset \Tlarge$ act on $\mathcal H^+_{i^-}$ by multiplying by the number $v\wedge n_{i^-}$, so they act trivially on $\widehat{\mathcal H^+_{1^-}\otimes\mathcal H^+_{2^-}\otimes\mathcal H^+_{3^-}}$ because $n_{1^-}+n_{2^-}+n_{3^-}=0$. So we actually have a representation of the small quantum torus Lie algebra $\Tsmall$. The following theorem tells us that $\T$ is fixed by the action of the quantum torus.

\begin{thm} \label{tvs}
For any $v\in\mathfrak t^2_{\mathbb Z}$, we have
\[
\left( \W_{v,1^-} + \W_{v,2^-} + \W_{v,3^-} \right) \T = 0 \ .
\]
\end{thm}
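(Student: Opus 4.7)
The plan is to compute the Gromov--Witten invariant $(\an_{h_{-v}} \exp \eta)^-$ in two different ways using the tropical gluing formula of \cref{eq:tgf}, exploiting the freedom to choose the refined-cohomology representative of the class $h_{-v} \in \rh^2(\ex X_{-v})$. This mirrors the strategy employed in the proofs of \cref{zc} and \cref{Wcommutation}. Since passing from $Z_{\bar X}$ to $\T$ amounts to stripping off the propagation operators $\Prop_{x(w_i), i^-}$ (as in \cref{ZT}), and since these commute with each $\W_{v, \ell^-}$ up to a power of $t$ (by \cref{PWc}), it suffices to work with the symplectic form set to zero and establish the analogous statement for $Z_{\bar X}$ directly.

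Each refined-cohomology representative of $h_{-v}$ corresponds to a 1-dimensional linear family of rays in direction $-v$ in the singular affine space $\totb{\expl(\bar X, D)}$, and translating this family across the unique vertex of the toric graph of $\bar X$ changes which tropical configurations contribute. Fix such a representative whose constraint ray is positioned on one side of this vertex. As in the proof of \cref{zc}, the tropical gluing formula reduces the invariant to the contribution of tropical curves consisting of a vertex contribution attached to an edge in direction $-v$ that enters the singular locus along some leg $\ell$. The vertex contribution is essentially $\T$, while the leg contribution yields $\pm \W_{v, \ell^-}$ acting on this vertex factor, with the sign determined by \cref{wdef}. Depending on which side of the vertex the constraint ray lies, different subsets of the three legs can be reached by the ray, so different subsets of $\{\W_{v, 1^-}, \W_{v, 2^-}, \W_{v, 3^-}\}$ contribute.

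Translating the constraint ray from one side of the vertex to the other replaces one subset of contributing legs by its complement; see \cref{f02} for the generic case in which one side produces a single leg contribution while the other produces two. Equating the two computations and carefully tracking the signs in \cref{wdef} yields the relation
\[
\left( \W_{v, 1^-} + \W_{v, 2^-} + \W_{v, 3^-} \right) \T = 0 \ .
\]
The signs are consistent because $n_{1^-} + n_{2^-} + n_{3^-} = 0$: the three quantities $w_{\ell^-} \wedge v$ cannot all carry the same sign, and the sign rules of \cref{wdef} are calibrated precisely so that pushing the constraint ray across the vertex flips exactly the right signs to produce a vanishing sum.

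The main obstacle is the case analysis of signs. Depending on the direction of $v$ relative to the three legs, one must verify the equation in each of several geometric configurations, analogous to the numerous cases in the proof of \cref{Wcommutation} (\cref{fig:comproof1,fig:comproof2,fig:comproof3,fig:comproof4,fig:comproof5,fig:comproof6}). Degenerate situations in which $v$ is a multiple of some framing vector $w_{\ell^-}$ or normal vector $n_{\ell^-}$ require separate treatment; these can be handled by invoking the Jacobi identity together with the cases already established, exactly as in the closing paragraphs of the proof of \cref{Wcommutation}.
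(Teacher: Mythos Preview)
Your proposal is correct and follows essentially the same approach as the paper: compute $Z_{\bar X,h_{-v}}$ (with $t=1$) in two ways by positioning the constraint ray on different sides of the trivalent vertex, obtaining $\W_{v,2^-}\T$ versus $-\W_{v,1^-}\T-\W_{v,3^-}\T$ for a generic $v$, then extend to all $v$ via the Lie algebra structure. The paper differs only in minor details: it fixes explicit sign conditions on $v$ rather than doing a full case analysis, it spends a paragraph explaining why the bivalent vertex where the ray crosses another leg of the singular locus contributes trivially, and it extends to all $v$ directly from the observation that the generic vectors generate $\mathfrak t^2_{\mathbb Z}$ under addition (so the commutation relations suffice) rather than invoking the Jacobi identity for degenerate cases.
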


\begin{proof}
This follows by calculating $Z_{\bar X,h_{-v}}$, while setting $t=1$ so that we can ignore the symplectic area of curves.

First, suppose that 
\begin{equation} \label{vpos}
w_{1}\wedge v>0, \ w_3\wedge v>0 \text{ and } w_2\wedge v<0 \ .
\end{equation}
As in the proof of \cref{zc}, we can choose the constraint corresponding to $h_{-v}$ so that contributing tropical curves to $Z_{\bar X,h_{-v}}$ are either as depicted in the two pictures on the left in \cref{f16}, or alternatively, as depicted on the right. So for these tropical curves, all edges are either along the singular locus or along a ray in the direction $-v$ starting from the singular locus. Moreover, all vertices are either at the vertex at the centre of the singular locus, at the start of this ray, or are a vertex with valency two that subdivides a ray into two edges, where the ray crosses the singular locus. Other tropical curves do not contribute as they either are not rigid after being constrained, or are eliminated using the tropical gluing formula and the observation that the self intersection of $f^{-1}(0)$ is $0$.

\begin{figure}[ht!]
\centering
\begin{tikzpicture}
\begin{scope}
\fill [box style] (0,0) rectangle (4,2);
\draw[wall style] (1,2) -- (2,1) -- (4,1);
\draw[wall style] (2,0) -- (2,1);
\draw[curve style] (1,0) -- (2,0.5);
\draw[wallcurve style] (1,2) -- (2,1) -- (4,1);
\draw[wallcurve style] (2,0) -- (2,1);
\end{scope}
\node at (4.5,1) {$+$};
\begin{scope}[xshift=5cm]
\fill [box style] (0,0) rectangle (4,2);
\draw[wall style] (1,2) -- (2,1) -- (4,1);
\draw[wall style] (2,0) -- (2,1);
\draw[curve style] (1,0) -- (3,1);
\draw[wallcurve style] (1,2) -- (2,1) -- (4,1);
\draw[wallcurve style] (2,0) -- (2,1);
\end{scope}
\node at (9.5,1) {$=$};
\begin{scope}[xshift=10cm]
\fill [box style] (0,0) rectangle (4,2);
\draw[wall style] (1,2) -- (2,1) -- (4,1);
\draw[wall style] (2,0) -- (2,1);
\draw[curve style] (0,1) -- (1.3333,1.6666);
\draw[wallcurve style] (1,2) -- (2,1) -- (4,1);
\draw[wallcurve style] (2,0) -- (2,1);
\end{scope}
\end{tikzpicture}
\caption{Two ways of calculating $Z_{\bar X,h_{-v}}$ as $-\W_{v,3^-} \T - \W_{v,1^-} \T$ or $\W_{v,2^-} \T$}
\label{f16}
\end{figure}

Applying the tropical gluing formula of \cref{eq:tgf}, with the considerations discussed in the proof of \cref{zc}, gives the equation
\[
Z_{\bar X,h_{-v}} = -\W_{v,3^-} \T - \W_{v,1^-} \T
\]
with one constraint, and 
\[
Z_{\bar X,h_{-v}} = \W_{v,2^-} \T
\]
with a different constraint. To see that the contribution of tropical curves depicted in the middle picture gives the term $-\W_{v,1^-} \T$, we must understand why we can ignore the extra bivalent vertex where our tropical curve crosses the singular locus, as we did when interpreting \cref{f14}. Call this bivalent vertex $q$. The contribution of this vertex $q$ in our tropical gluing formula is a Gromov--Witten invariant $\eta^{[\gamma_q]}$ of the exploded manifold $\expl(\bar X)\tc{q}$, which is isomorphic to the exploded manifold $\expl(Y\times \mathbb{CP}^1)$ considered in \cref{sec:product}. The key calculation is the step gluing the contribution of this vertex along the edge $e$ joining it to the singular locus. Calculate by first gluing along all internal edges apart from $e$, leaving one final integral where $\eta^{[\gamma_q]}$ is paired with $h_{-v}$ at one contact point, and a multiple $m$ of the Poincar\'{e} dual of $f^{-1}(0)$ at the other contact point, coming from the glued-together contribution of the rest of the curve. This Gromov--Witten invariant is then easily calculated to be $m$ by replacing the Poincar\'{e} dual of $f^{-1}(0)$ with the Poincar\'{e} dual of $f^{-1}(2)$. The upshot is that $\an_{h_{-v}}\eta^{[\gamma_q]}$ satisfies \cref{hvdef}, so satisfies the conditions required of $h_{-v}$. So this vertex has no effect on our calculations.

So for $v$ satisfying \cref{vpos}, we have the required equation
\[
\left( \W_{v,1^-} + \W_{v,2^-} + \W_{v,3^-} \right) \T = 0 \ .
\]
Similarly, we have the required equation for all $v$ satisfying \cref{vpos} with the legs $(1,2,3)$ cyclically permuted. Such vectors $v$ generate $\mathfrak t^2_{\mathbb Z}$ under addition, so the other operators $\W_{w,\ell^-}$ are obtained using \cref{Wcommutation} as commutators of operators satisfying the above equation. Therefore the required equation holds for all $v\in\mathfrak t^2_{\mathbb Z}$.
\end{proof}

\cref{tvs}, and \cref{PWc,ZT} imply the following modification when we include the symplectic energy of curves.

\begin{cor} \label{Zsym}
For $X=\mathbb C^3$, we have
\[
\left( t^{x(w_1)(v\wedge n_1) }\W_{v,1^-}+t^{x(w_2)(v\wedge n_2) }\W_{v,2^-}+t^{x(w_3)(v\wedge n_3) }\W_{v,3^-} \right) Z_{\bar X} = 0 \ .
\]
\end{cor}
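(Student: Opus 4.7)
This corollary should follow directly from \cref{tvs} by conjugating that identity with the propagation operators relating $Z_{\bar X}$ and $\T$ via \cref{ZT}. The plan is therefore short, and essentially amounts to applying the commutation relation \cref{PWc}.

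Since $\W_{v,\ell^-}$ acts only on the tensor factor $\mathcal H^+_{\ell^-}$, it commutes with the two propagators $\Prop_{x(w_{\ell'}),\ell'^-}$ for $\ell'\neq\ell$. The only nontrivial move is commuting $\W_{v,\ell^-}$ past the propagator on its own factor; by \cref{PWc} and the sign convention $n_{\ell^-}=-n_\ell$ recorded in \cref{sec:framing}, this introduces the scalar $t^{-(v\wedge n_\ell)x(w_\ell)}$. Consequently
\[
\W_{v,\ell^-}Z_{\bar X}=t^{-(v\wedge n_\ell)x(w_\ell)}\,\Prop_{x(w_1),1^-}\Prop_{x(w_2),2^-}\Prop_{x(w_3),3^-}\,\W_{v,\ell^-}\T.
\]

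Multiplying by $t^{(v\wedge n_\ell)x(w_\ell)}$ and summing over $\ell\in\{1,2,3\}$ then gives
\[
\sum_\ell t^{(v\wedge n_\ell)x(w_\ell)}\W_{v,\ell^-}Z_{\bar X}=\Prop_{x(w_1),1^-}\Prop_{x(w_2),2^-}\Prop_{x(w_3),3^-}\sum_\ell\W_{v,\ell^-}\T=0,
\]
where the final equality is \cref{tvs}. There is no serious obstacle here beyond keeping track of the sign convention $n_{\ell^-}=-n_\ell$; the geometric content sits entirely inside \cref{tvs}, and \cref{Zsym} is merely its symplectically weighted repackaging, with the $t$-exponents recording the symplectic area $x(w_\ell)$ of the holomorphic sphere over each leg that is absorbed in passing from $\T$ to $Z_{\bar X}$.
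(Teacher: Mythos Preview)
Your proposal is correct and follows exactly the approach the paper indicates: the paper simply states that \cref{tvs} together with \cref{PWc,ZT} imply the corollary, and your argument spells out precisely this derivation, including the sign bookkeeping from $n_{\ell^-}=-n_\ell$ when applying \cref{PWc} on the leg $\ell^-$.
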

So $Z_{\bar X}$ is annihilated by a different action of the small quantum torus Lie algebra $\Tsmall$ using the operators $( t^{x(w_1)(v\wedge n_1) }\W_{v,1^-}+t^{x(w_2)(v\wedge n_2) }\W_{v,2^-}+t^{x(w_3)(v\wedge n_3) }\W_{v,3^-})$. Note, however that these operators are only defined on a subset of $\mathcal H^+_{1^-}\otimes\mathcal H^+_{2^-}\otimes\mathcal H^+_{3^-}$ because of the presence of negative powers of $t$.

Let $\T_{\emptyset}$ indicate the degree $(0,0,0)$ part of $\T$. This consists of the contribution of curves with all components constant. This is readily computable using \cite{FP}.  The contribution of all other curves are determined by the commutation relations from \cref{Wcommutation}, and the vanishing relations of \cref{Wgrading} for $\W_{v,\ell}$, \cref{tvs}.

Use $\T_\ell$ for the projection of $\T$ to $\widehat{\mathcal H^+_{\ell^-}}$, the completion of $\mathcal H^+_{\ell^-}\subset \widehat{\mathcal H^+_{1^-}\otimes\mathcal H^+_{2^-}\otimes\mathcal H^+_{3^-}}$ allowing infinite sums with one term in each degree.

\begin{lemma} \label{1leg}
For $\ell = 1, 2, 3$, we have the following equations.
\begin{align*}
\T_\ell &= \exp\lrb{\sum_k\frac{(-1)^{k}}{k[k]_q} \W_{-kn_{\ell+1},\ell^-}} \T_{\emptyset} \\
\T_\ell &= \exp\lrb{\sum_k\frac{(-1)^{k+1}}{k[k]_q} \W_{kn_{\ell-1},\ell^-}} \T_{\emptyset}
\end{align*}
\end{lemma}

\begin{proof}
For notational ease, we assume without loss of generality that $\ell = 3$, so we aim to show that 
\[
\T_3=\exp\lrb{\sum_k\frac{(-1)^{k}}{k[k]_q} \W_{-kn_1,3^-}} \T_{\emptyset} \ .
\]

\Cref{Wgrading} tells us that the operator $\W_{kn_1,\ell^-}$ has degree $kn_1 \wedge n_\ell$. In particular, $\W_{kn_1,2^-}$ has degree $k$, $\W_{kn_1,3^-}$ has degree $-k$ and $\W_{kn_1,1^-}$ has degree $0$. So we can obtain an element with degree $(0,0,n)$ by applying $\W_{kn_1,1^-}$ to an element of degree $(0,0,n)$, or by applying $\W_{kn_1,3^-}$ to an element of degree $(0,0,n+k)$. Therefore, \cref{tvs} implies that 
\begin{equation} \label{t3e}
\left(\left( \W_{kn_1,1^-}+\W_{kn_1,2^-}+\W_{kn_1,3^-} \right) \T_3\right)_3=\left(\left( \W_{kn_1,1^-}+\W_{kn_1,2^-}+\W_{kn_1,3^-} \right) \T\right)_3=0 \ .
\end{equation}
where the subscript indicates that we project to the part with degree $(0,0,*)$.

Using that $n_1=w_{3^-}$, \cref{W0} and \cref{Wadjoint} allow us to simplify this equation to 
\[
\W_{kn_1,3^-} \T_3= \frac {(-1)^{k+1}}{[k]_q} \T_3 \ .
\]
Using the fact that $\T_3$ is a sum of powers of $\W_{-kn_1,3^-}$ applied to $1\otimes 1\otimes 1$ or $\T_{\emptyset}$, and the commutation relation
\[
[\W_{kn_1,3^-},\W_{-k'n_1,3^-}]=k( n_{3^-}\wedge n_1) \delta_{k,k'}=- k\delta_{k,k'}
\]
then gives that
\[
\T_3= \exp\lrb{\sum_k\frac{(-1)^{k}}{k[k]_q} \W_{-kn_1,3^-}} \T_{\emptyset} \ .
\]

The statement using $\W_{kn_2,3^-}$ is analogous, although when simplifying this operator, we get a different sign because $\W_{kn_1,1^-}(1)=-\W_{-kn_2,2^-}$, obtaining 
\[
\T_3=\exp\lrb{\sum_k\frac{(-1)^{k+1}}{k[k]_q} \W_{kn_2,3^-}} \T_{\emptyset} \ . \qedhere
\] 
\end{proof}

We will now describe how to obtain the $2$-legged and $3$-legged topological vertex by applying operators to the $1$-legged topological vertex. Define the operators $\E_{\ell+1,\ell}:\widehat{\mathcal H^+_{1^-}\otimes\mathcal H^+_{2^-}\otimes\mathcal H^+_{3^-}}\longrightarrow \widehat{\mathcal H^+_{1^-}\otimes\mathcal H^+_{2^-}\otimes\mathcal H^+_{3^-}}$ by 
\[
\E_{\ell+1,\ell}:=\exp\lrb{\sum_k \frac {1}{k} \left( \W_{kn_{\ell+1},(\ell+1)^-}+\W_{kn_{\ell+1},(\ell+2)^-} \right) \W_{-kn_{\ell+1},\ell^-} } \ .
\]
The commutation relation $[\W_{sn_{\ell+1},\ell^-},\W_{-kn_{\ell+1},\ell^-}]=-s\delta_{s,k}$ implies that, for $s>0$, 
\[
\left[\W_{sn_{\ell+1},\ell^-}, \lrb{\sum_{k=1}^{\infty} \frac {1}{k} \left( \W_{kn_{\ell+1},(\ell+1)^-}+\W_{kn_{\ell+1},(\ell+2)^-} \right) \W_{-kn_{\ell+1},\ell^-} }\right]= -\left( \W_{sn_{\ell+1},(\ell+1)^-}+\W_{sn_{\ell+1},(\ell+2)^-} \right) \ ,
\]
which commutes with each operator in the sum defining $\E_{\ell+1,\ell}$. Using this to compute $[\W_{sn_{\ell+1},\ell^-},\E_{\ell+1,\ell}]$, we arrive at the key property
\begin{equation} \label{WE}
[\W_{kn_{\ell+1},\ell^-}, \E_{\ell+1,\ell}] =- \left( \W_{kn_{\ell+1},(\ell+1)^-}+\W_{kn_{\ell+1},(\ell+2)^-} \right) \E_{\ell+1,\ell} \ , \quad \text{ for }k>0 .
\end{equation}
So, if $\W_{kn_{\ell+1},\ell^-}\, \alpha=0$, then 
\[
\left( \W_{kn_{\ell+1},\ell^-}+\W_{kn_{\ell+1},(\ell+1)^-}+\W_{kn_{\ell+1},(\ell+2)^-} \right) \E_{\ell+1,\ell} \, \alpha=0\ .
\]
Similarly, define
\[
\E_{\ell-1,\ell}:=\exp\lrb{\sum_k \frac {1}{k} \left( \W_{-kn_{\ell-1},(\ell+1)^-}+\W_{-kn_{\ell-1},(\ell-1)^-} \right) \W_{kn_{\ell-1},\ell^-} } \ ,
\]
which satisfies the key property
\begin{equation} \label{WE2}
[\W_{-kn_{\ell-1},\ell^-},\E_{\ell-1,\ell}]= - \left( \W_{-kn_{\ell-1},(\ell+1)^-}+\W_{-kn_{\ell-1},(\ell-1)^-} \right) \E_{\ell-1,\ell} \ , \quad \text{ for }k>0 .
\end{equation}

In the following lemmas, we prove that 
\[
\T = \E_{1,3} \, \E_{3,2} \T_1 = \E_{1,2} \, \E_{2,3} \T_1 \ .
\]

Use the notation $\T_{\ell,\ell+1}$ for the projection of $\T$ to $\widehat{\mathcal H^+_{\ell^-}\otimes\mathcal H^+_{(\ell+1)^-}}$.

\begin{lemma} \label{2leg}
\[
\T_{\ell,\ell+1} = \E_{\ell+2,\ell+1} \T_\ell = \E_{\ell-1,\ell} \T_{\ell+1}
\]
\end{lemma}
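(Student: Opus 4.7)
The plan is to characterise both $\T_{\ell,\ell+1}$ and $\E_{\ell+2,\ell+1}\T_\ell$ by the same system of constraints with a common initial condition, then invoke uniqueness, mirroring the structure of the proof of \cref{1leg}. The analogous identification $\T_{\ell,\ell+1}=\E_{\ell-1,\ell}\T_{\ell+1}$ follows by the parallel argument using \cref{WE2} in place of \cref{WE}.

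First, apply \cref{tvs} with $v=kn_{\ell+2}$ for $k\in\mathbb Z\setminus\{0\}$ and project onto $\widehat{\mathcal H^+_{\ell^-}\otimes\mathcal H^+_{(\ell+1)^-}}$ by extracting the coefficient of $1$ in the $(\ell+2)^-$ factor. These are precisely the vectors $v$ for which $\W_{v,(\ell+2)^-}$ has degree $0$ in the $(\ell+2)^-$ factor, so it preserves the degree-$0$ subspace $\N\cdot 1\subset\mathcal H^+_{(\ell+2)^-}$ and acts on $1$ by a scalar $c_k$ computable from \cref{W01,W0,Wadjoint}. Because $\W_{kn_{\ell+2},(\ell+2)^-}$ preserves factor-$(\ell+2)^-$ degree, components of $\T$ of positive degree in this factor contribute nothing to the projection, yielding
\[
\lrb{\W_{kn_{\ell+2},\ell^-}+\W_{kn_{\ell+2},(\ell+1)^-}+c_k}\T_{\ell,\ell+1}=0.
\]

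Next, verify that $\E_{\ell+2,\ell+1}\T_\ell$ satisfies the same relation. This element lies in $\widehat{\mathcal H^+_{\ell^-}\otimes\mathcal H^+_{(\ell+1)^-}}$ because every $\W_{kn_{\ell+2},(\ell+2)^-}$ in the exponential defining $\E_{\ell+2,\ell+1}$ acts on $1\in\mathcal H^+_{(\ell+2)^-}$ by the scalar $c_k$. The commutation relation \cref{WE}, with $\ell$ relabelled to $\ell+1$, reads
\[
[\W_{kn_{\ell+2},(\ell+1)^-},\E_{\ell+2,\ell+1}]=-\lrb{\W_{kn_{\ell+2},(\ell+2)^-}+\W_{kn_{\ell+2},\ell^-}}\E_{\ell+2,\ell+1}.
\]
Applied to $\T_\ell$, the term $\E_{\ell+2,\ell+1}\W_{kn_{\ell+2},(\ell+1)^-}\T_\ell$ vanishes for $k>0$, since $\W_{kn_{\ell+2},(\ell+1)^-}$ strictly lowers $(\ell+1)^-$ degree and $\T_\ell$ has trivial $(\ell+1)^-$ factor; rearranging gives the required relation.

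The common initial condition is that the further projection to $\widehat{\mathcal H^+_{\ell^-}}$ (coefficient of $1$ in the $(\ell+1)^-$ factor) returns $\T_\ell$: for $\T_{\ell,\ell+1}$ by definition, and for $\E_{\ell+2,\ell+1}\T_\ell$ because the defining exponential contains only operators $\W_{-kn_{\ell+2},(\ell+1)^-}$ of strictly positive $(\ell+1)^-$ degree. For uniqueness I would combine the relations of step one for all $k\in\mathbb Z\setminus\{0\}$ with the commutation relations of \cref{Wcommutation}: at each positive $(\ell+1)^-$ degree, the equations coming from $v=-kn_{\ell+2}$ (whose $\W_{-kn_{\ell+2},(\ell+1)^-}$ summand strictly raises $(\ell+1)^-$ degree) together with the additional equations from $v$ satisfying $v\wedge n_{\ell+2}>0$ (for which the scalar $c$ vanishes by a further Wvanishing argument on the $(\ell+2)^-$ factor) generate enough data to pin down the unknown degree-by-degree from $\T_\ell$. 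I expect this uniqueness step to be the principal obstacle, since the two-factor recursion is more delicate than the single-factor recursion in \cref{1leg}; the highest weight structure on $\mathcal H^+_{(\ell+1)^-}$ together with \cref{Wcommutation} is what ultimately reduces it to a form amenable to the 1leg-style argument.
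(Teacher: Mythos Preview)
Your proposal is correct and follows essentially the same route as the paper: derive the constraint equation for $\T_{\ell,\ell+1}$ by projecting \cref{tvs} with $v=kn_{\ell+2}$ to $(\ell+2)^-$-degree zero, verify via \cref{WE} that $\E_{\ell+2,\ell+1}\T_\ell$ satisfies the same constraint, and match the initial conditions. The only cosmetic difference is that the paper keeps $\W_{kn_{\ell+2},(\ell+2)^-}$ as an operator rather than replacing it by its eigenvalue $c_k$, which is equivalent since both sides have trivial $(\ell+2)^-$ factor.

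However, you are overcomplicating the uniqueness step. You do not need the equations for negative $k$, nor the auxiliary equations coming from other $v$ with $v\wedge n_{\ell+2}>0$. The equations for $k>0$ alone suffice, by a simple induction on $(\ell+1)^-$-degree. The point is that the operators $\W_{kn_{\ell+2},(\ell+1)^-}$ for $k>0$ form a complete set of annihilation operators for a Heisenberg subalgebra acting on $\mathcal H^+_{(\ell+1)^-}$ (the commutator $[\W_{kn_{\ell+2},(\ell+1)^-},\W_{-k'n_{\ell+2},(\ell+1)^-}]=k\delta_{k,k'}$ follows from \cref{Wcommutation}), so their joint kernel is the vacuum line. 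If $X$ is the difference of two solutions, then $X$ has vanishing $(\ell+1)^-$-degree-$0$ part; assuming inductively that $X$ vanishes in $(\ell+1)^-$-degree $<n$, the constraint equation in $(\ell+1)^-$-degree $n-k$ reads $\W_{kn_{\ell+2},(\ell+1)^-}X_n=-(\W_{kn_{\ell+2},\ell^-}+c_k)X_{n-k}=0$, so $X_n$ is annihilated by all these operators and hence lies in $(\ell+1)^-$-degree $0$, forcing $X_n=0$. The paper itself does not spell out this step, but this is the argument underlying its assertion that the solution with the given initial condition is $\E_{2,1}\T_3$.
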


\begin{proof}
For notational ease, we assume without loss of generality that $\ell = 3$. So we must show that
\[
\T_{3,1}=\exp\lrb{\sum_k \frac {1}{k}(\W_{kn_{2},3^-}+\W_{kn_{2},2^-})\W_{-kn_{2},1^-} }\T_3\ .
\]

Note that $\W_{kn_2,1^-}$ has degree $-k$, and therefore annihilates $\T_3$. We also have that $\W_{kn_2,2^-}$ has degree $0$ and $\W_{kn_2,3^-}$ has degree $k$. All these operators commute with projection to degree $(*,0,*)$, so \cref{tvs} implies the equation
\[
(\W_{kn_{2},1^-}+\W_{kn_{2},2^-}+ \W_{kn_{2},3^-} ) \T_{3,1}=0 \ .
\]
As $\W_{kn_2,1^-}$ annihilates $\T_3$, \cref{WE} implies that $\T_{3,1}=\E_{2,1} \T_3$ solves this equation with the initial condition that its projection to $\widehat{\mathcal H^+_{3^-}}$ is $\T_3$.

Similarly, $\W_{-kn_2,3^-}$ annihilates $\T_1$, and degree considerations and \cref{tvs} imply the equation
\[
\left( \W_{-kn_{2},1^-}+\W_{-kn_{2},2^-}+ \W_{-kn_{2},3^-} \right) \T_{3,1}=0 \ .
\]
So $\T_{3,1}=\E_{2,3} \T_1$ solves this equation, with the initial condition that its projection to $\widehat{\mathcal H^+_{1^-}}$ is $\T_1$.
\end{proof}

Using $\T_{2,3}=\E_{1,2}\exp\lrb{\sum_k \frac {(-1)^k}{k[k]_q} \W_{-kn_{1},3^-} }\T_{\emptyset}$, we get an explicit expression for the two-legged topological vertex as a single exponential, because $\E_{1,2}$ and $\W_{-kn_1,3^-}$ commute.

\begin{equation}
\begin{split}\T_{2,3} &=\exp\lrb{\sum_k \frac {1}{k} \left( \W_{-kn_{1},3^-}+\frac {(-1)^{k+1}}{[k]_q} \right) \W_{kn_{1},2^-} }\exp\lrb{\sum_k \frac {(-1)^k}{k[k]_q} \W_{-kn_{1},3^-} }\T_{\emptyset} 
\\&= \exp \lrb{\sum_k \frac{(-1)^{k}}{k[k]_q} \left( \W_{-kn_1,3^-} - \W_{kn_1,2^-} \right) + \frac 1k \W_{-kn_1,3^-} \W_{kn_1,2^-}} \T_{\emptyset} \end{split}
\end{equation}

\begin{lemma} \label{3leg}
\[
\T = \E_{\ell-1,\ell} \T_{\ell+1,\ell+2}=\E_{\ell+1,\ell} \T_{\ell+1,\ell+2}
\]
\end{lemma}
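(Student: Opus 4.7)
The plan is to mimic the proof of \cref{2leg}: verify that each of $\E_{\ell+1,\ell}\T_{\ell+1,\ell+2}$ and $\E_{\ell-1,\ell}\T_{\ell+1,\ell+2}$ satisfies the same characterizing equation and initial condition as $\T$, and conclude by uniqueness. By the cyclic symmetry of the toric graph it suffices to take $\ell=1$, so I would prove $\T=\E_{2,1}\T_{2,3}$ and $\T=\E_{3,1}\T_{2,3}$. The two arguments are parallel; I describe the first in detail, noting that the second uses \cref{WE2} in place of \cref{WE}.

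Set $A:=\E_{2,1}\T_{2,3}$, embedded in $\widehat{\mathcal H^+_{1^-}\otimes\mathcal H^+_{2^-}\otimes\mathcal H^+_{3^-}}$ by writing $\T_{2,3}$ as $1\otimes\T_{2,3}$. First I would check the initial condition: every non-trivial term of $\E_{2,1}$ contains at least one factor $\W_{-kn_2,1^-}$ with $k\geq 1$, an operator of strictly positive degree on factor $1^-$, so the factor-$1^-$ degree-zero part of $A$ is precisely $\T_{2,3}$, which matches the corresponding projection of $\T$. Next I would check the tvs-type equation for $v=kn_2$. Since $kn_2\wedge n_{1^-}=k$, \cref{Wvanishing} gives $\W_{kn_2,1^-}(1)=0$ on factor $1^-$, and hence $\W_{kn_2,1^-}\T_{2,3}=0$ in the ambient triple tensor product. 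Combining this with the key property \cref{WE} yields
\[
(\W_{kn_2,1^-}+\W_{kn_2,2^-}+\W_{kn_2,3^-})\,A \;=\; \E_{2,1}\,\W_{kn_2,1^-}\T_{2,3}\;=\;0 \ ,
\]
and by \cref{tvs} the same equation holds for $\T$.

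Finally I would close by uniqueness, along the lines implicit in the proof of \cref{2leg}. Suppose $D$ has vanishing $(0,\ast,\ast)$ projection and is annihilated by $\W_{kn_2,1^-}+\W_{kn_2,2^-}+\W_{kn_2,3^-}$ for every $k\geq 1$; let $m\geq 1$ be the smallest factor-$1^-$ degree at which $D$ is nonzero. Reading the equation in output factor-$1^-$ degree $m-k$ for $1\leq k\leq m$, the middle and right terms involve only $D_{m-k}$, which vanishes by minimality, so $D_m$ is killed by every $\W_{kn_2,1^-}$ with $1\leq k\leq m$. The commutation relation $[\W_{kn_2,1^-},\W_{-kn_2,1^-}]=k$ from \cref{Wcommutation} shows that $\{\W_{kn_2,1^-}\}_{k\neq 0}$ generates a Heisenberg subalgebra for which $\mathcal H^+_{1^-}$ is a Fock representation with vacuum $1$; in particular the joint kernel of $\{\W_{kn_2,1^-}\}_{k\geq 1}$ on the positive-degree subspace is trivial, forcing $D_m=0$ and contradicting the choice of $m$. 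This Heisenberg faithfulness step is the main obstacle; once granted, the identity $\T=\E_{2,1}\T_{2,3}$ follows, and the parallel argument using \cref{WE2} and the vanishing $\W_{-kn_3,1^-}(1)=0$ (which holds because $-kn_3\wedge n_{1^-}=k$) gives $\T=\E_{3,1}\T_{2,3}$.
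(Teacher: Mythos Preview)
Your approach is essentially the same as the paper's: verify that $\E_{\ell\pm 1,\ell}\T_{\ell+1,\ell+2}$ satisfies the same annihilation equation as $\T$ (via \cref{WE} or \cref{WE2}) together with the correct projection to $\widehat{\mathcal H^+_{(\ell+1)^-}\otimes\mathcal H^+_{(\ell+2)^-}}$, and conclude by uniqueness. The paper's proof is terse and leaves the uniqueness step entirely implicit; you go further by spelling it out.

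The ``Heisenberg faithfulness'' step you flag is genuine but can be closed using \cref{framing change}. Since $n_2\wedge n_{1^-}=1$, the vector $n_2$ is an admissible framing for $\mathcal H^+_{1^-}$; the framing-change isomorphism $F_{\bar X}$ intertwines all $\W_{v,1^-}$ and sends the vacuum to the vacuum, so after changing framing to $w_{1^-}=n_2$ the operators $\W_{kn_2,1^-}$ become the standard annihilation operators $-\an_{\alpha^+_{k,1^-}}$ of \cref{W+}, whose joint kernel on positive degree is visibly trivial. This justifies your inductive step and completes the argument.
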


\begin{proof}
For notational ease, we assume without loss of generality that $\ell = 1$. As $\W_{-kn_3,1^-}$ has degree $-k$, it annihilates $\T_{2,3}$. Therefore, \cref{WE2} implies that $\T = \E_{3,1} \T_{2,3}$ satisfies the required symmetry equation
\[
(\W_{-kn_3,1^-} + \W_{-kn_3,2^-} + \W_{-kn_3,3^-}) \T = 0 \ ,
\]
and satisfies the initial condition that its projection to $\widehat{\mathcal H^+_{2^-} \otimes \mathcal H^+_{3^-}}$ is $\T_{2,3}$.

Similarly, $\W_{kn_2,1^-}$ has degree $-k$, so annihilates $\T_{2,3}$, and it follows from \cref{WE} that
\[
\T = \E_{2,1} \T_{2,3} \ . \qedhere
\]
\end{proof}

Using \cref{PWc,ZT}, we can include the symplectic area of curves in our calculations and determine $Z_{\bar X}$. Recall that the symplectic energy of a curve with contact data $v$ is given by $x(v)$ for $x$ some positive function on $\mathfrak t^2_{\mathbb Z}\setminus \{0\}$.

Define
\[
\E_{\ell+1,\ell}(x) := \exp \left( \sum_{k=1}^\infty \frac {1}{k} \left( \W_{kn_{\ell+1},(\ell+1)^-}+t^{kx(w_{\ell+2})} \W_{kn_{\ell+1},(\ell+2)^-} \right) t^{kx(w_\ell)} \W_{-kn_{\ell+1},\ell^-} \right) \ ,
\]
so \cref{PWc} implies that
\[
\Prop_{x(w_1),1^-} \Prop_{x(w_2),2^-} \Prop_{x(w_3),3^-} \E_{\ell+1,\ell} = \E_{\ell+1,\ell}(x)\Prop_{x(w_1),1^-} \Prop_{x(w_2),2^-} \Prop_{x(w_3),3^-} \ .
\]
Similarly, define
\[
\E_{\ell-1,\ell}(x) := \exp \lrb{\sum_{k=1}^\infty \frac {1}{k} \left( t^{kx(w_{\ell+1})}\W_{-kn_{\ell-1},(\ell+1)^-}+\W_{-kn_{\ell-1},(\ell-1)^-} \right) t^{kx(w_\ell)}\W_{kn_{\ell-1},\ell^-} } \ ,
\]
which satisfies the analogous equation
\[
\Prop_{x(w_1),1^-} \Prop_{x(w_2),2^-} \Prop_{x(w_3),3^-}\E_{l-1,\ell} = \E_{l-1,\ell}(x)\Prop_{x(w_1),1^-} \Prop_{x(w_2),2^-} \Prop_{x(w_3),3^-} \ .
\]
Note that because of the positive powers of $t$, the above operators $\E_{\ell \pm 1, \ell}(x)$ define bounded operators on $\mathcal H^+_{1^-}\otimes\mathcal H^+_{2^-}\otimes\mathcal H^+_{3-}$.

Moreover, \cref{1leg,2leg,3leg} together with \cref{ZT} have the following immediate consequence.

\begin{cor} \label{tvcalc}

\[
Z_{\bar X}=\E_{1,2}(x) \, \E_{2,3}(x) \Prop_{x(w_1),1^{-}}\T_1
 = \E_{1,2}(x) \, \E_{2,3}(x)\exp\lrb{\sum_k\frac{(-t^{x(w_1)})^{k}}{k[k]_q} \W_{-kn_2,1^-}} \T_{\emptyset}
\]
\end{cor}

\bibliographystyle{plain}
\bibliography{tropological-vertex}

\end{document}